\numberwithin{equation}{section}
\numberwithin{equation}{subsection}
\theoremstyle{plain}
\newtheorem{theorem}[equation]{Theorem}
\newtheorem{lemma}[equation]{Lemma}
\newtheorem{proposition}[equation]{Proposition}
\newtheorem{corollary}[equation]{Corollary}
\newtheorem{question}[equation]{Question}
\theoremstyle{definition}
\newtheorem{example}[equation]{Example}
\newtheorem{remark}[equation]{Remark}
\newtheorem{definition}[equation]{Definition}
\newtheorem{problem}[equation]{Problem}
\newcommand{\bZ}{{\mathbb Z}}
\newcommand{\calC}{{\mathcal C}}
\newcommand{\cJ}{{\mathcal J}}
\newcommand{\cV}{{\mathcal V}}
\newcommand{\cS}{{\mathcal S}}\newcommand{\calS}{{\mathcal S}}
\newcommand{\cP}{{\mathcal P}}
\newcommand{\cI}{{\mathcal I}}
\newcommand{\cO}{{\mathcal O}}\newcommand{\calO}{{\mathcal O}}
\newcommand{\cF}{{\mathcal F}}
\newcommand{\cL}{{\mathcal L}}
\newcommand{\calL}{{\mathcal L}}
\newcommand{\calQ}{{\mathcal Q}}
\newcommand{\tX}{\widetilde{X}}
\newcommand{\C}{{\calc}}
\newcommand{\rank}{{\rm rank}\, }
\newcommand{\ix}{\index}
\def\blfootnote{\xdef\@thefnmark{}\@footnotetext}
\newcommand{\co}{\cO}
\newcommand{\chic}{\mathfrak{r}}
\newcommand{\bt}{{\bf t}}
\newcommand{\bH}{{\mathbb H}}
\newcommand{\ocalj}{\overline{{\mathcal V}}}
\newcommand{\cali}{{\mathcal I}}
\newcommand{\calj}{{\mathcal V}}
\newcommand{\setQ}{\mathbb{Q}}
\newcommand{\setZ}{\mathbb{Z}}
\newcommand{\ringd}{\setZ[[\bt^{\pm 1/d}]]}
\newcommand{\hh}{\mathfrak{h}}
\newcommand{\pp}{\mathfrak{p}}
\newcommand{\RR}{\mathfrak{R}}
\newcommand{\calv}{\mathcal{V}}
\newcommand{\sw}{{\mathfrak{sw}}}
\newcommand{\Z}{\mathbb{Z}}
\newcommand{\Q}{\mathbb{Q}}
\newcommand{\R}{\mathbb{R}}
\gdef\SetFigFontNFSS#1#2#3#4#5{%
  \reset@font\fontsize{#1}{#2pt}%
  \fontfamily{#3}\fontseries{#4}\fontshape{#5}%
  \selectfont}%
\newcommand{\calt}{{\mathcal T}}
\def\C{\mathbb C}
\def\Q{\mathbb Q}
\def\R{\mathbb R}
\def\bH{\mathbb H}
\def\Z{\mathbb Z}
\newcommand{\cale}{{\mathcal E}}
\author{Tam\'as \'Agoston}
\address{Alfr\'ed R\'enyi Institute of Mathematics,
Re\'altanoda utca 13-15, H-1053, Budapest, Hungary }
\email{agoston.tamas@renyi.hu}
\author{Andr\'as N\'emethi}
\address{Alfr\'ed R\'enyi Institute of Mathematics,
Re\'altanoda utca 13-15, H-1053, Budapest, Hungary \newline
 \hspace*{4mm} ELTE - University of Budapest, Dept. of Geometry, Budapest, Hungary \newline \hspace*{4mm}
BCAM - Basque Center for Applied Math.,
Mazarredo, 14 E48009 Bilbao, Basque Country – Spain}
\email{nemethi.andras@renyi.hu }
\title{Analytic lattice cohomology of surface singularities, II\\
(the equivariant case)
}
\begin{document}

\keywords{normal surface singularity,
resolution  graph, rational homology sphere, Hilbert series,
Laufer duality, lattice cohomology, cohomology of line bundles, graded roots, deformation of singularities}
\subjclass[2010]{Primary. 32S05, 32S25, 32S50, 57M27
Secondary. 14Bxx, 14J80}
\thanks{The  authors are partially supported by NKFIH Grant ``\'Elvonal (Frontier)'' KKP 126683.}

\begin{abstract}
We construct the equivariant analytic lattice cohomology associated with the analytic type of a complex normal surface singularity
whenever the link is a rational homology sphere. It is the categorification of the equivariant geometric genus of the  germ.
This is the analytic analogue of the topological lattice cohomology, associated with the link of the germ,
and  indexed by the spin$^c$--structures of the link (which is a categorification of the Seiberg--Witten invariant and
conjecturally it is isomorphic with the Heegaard Floer cohomology).

\end{abstract}

\maketitle


\date{}

\pagestyle{myheadings} \markboth{{\normalsize  T. \'Agoston , A. N\'emethi}} {{\normalsize Analytic lattice cohomology }}


\section{Introduction}\label{s:intr}

\subsection{} Let us fix a complex normal surface singularity $(X,o)$ whose link is a rational homology
sphere. In \cite{NOSz,NGr,Nlattice} the (topological) lattice cohomologies and graded roots were introduced (using the combinatorics of the dual graph of any good resolution).
 Let us recall some of its main properties.

 It  has a  rather different structure than any cohomology
theory associated with  analytic spaces by complex analytic or algebraic geometry.
It  has several  gradings: first of all, it has a direct sum decomposition according to the spin$^c$--structures $\sigma$
of $M$. (Recall that ${\rm Spin}^c(M)$  is an $H_1(M,\Z)$ torsor, hence the cardinality
of ${\rm Spin}^c(M)$ is the order of $H_1(M,\Z)$.)
 Then each summand $\bH^*_{top}(M,\sigma)$ has a decomposition
$\oplus _{q\geq 0}\bH^q(M,\sigma)$, where each $\bH^q_{top}(M,\sigma)$ is a $\Z$--graded $\Z[U]$--module.
Probably the presence of this  additional $U$--action is the most outstanding property compared with the usual cohomology theories.

Conjecturally (see \cite{Nlattice})
$\bH^*_{top}(M)$  is isomorphic to the Heegaard Floer cohomology $HF^+$ of Ozsv\'ath and Szab\'o  (which is defined for any 3--manifold),
for  $HF$--theory see their
 long list of article, e.g.
\cite{OSz,OSz7}. This conjecture was verified for several families of plumbed 3--manifolds (associated with
negative definite connected graphs),  cf. \cite{NOSz,OSSz3},
but the general case is still open.
(In fact,  the Heegaard Floer theory is isomorphic with several other theories:
with the Monopole Floer Homology of Kronheimer  and Mrowka, or with the Embedded Contact Homology of Hutchings.
They are based on different geometrical  aspects of the 3--manifold $M$.)
$\bH^*_{top}$  is the categorification of the Seiberg--Witten invariant (similarly as $HF^+$ is).
(This means that the Euler characteristic of $\bH^*_{top}$ is the Seiberg--Witten invariant.)
For several properties and application in singularity theory see \cite{NOSz,NSurgd,NGr,Nexseq,NeLO}. For its connection with the classification
projective rational plane cuspidal curves (via superisolated surface  singularities) see \cite{NSurgd,BLMN2,BodNem,BodNem2,BCG,BL1}.
 It  provides sharp topological bounds for certain sheaf cohomologies (e.g. for $p_g$), see e.g.  \cite{NSig,NSigNN}.
An improvement of $\bH^0_{top}$ is the set of graded roots parametrized by the spin$^c$--structures of $M$ \cite{NOSz,NGr}
 (they have no analogues for general arbitrary 3--manifolds). The  graded root is a special
tree with  $\Z$--graded vertices, it  provides
a very visual presentation of  $\bH^0_{top}$ (e.g., the $U$--action is coded in the edges).
 Hence, in particular it visualizes   $HF^+$ too,
when the  Heegaard Floer homology is known to be isomorphic to $\bH^0_{top}$ (see e.g. \cite{NOSz}). In such cases
the use of   graded roots  is significantly more convenient than any other method, see e.g.
 \cite{DM,K1,K2,K3}.

 \subsection{} In a series of articles we wish to develop the theory of analytic lattice
 cohomologies: they are associated with the analytic type of isolated singularities of
 any dimension, see
 \cite{AgNe1,AgNeCurves,AgNeHigh}.

 In \cite{AgNe1} we considered the case of a normal surface singularities,  when we constructed the analytic lattice cohomology associated with the
 canonical spin$^c$--structure.
  The case of  other spin$^c$--structures (under the assumption that the link is a rational homology sphere)
  is  treated in the present note.
For this general part,
 we need to generalize  the constructions of \cite{AgNe1}
 to the level  of the universal abelian covering of $(X,o)$ and we also
need to use  several technical
parts regarding  `natural line bundles'  of a
resolution. This motivates that this equivariant discussion is separated in the present note.

The analytic lattice cohomology $\bH^*_{an}(X,o)$ has a very similar structure as the topological one. It
decomposes into a direct sum, where the summands are  indexed
by the elements of $H_1(M,\Z)$ (hence, equivalently, by ${\rm Spin}^c(M)$),
and each summand is a double graded $\Z[U]$--module.
The cohomology theory is the categorification of equivariant geometric genus.
 We also show that it admits a graded $\Z[U]$--module morphism $\bH^*_{an}(X,o)\to \bH^*_{top}(M)$.
 We also present a reduction theorem similar to the non-equivariant case (and
 comparable with the topological case \cite{LN1}).


 \subsection{} The structure of the article is the following.

 In section 2 we recall the general definition of lattice cohomology (and graded root)
 associated with a weight  function. For this construction we need a free module $\Z^s$ (with fixed basis)
 and a weight function $w:\Z^s\to\Z$. In both topological and analytical cases the lattice $\Z^s$ is given  by $H_2(\tX,\Z)$ of a good resolution $\tX\to X$. However, in the topological case, the weight function is determined topologically, and in the analytic case it is analytic: it is the difference  of the coefficient of the Hilbert function and the dimension of a  sheaf cohomology.

 In section 3 we prove  combinatorial theorems regarding the Euler characteristic
 of a lattice cohomology associated with a weight function with certain `nice' properties.

 In section 4 we review properties of the topological lattice cohomology.

 In section 5 we collected certain needed terminologies, analytic results and constructions
 (universal abelian covering, equivariant geometric genus, natural line bundles, equivariant
 multivariable Hilbert series, vanishing and duality theorems, and
cohomological cycle associated with a line bundle).

Section 6 contains the definition of the analytic lattice cohomology using a resolution. Here we also prove its  independence  of the choice of the resolution and we determine its Euler characteristic.

In section 7 we construct a graded $\Z[U]$--module morphism $\mathfrak{H}^*_h:\bH^*  _{an,h}(X,o)\to
\bH^*_{top,h}(M)$.

In section 8 we review
 the topological reduction theorem (reduction to a smaller rank  lattice associated with the set of
`bad' vertices). Section 9 contains the analytic version of this.

\section{Preliminaries. Basic properties of lattice cohomology}\label{s:Prem1}

This is a short review of the lattice cohomology and graded roots associated with a weight function.
Though this material was presented in many different articles, still is worth to
recall the notations and basic results in order to make the next sections readable.
This section is rather similar  with section 2 of \cite{AgNe1}.

\subsection{The lattice cohomology associated with  a weight function}\label{ss:latweight} \cite{NOSz,Nlattice}

\bekezdes {\bf Weight function.}
 We consider a free $\Z$-module, with a fixed basis
$\{E_v\}_{v\in\calv}$, denoted by $\Z^s$, $s:=|\calv|$.
Additionally, we consider a {\it weigh function} $w_0:\Z^s\to \Z$ with the property
\begin{equation}\label{9weight}
\mbox{for any integer $n\in\Z$, the set $w_0^{-1}(\,(-\infty,n]\,)$
is finite.}\end{equation}

%

\bekezdes\label{9complex} {\bf The weighted cubes.}
The space
$\Z^s\otimes \R$ has a natural cellular decomposition into cubes. The
set of zero-dimensional cubes is provided  by the lattice points
$\Z^s$. Any $l\in \Z^s$ and subset $I\subset \calv$ of
cardinality $q$  defines a $q$-dimensional cube $\square_q=(l, I)$, which has its
vertices in the lattice points $(l+\sum_{v\in I'}E_v)_{I'}$, where
$I'$ runs over all subsets of $I$.
 The set of $q$-dimensional cubes  is denoted by $\calQ_q$ ($0\leq q\leq s$).

Using $w_0$ we define
$w_q:\calQ_q\to \Z$  ($0\leq q\leq s$) by
$w_q(\square_q):=\max\{w_0(l)\,:\, \mbox{$l$ is a vertex of $\square_q$}\}$.

For each $n\in \Z$ we
define $S_n=S_n(w)\subset \R^s$ as the union of all
the cubes $\square_q$ (of any dimension) with $w(\square_q)\leq
n$. Clearly, $S_n=\emptyset$, whenever $n<m_w:=\min\{w_0\}$. For any  $q\geq 0$, set
$$\bH^q(\R^s,w):=\oplus_{n\geq m_w}\, H^q(S_n,\Z)\ \ \mbox{and}\ \
\bH^q_{red}(\R^s,w):=\oplus_{n\geq m_w}\, \widetilde{H}^q(S_n,\Z).$$
Then $\bH^q$ is $\Z$ (in fact, $2\Z$)-graded, the
$2n$-homogeneous elements $\bH^q_{2n}$ consist of  $H^q(S_n,\Z)$.
Also, $\bH^q$ is a $\Z[U]$-module; the $U$-action is given by
the restriction map $r_{n+1}:H^q(S_{n+1},\Z)\to H^q(S_n,\Z)$.
Namely,  $U*(\alpha_n)_n=(r_{n+1}\alpha_{n+1})_n$. The same is true for $\bH^*_{red}$.
 Moreover, for
$q=0$, the fixed base-point $l_w\in S_n$ provides an augmentation
(splitting)
 $H^0(S_n,\Z)=
\Z\oplus \widetilde{H}^0(S_n,\Z)$, hence an augmentation of the graded
$\Z[U]$-modules (where
$\calt_{2m}^+= \Z\langle U^{-m}, U^{-m-1},\ldots\rangle$ as a $\Z$-module with its natural $U$--action)
$$\bH^0\simeq\calt^+_{2m_w}\oplus \bH^0_{red}=(\oplus_{n\geq m_w}\Z)\oplus (
\oplus_{n\geq m_w}\widetilde{H}^0(S_n,\Z))\ \ \mbox{and} \ \
\bH^*\simeq \calt^+_{2m_w}\oplus \bH^*_{red}.$$

Though
$\bH^*_{red}(\R^s,w)$ has finite $\Z$-rank in any fixed
homogeneous degree, in general,  without certain additional properties of $w_0$, it is not
finitely generated over $\Z$, in fact, not even over $\Z[U]$.

\bekezdes\label{9SSP} {\bf Restrictions.} Assume that $T\subset \R^s$ is a subspace
of $\R^s$ consisting of a union of some cubes (from $\calQ_*$). For any $q\geq 0$ define $\bH^q(T,w)$ as
$\oplus_{n\geq\min{w_0|T}} H^q(S_n\cap T,\Z)$. It has a natural graded $\Z[U]$-module
structure.  The restriction map induces a natural graded
$\Z[U]$-module homogeneous homomorphism
$$r^*:\bH^*(\R^s,w)\to \bH^*(T,w) \ \ \ \mbox{(of degree zero)}.$$
In our applications to follow, $T$ (besides the trivial $T=\R^s$ case) will be one of the following:
%
(i)  the first quadrant $(\R_{\geq o})^s$,
(ii) the rectangle $[0,c]=\{x\in \R^s\,:\, 0\leq x\leq c\}$ for some lattice point $c\geq 0$, or
(iii)  a path of composed edges in the lattice, cf. \ref{9PCl}.

\bekezdes \label{9F} {\bf The `Euler characteristic' of $\bH^*$.}
Fix $T$ as in  \ref{9SSP} and we will assume that each $\bH^*_{red}(T,w)$ has finite $\Z$--rank.
%
The Euler characteristic of $\bH^*(T,w)$ is defined as
$$eu(\bH^*(T,w)):=-\min\{w(l)\,:\, l\in T\cap \Z^s\} +
\sum_q(-1)^q\rank_\Z(\bH^q_{red}(T,w)).$$

\begin{lemma}\cite{NJEMS}\label{bek:LCSW} If $T=[0,c]$ for a lattice point $c\geq 0$, then
\begin{equation}\label{eq:Ecal}
 \sum_{\square_q\subset T} (-1)^{q+1}w_k(\square_q)=eu(\bH^*(T,w)).\end{equation}
 \end{lemma}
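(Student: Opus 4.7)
The plan is to compute both sides of the claimed identity by expanding them in terms of cubes and summing over the filtration $\{S_n\cap T\}_n$.

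First I would unpack the right-hand side. Since $T=[0,c]$ is contractible, $\chi(T)=1$, and for each $n$ the space $S_n\cap T$ is a finite CW-subcomplex of $T$, so its Euler characteristic equals the alternating cube count
\[
\chi(S_n\cap T)=\sum_{\square_q\subset T,\,w_q(\square_q)\le n}(-1)^{q}.
\]
Because $T$ is finite, $S_n\cap T=T$ for all $n$ sufficiently large, so $\chi(S_n\cap T)=1$ eventually, and in particular $\mathbb{H}^*_{red}(T,w)$ has finite total $\mathbb{Z}$-rank. Summing the reduced Betti numbers level by level and using $\widetilde{\chi}(X)=\chi(X)-1$ for $X\neq\emptyset$,
\[
\sum_q(-1)^q\operatorname{rank}_{\mathbb{Z}}\mathbb{H}^q_{red}(T,w)
=\sum_{n\ge m_w}\bigl(\chi(S_n\cap T)-1\bigr),
\]
a finite sum.

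Next I would rewrite each term using the contractibility of $T$, namely $1=\chi(T)=\sum_{\square_q\subset T}(-1)^q$, to obtain
\[
\chi(S_n\cap T)-1=-\sum_{\square_q\subset T,\,w_q(\square_q)>n}(-1)^{q}.
\]
Interchanging the two summations gives
\[
\sum_{n\ge m_w}\bigl(\chi(S_n\cap T)-1\bigr)
=-\sum_{\square_q\subset T}(-1)^{q}\,\bigl|\{n\ge m_w:\,w_q(\square_q)>n\}\bigr|
=-\sum_{\square_q\subset T}(-1)^{q}\bigl(w_q(\square_q)-m_w\bigr),
\]
since $w_q(\square_q)\ge m_w$ for every cube in $T$.

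Finally I would combine this with the defining formula for $eu$:
\[
eu(\mathbb{H}^*(T,w))=-m_w-\sum_{\square_q\subset T}(-1)^{q}\bigl(w_q(\square_q)-m_w\bigr)
=-m_w+m_w\cdot\chi(T)+\sum_{\square_q\subset T}(-1)^{q+1}w_q(\square_q),
\]
and the first two terms cancel, leaving exactly the left-hand side of \eqref{eq:Ecal}.

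There is no real obstacle here; the only care needed is ensuring that all sums over $n$ are finite (which follows from $T$ being bounded and $w_0$ integer-valued) so that the interchange of summation is unconditional, and confirming that the CW-structure on $S_n\cap T$ induced by the cube decomposition computes its topological Euler characteristic. Both points are immediate in the setting of Subsection~\ref{ss:latweight}.
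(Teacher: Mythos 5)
Your proof is correct, and it is essentially the standard argument for this identity (the one given in [NJEMS]): compute $\chi(S_n\cap T)$ cube by cube, sum the reduced Euler characteristics over the levels $n\geq m_w$, interchange the two summations so that each cube $\square_q$ contributes $w_q(\square_q)-m_w$, and cancel the $m_w$ terms using $\chi(T)=1$. All the finiteness and rearrangement issues are handled correctly since $T=[0,c]$ is a finite complex and $S_n\cap T=T$ for $n\gg 0$.
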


\subsection{Path lattice cohomology}\label{9PCl}\cite{Nlattice}

\bekezdes \label{bek:pathlatticecoh}
Fix $\Z^s$  as in \ref{ss:latweight} and fix also a compatible weight functions
 $\{w_q\}_q$   as in \ref{9weight}. 
 Consider also a sequence $\gamma:=\{x_i\}_{i=0}^t$  so that $x_0=0$,
$x_i\not=x_j$ for $i\not=j$, and $x_{i+1}=x_i\pm E_{v(i)}$ for
$0\leq i<t$. We write $T$ for the
union of 0-cubes marked by the points $\{x_i\}_i$ and of the
segments of type  $[x_i,x_{i+1}]$.
Then, by \ref{9SSP} we get a graded $\Z[U]$-module $\bH^*(T,w)$,
which is called the {\em
path lattice cohomology} associated with the `path' $\gamma$ and weights
$\{w_q\}_{q=0,1}$. It is denoted by $\bH^*(\gamma,w)$.
It has an augmentation with $\calt^+_{2m_\gamma}$,
where $m_\gamma:=\min_i\{w_0(x_i)\}$, and one gets the {\em reduced path lattice
cohomology} $\bH^0_{red}(\gamma,w)$ with
$$\bH^0(\gamma,w)\simeq\calt_{2m_\gamma}^+\oplus
\bH^0_{red}(\gamma,w).$$
It turns out that  $\bH^q(\gamma,w)=0$ for $q\geq 1$, hence its `Euler characteristic' can be defined as  (cf.  \ref{9F})
\begin{equation}\label{eq:euh0}
eu(\bH^*(\gamma,w)):=-m_\gamma+\rank_\Z\,(\bH^0_{red}(\gamma,w)).\end{equation}

\begin{lemma} \label{9PC2}
One has the following expression of $eu(\bH^*(\gamma,w))$ in terms of the values of $w$:
\begin{equation}\label{eq:pathweights}
eu(\bH^*(\gamma,w))=-w_0(0)+\sum_{i=0}^{t-1}\,
\max\{0, w_0(x_{i})-w_0(x_{i+1})\}.
\end{equation}
\end{lemma}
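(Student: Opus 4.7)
The plan is to show that $\bH^q(\gamma,w)=0$ for all $q\geq 1$, then compute $\rank_\Z\bH^0_{red}(\gamma,w)$ by tracking how the number of connected components of the sublevel sets $S_n\cap T$ varies with $n$, and finally assemble the result via \eqref{eq:euh0}.

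First, for each $n\geq m_\gamma$ the set $S_n\cap T$ is the subcomplex of the 1-dimensional CW complex $T$ consisting of those vertices $x_i$ with $w_0(x_i)\leq n$, together with those segments $[x_i,x_{i+1}]$ for which $w_1([x_i,x_{i+1}])=\max\{w_0(x_i),w_0(x_{i+1})\}\leq n$. Hence $S_n\cap T$ is a disjoint union of closed sub-intervals of $T$ (possibly together with isolated vertices), so $H^q(S_n\cap T,\Z)=0$ for $q\geq 1$, while $\widetilde H^0(S_n\cap T,\Z)\cong \Z^{c_n-1}$, where $c_n$ denotes its number of connected components. In particular $\bH^q(\gamma,w)=0$ for $q\geq 1$, which justifies the formula \eqref{eq:euh0} already used in the statement.

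Second, I would record the elementary cellular count $c_n=v_n-e_n$, where $v_n:=\#\{i\in\{0,\dots,t\}\,:\,w_0(x_i)\leq n\}$ and $e_n:=\#\{i\in\{0,\dots,t-1\}\,:\,w_1([x_i,x_{i+1}])\leq n\}$; this is simply the Euler characteristic of a disjoint union of intervals. Since $S_n\cap T=T$ for $n\geq \max_i w_0(x_i)$, one has $c_n=1$ eventually, so $\rank_\Z\bH^0_{red}(\gamma,w)=\sum_{n\geq m_\gamma}(c_n-1)$ is a finite sum.

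Finally, write $a_i:=w_0(x_i)$ and $b_i:=\max\{a_i,a_{i+1}\}$, and truncate at some $N\geq\max_i a_i$. A direct count gives $\sum_{n=m_\gamma}^N v_n=(t+1)(N+1)-\sum_{i=0}^t a_i$ and $\sum_{n=m_\gamma}^N e_n=t(N+1)-\sum_{i=0}^{t-1}b_i$, and the divergent $O(N)$ pieces cancel against $\sum_{n=m_\gamma}^N 1=N-m_\gamma+1$ upon subtraction, yielding $\rank_\Z\bH^0_{red}(\gamma,w)=m_\gamma-\sum_{i=0}^t a_i+\sum_{i=0}^{t-1}b_i$. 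Using the elementary identity $b_i-a_{i+1}=\max\{0,a_i-a_{i+1}\}$, this telescopes to $m_\gamma-a_0+\sum_{i=0}^{t-1}\max\{0,a_i-a_{i+1}\}$. Substituting into \eqref{eq:euh0} cancels the $m_\gamma$'s and gives the formula claimed. I do not anticipate a serious obstacle; the only bookkeeping care needed is the truncation step, since the individual sums $\sum_n v_n$ and $\sum_n e_n$ diverge separately and their leading behaviours must be paired correctly before cancellation.
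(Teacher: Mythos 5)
Your proposal is correct: the identification of $S_n\cap T$ as the induced subcomplex of a path graph (so $\bH^{\geq 1}=0$ and $c_n=v_n-e_n$), the truncated summation over $n$, and the identity $\max\{a_i,a_{i+1}\}-a_{i+1}=\max\{0,a_i-a_{i+1}\}$ all check out and assemble into the stated formula via \eqref{eq:euh0}. The paper does not reprove this lemma (it is quoted from \cite{Nlattice}), and your component-counting argument is essentially the standard one — both amount to the observation that each descent $w_0(x_i)>w_0(x_{i+1})$ contributes exactly $w_0(x_i)-w_0(x_{i+1})$ to $-m_\gamma+\rank_\Z\bH^0_{red}$.
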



\subsection{Graded roots and their cohomologies}\label{s:grgen} \cite{NOSz,NGr}

\begin{definition}\label{def:2.1} \
  Let $\RR$ be an infinite tree with vertices $\calv$ and edges
$\cale$. We denote by $[u,v]$ the edge with
 end-vertices  $u$ and $v$.  We say that $\RR$ is a {\em graded root}
with grading $\chic:\calv\to \Z$ if

(a) $\chic(u)-\chic(v)=\pm 1$ for any $[u,v]\in \cale$;

(b) $\chic(u)>\min\{\chic(v),\chic(w)\}$ for any $[u,v],\
[u,w]\in\cale$, $v\neq w$;

(c) $\chic$ is bounded below, $\chic^{-1}(n)$ is finite for any
$n\in\Z$, and $|\chic^{-1}(n)|=1$ if $n\gg 0$.

%

\vspace{1mm}

\noindent
An isomorphism of graded roots is a graph isomorphism, which preserves the gradings.
\end{definition}

\begin{definition}\label{9.2.6}{\bf The
  $\Z[U]$-modules associated with a graded root.}
Let us identify a graded root $(\RR,\chic)$ with its topological realization provided by vertices (0--cubes)
and segments (1--cubes). Define $w_0(v)=\chic(v)$, and $w_1([u,v])=\max\{\chic(u),\chic(v)\}$ and let $S_n$ be the
union of all cubes with weight $\leq n$. Then  we might set (as above) $\bH^*(\RR,\chi)=\oplus_{n\geq \min\chic}\
H^*(S_n,\Z)$. However, at this time $\bH^{\geq 1}(\RR,\chic)=0$; we set $\bH(\RR,\chic):=\bH^0(\RR,\chic)$.
Similarly, one defines $\bH_{red}(\RR,\chic)$ using the reduced cohomology, hence
$\bH(\RR,\chic)\simeq\calt_{2\min \chic}^+\oplus \bH_{red}(\RR,\chic)$.
\end{definition}

For a detailed concrete description of $\bH(\RR)$ in terms of the combinatorics of the root see \cite{NOSz}.

\bekezdes\label{bek:GRootW}{\bf The graded root associated with a weight function.}
Fix a free $\Z$-module and a system of weights $\{w_q\}_q$.
Consider the sequence of  topological  spaces (finite cubical  complexes) $\{S_n\}_{n\geq m_w}$
with $S_n\subset S_{n+1}$, cf. \ref{9complex}.
Let $\pi_0(S_n)=\{\calC_n^1,\ldots , \calC_n^{p_n}\}$ be the set of connected components of $S_n$.

Then  we define the  graded graph  $(\RR_w,\chic_w)$ as follows. The
vertex set  $\calv(\RR_w)$ is $\cup_{n\in \Z} \pi_0(S_n)$.
The grading $\chic_w:\calv(\RR_w)\to\Z$ is
$\chic_w(\calC_n^j)=n$, that is, $\chic_w|_{\pi_0(S_n)}=n$.
Furthermore, if  $\calC_{n}^i\subset \calC_{n+1}^j$ for some $n$, $i$ and $j$,
then we introduce an edge $[\calC_n^i,\calC_{n+1}^j]$. All the edges
 of $\RR_w$ are obtained in this way.
\begin{lemma}\label{lem:GRoot} $(\RR_w,\chic_w)$ satisfies all the required properties
of the definition of a graded root, except maybe the last one: $|\chic_w^{-1}(n)|=1$ whenever $n\gg 0$.
\end{lemma}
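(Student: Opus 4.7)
The plan is to verify each of the conditions of Definition~\ref{def:2.1} in turn, while noting that the final one, $|\chic^{-1}(n)|=1$ for $n\gg 0$, is genuinely not expected to follow from the setup. Condition (a) is immediate from the construction: every edge of $\RR_w$ is of the form $[\calC_n^i,\calC_{n+1}^j]$ with $\calC_n^i\subset \calC_{n+1}^j$, so its endpoints carry gradings $n$ and $n+1$. The first half of (c) holds because $S_n=\emptyset$ for $n<m_w$, forcing $\chic_w\geq m_w$; the second half, finiteness of $\chic_w^{-1}(n)=\pi_0(S_n)$, follows from the fact that each $S_n$ is a finite cubical complex thanks to the assumption \eqref{9weight}.

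The crucial structural observation, which drives both (b) and the tree property, is that every vertex $u=\calC_n^i$ has a \emph{unique} up-neighbor: since $u$ is a connected subset of $S_n\subset S_{n+1}$, it lies inside exactly one component of $S_{n+1}$. Condition (b) is then an immediate corollary: if $u$ has two distinct neighbors $v,w$, then at most one of them can sit at level $n+1$ while the other must sit at level $n-1$, and so $\min\{\chic_w(v),\chic_w(w)\}=n-1<n=\chic_w(u)$.

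It remains to verify that $\RR_w$ is an infinite tree. Absence of cycles again rests on uniqueness of up-neighbors: in any putative cycle, the sequence of $\chic_w$-values must attain a local minimum at some vertex $v_i$, which would force $v_{i-1}$ and $v_{i+1}$ to be two distinct up-neighbors of $v_i$, a contradiction. For connectivity I would use that $\bigcup_n S_n = \R^s$ and that the $1$-skeleton of the cube decomposition of $\R^s$ is path-connected: any two lattice points can be joined by a finite polygonal path in the $1$-skeleton, and the maximum of $w_0$ along this path exhibits some $N$ for which both endpoints already lie in a common component of $S_N$; hence the ascending chains of components starting from any two vertices of $\RR_w$ merge at some finite level. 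The step requiring the most care is precisely this combined acyclicity/connectivity argument, which has to exploit the uniqueness of up-neighbors and the global connectedness of $\R^s$ simultaneously; everything else is essentially bookkeeping with the definitions, and no attempt is made (nor can one be made in general) to force $|\chic_w^{-1}(n)|=1$ for large $n$.
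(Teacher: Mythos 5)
Your proposal is correct, and it follows exactly the standard argument: the paper itself states Lemma~\ref{lem:GRoot} without proof (it is recalled from the cited earlier work on graded roots), and the expected verification is precisely the one you give — the unique up-neighbor of each component $\calC_n^i\subset S_n\subset S_{n+1}$ yields property (b) and acyclicity via the local-minimum argument, while finiteness of $S_n$ from \eqref{9weight} gives the finiteness of $\chic_w^{-1}(n)$ and the exhaustion $\bigcup_n S_n=\R^s$ gives connectedness (and, trivially, infiniteness of the vertex set). No gaps.
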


The property  $|\chic_w^{-1}(n)|=1$ for $n\gg 0$ is not always satisfied.
However, the graded roots associated with connected negative definite plumbing graphs
(see below) satisfies this condition as well.

\begin{proposition}\label{th:HHzero} If $\RR$ is a graded root associated with $(T,w)$ and
 $|\chic_w^{-1}(n)|=1$ for all $n\gg 0$ then $\bH(\RR)=\bH^0(T,w)$.
\end{proposition}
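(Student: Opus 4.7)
The plan is to unpack both sides explicitly as graded $\Z[U]$-modules given by direct sums of $H^0$ of sublevel complexes, and then to produce a natural degree-preserving bijection between their sets of connected components. On the topological side, $\bH^0(T,w) = \bigoplus_{n\geq m_w} H^0(S_n,\Z)$; on the root side, $\bH(\RR) = \bigoplus_{n\geq \min \chic} H^0(S_n^{\RR},\Z)$, where $S_n^{\RR}$ denotes the subcomplex of the geometric realization of $\RR$ carrying weight $\leq n$. With the weights $w_0(v)=\chic(v)$ and $w_1([u,v]) = \max\{\chic(u),\chic(v)\}$, and since every edge of $\RR$ connects consecutive levels (property (a) of a graded root), the subcomplex $S_n^{\RR}$ is exactly the subtree of $\RR$ spanned by all vertices of grading $\leq n$. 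Note incidentally that $\min \chic = m_w$, since the vertex set at level $n$ of $\RR_w$ is nonempty iff $S_n\neq\emptyset$.

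Next, I would establish a natural bijection $\pi_0(S_n^{\RR}) \leftrightarrow \pi_0(S_n)$. By construction of $\RR_w$, the vertices at level $n$ are in bijection with $\pi_0(S_n) = \{\calC_n^1,\ldots,\calC_n^{p_n}\}$. The key observation is that every vertex $\calC_m^i$ of $\RR_w$ has a unique upward neighbor at level $m+1$: indeed, the connected set $\calC_m^i \subset S_m \subset S_{m+1}$ is contained in exactly one $\calC_{m+1}^j$. Iterating this at most $n-m$ times, every vertex at level $m \leq n$ has a unique ancestor at level $n$, reached by an upward path that lies entirely in $S_n^{\RR}$. Consequently, every component of $S_n^{\RR}$ contains at least one level-$n$ vertex, and two distinct level-$n$ vertices sit in different components of $S_n^{\RR}$, because any path joining them in $\RR$ would be forced to traverse an edge into level $n+1$, which is excluded from $S_n^{\RR}$. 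Thus the level-$n$ vertices form a complete set of representatives for $\pi_0(S_n^{\RR})$, yielding the desired canonical bijection.

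Finally I would verify that this identification intertwines the $\Z[U]$-module and grading structures. Both gradings coincide by construction, since $H^0(S_n,\Z)$ and $H^0(S_n^{\RR},\Z)$ both sit in degree $2n$. The $U$-actions on both sides are induced by restriction along the inclusions $S_n \hookrightarrow S_{n+1}$ and $S_n^{\RR} \hookrightarrow S_{n+1}^{\RR}$; in the dual basis of connected components, each of these restriction maps sends the functional dual to $\calC_{n+1}^j$ to the sum of functionals dual to the $\calC_n^i$'s it contains, and under the established bijection these actions agree. The hypothesis $|\chic_w^{-1}(n)| = 1$ for $n \gg 0$ enters precisely to ensure that $\RR$ is a genuine graded root (Definition 2.7(c)) so that $\bH(\RR)$ is defined, and symmetrically it forces $|\pi_0(S_n)| = 1$ for $n \gg 0$, so that both sides split compatibly as $\calt^+_{2m_w} \oplus \bH^0_{red}$.

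The main technical point, though conceptually light, is the bijection on $\pi_0$; it rests on the asymmetry that each vertex of $\RR_w$ has exactly one upward neighbor while downward branching is unrestricted, which ultimately reflects the uniqueness of the component of $S_{m+1}$ containing a given component of $S_m$. Once this is in hand, everything else is bookkeeping on connected components.
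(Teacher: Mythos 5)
Your proposal is correct and is essentially the intended argument: the paper only recalls this proposition from [NOSz, NGr] without proof, and the standard proof is exactly your identification of $\pi_0(S_n^{\RR})$ with $\pi_0(S_n)$ via the level-$n$ vertices, using that each vertex of $\RR_w$ has a unique upward neighbour (so distinct level-$n$ vertices lie in distinct components of the truncated root) together with the compatibility of the restriction maps defining the $U$-action. Nothing further is needed.
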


\section{Combinatorial lattice cohomology} \label{ss:CombLattice}

\subsection{}
In this section we review several combinatorial statements regarding the lattice cohomology
associated with any weight function with certain combinatorial  properties. We follow \cite{AgNe1}.


\bekezdes \label{bek:comblattice}
Fix  $\Z^s$ with a fixed basis $\{E_v\}_{v\in\cV}$.
Write $E_I=\sum_{v\in I}E_v$ for $I\subset \cV$ and  $E=E_{\cV}$.
Fix also  an element
$c\in \Z^s$, $c\geq E$.
Consider the lattice points $R=R(0,c):=\{l\in\Z^s\,:\, 0\leq l\leq c\}$, and assume that
to each $l\in R$ we assign

(i)   an integer $h(l)$ such that $h(0)=0$ and $h(l+E_v)\geq h(l)$
for any $v$,

(ii)  an integer $h^\circ (l)$ such that $h^\circ (l+E_v)\leq  h^\circ (l)$
for any $v$.


Once  $h$ is fixed with (i),
a possible choice for $h^\circ $ is
 $h^{sym}$, where $h^{sym}(l)=h(c-l)$. Clearly, it depends on $c$.

\bekezdes
 We say that the $h$-function 
 satisfies the  {\it `matroid inequality'} if
 \begin{equation}\label{eq:matroid}
 h(l_1)+h(l_2)\geq h(\min\{l_1,l_2\})+h(\max\{l_1,l_2\}), \ \ l_1,l_2\in R.
 \end{equation}
 This implies the {\it `stability property'},
 valid for any $\bar{l}\geq 0$ with $|\bar{l}|\not\ni E_v$
 \begin{equation}\label{eq:stability}
 h(l)=h(l+E_v)\ \ \Rightarrow\  \ h(l+\bar{l})=h(l+\bar{l}+E_v).
 \end{equation}
If $\hh$ is given by a filtration (see below) then it automatically satisfies the matroid inequality.

\bekezdes
We  consider the set of cubes $\{\calQ_q\}_{q\geq 0}$ of $R$ as in \ref{9complex} and
the weight function
$$w_0:\calQ_0\to\Z\ \ \mbox{by} \ \ w_0(l):=h(l)+h^\circ (l)-h^\circ (0).$$
Clearly  $w_0(0)=0$.
Furthermore, we define
$w_q:\calQ_q\to \Z$ by $ w_q(\square_q)=\max\{w_0(l)\,:\, l \
 \mbox{\,is a vertex of $\square_q$}\}$. We will use the symbol $w$ for the
 system $\{w_q\}_q$.
 The compatible weight functions define the lattice cohomology $\bH^*(R,w)$.
Moreover, for any increasing path $\gamma$ connecting 0 and $c$
 we also have
 a path lattice cohomology $\bH^0(\gamma,w)$ as in \ref{bek:pathlatticecoh}. Accordingly,  we have the numerical
Euler characteristics  $eu(\bH^*(R,w))$, $eu(\bH^0(\gamma,w))$ and $\min_\gamma eu(\bH^0(\gamma,w))$ too.

\begin{lemma}\label{lem:comblat} \ \cite{AgNe1} We have   $0\leq eu(\bH^0(\gamma,w))\leq h^\circ (0)-h^\circ (c)$
for any increasing path $\gamma$  connecting  0 to $c$.
 The equality $eu(\bH^0(\gamma,w))=h^\circ (0)-h^\circ (c)$
holds if and only if for any $i$
the differences $h(x_{i+1})-h(x_i)$ and $h^\circ (x_{i})-h^\circ (x_{i+1})$ simultaneously are not nonzero.
\end{lemma}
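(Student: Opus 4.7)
The plan is to invoke the path--lattice cohomology Euler characteristic formula \eqref{eq:pathweights} directly and let the monotonicity assumptions (i) and (ii) on $h$ and $h^\circ$ do the work. First I would note that $w_0(0)=h(0)+h^\circ(0)-h^\circ(0)=0$, so \eqref{eq:pathweights} reduces to
\[
eu(\bH^0(\gamma,w)) \;=\; \sum_{i=0}^{t-1}\max\bigl\{0,\,w_0(x_i)-w_0(x_{i+1})\bigr\}.
\]
Because $\gamma$ is increasing, one has $x_{i+1}=x_i+E_{v(i)}$, so setting $a_i:=h(x_{i+1})-h(x_i)\geq 0$ and $b_i:=h^\circ(x_i)-h^\circ(x_{i+1})\geq 0$ (these nonnegativities are precisely the content of (i) and (ii)), the definition of $w_0$ yields $w_0(x_i)-w_0(x_{i+1})=b_i-a_i$.

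Both bounds then drop out at once. The lower bound $eu(\bH^0(\gamma,w))\geq 0$ is trivial since each summand is a maximum with $0$. For the upper bound I would use the elementary inequality $\max\{0,b_i-a_i\}\leq b_i$ (valid because $a_i\geq 0$) and telescope:
\[
\sum_{i=0}^{t-1} b_i \;=\; h^\circ(x_0)-h^\circ(x_t) \;=\; h^\circ(0)-h^\circ(c).
\]

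For the equality case, since $a_i,b_i\geq 0$, the pointwise bound $\max\{0,b_i-a_i\}\leq b_i$ is an equality if and only if either $a_i=0$ or $b_i=0$, i.e.\ $a_i$ and $b_i$ are not simultaneously nonzero. Hence the global equality $eu(\bH^0(\gamma,w))=h^\circ(0)-h^\circ(c)$ holds iff this condition is satisfied at every step $i$, which is exactly the statement. No serious obstacle is expected: the proof is essentially a rearrangement of \eqref{eq:pathweights} combined with careful sign--tracking through the definition $w_0=h+h^\circ-h^\circ(0)$.
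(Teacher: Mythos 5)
Your proof is correct and is essentially the intended argument (the paper defers the proof to \cite{AgNe1}, but this is exactly the computation via Lemma \ref{9PC2}): writing each summand of \eqref{eq:pathweights} as $\max\{0,b_i-a_i\}$ with $a_i,b_i\geq 0$ from the monotonicity hypotheses, bounding it by $b_i$, and telescoping, with the equality analysis term by term. No gaps.
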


\begin{definition}\label{def:COMPGOR}
Fix  $(h,h^\circ,R)$ as in \ref{bek:comblattice}.
 We say that the pair $h$ and $h^\circ$ satisfy the `Combinatorial Duality  Property' (CDP) if
$h(l+E_v)-h(l)$ and $h^\circ (l+E_v)-h^\circ (l)$ simultaneously cannot be nonzero
for $l,\, l+E_v\in R$. Furthermore,
 we say that $h$  satisfies  the CDP  if
 the pair $(h,h^{sym})$ satisfies  it.
\end{definition}

\begin{definition}\label{def:comblat}
We say that the pair   $(h, h^\circ) $ satisfy the

(a) {\it `path eu-coincidence'} if $eu(\bH^0(\gamma,w))=h^\circ (0)-h^\circ (c)$
for any increasing path $\gamma$.

(b)  {\it `eu-coincidence'} if $eu(\bH^*(R,w))=h^\circ (0)-h^\circ (c)$.
\end{definition}

\begin{remark}
 Example 4.3.3 of \cite{AgNe1}
 shows the following two facts.

Even if $h$ satisfies the path eu-coincidence (and $h^\circ =h^{sym}$),
in general it is not true that $\bH^0(\gamma,w)$
is independent of the choice of the increasing path.
(This statement remains valid even if we consider only the symmetric increasing paths, where a
 path $\gamma=\{x_i\}_{i=0}^t$ is symmetric if $x_{t-l}=c-x_l$ for any $l$.)

Even if $h$ satisfies both the path eu-coincidence and the eu-coincidence,
in general it is not true that $\bH^*(R,w)$ equals  any of the path lattice cohomologies
$\bH^0(\gamma,w)$ associated with a certain  increasing  path.
(E.g., in the mentioned Example 4.3.3  we have $\bH^1(R,w)\not=0$, a fact which does not hold for any
path lattice cohomology.) However, amazingly, all the Euler characteristics agree.
\end{remark}


\begin{theorem}\label{th:comblattice}
Assume that $h$ satisfies the stability property, and the pair $(h,h^\circ)$
satisfies the Combinatorial Duality  Property. Then the following facts hold.

\noindent (a) \  $(h,h^\circ)$
satifies both the path eu- and the eu-coincidence properties:  for any increasing $\gamma$  we have
$$eu(\bH^*(\gamma,w))=eu(\bH^*(R,w))=h^\circ (0)-h^\circ (c).$$
(b)
$$\sum_{l\geq 0}\,\sum _I\, (-1)^{|I|+1} w((l,I))\, \bt^{l}=
\sum_{l\geq 0}\,\sum _I\, (-1)^{|I|+1} h(l+E_I)\,\bt^{l}.$$
\end{theorem}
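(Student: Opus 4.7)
The plan is to reduce both parts to the identity
\[
w((l,I))=h(l+E_I)+h^\circ(l)-h^\circ(0),\qquad (\star)
\]
valid for every cube $(l,I)\subset R$. The upper bound $w((l,I))\leq h(l+E_I)+h^\circ(l)-h^\circ(0)$ is checked at every vertex using only monotonicity: for each $I'\subset I$, the discrepancy $w_0(l+E_{I'})-[h(l+E_I)+h^\circ(l)-h^\circ(0)]$ equals $(h^\circ(l+E_{I'})-h^\circ(l))-(h(l+E_I)-h(l+E_{I'}))$, whose first term is $\leq 0$ since $h^\circ$ is decreasing and whose second is $\geq 0$ since $h$ is increasing.

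The matching lower bound is the main obstacle. I would produce a distinguished $I_h\subset I$ saturating $(\star)$ by a greedy walk. Fix any ordering $v_1,\ldots,v_k$ of $I$, set $p_1:=l$, and include $v_i$ into $I_h$ precisely when $h$ strictly increases upon adding $E_{v_i}$ at the running position $p_i:=l+E_{I_h\cap\{v_1,\ldots,v_{i-1}\}}$ (updating $p_{i+1}:=p_i+E_{v_i}$ in that case and $p_{i+1}:=p_i$ otherwise). Along the resulting path from $l$ to $l+E_{I_h}$ the function $h$ strictly increases at every step by construction, so CDP forces $h^\circ$ to remain constant along the path, giving $h^\circ(l+E_{I_h})=h^\circ(l)$. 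For each omitted index $v_i\notin I_h$ we have $h(p_i)=h(p_i+E_{v_i})$, so by the stability property $h(p_i+\bar l)=h(p_i+\bar l+E_{v_i})$ for every $\bar l\geq 0$ whose support avoids $v_i$. Choosing $\bar l=E_{I\setminus S}-E_{I_h\cap\{v_1,\ldots,v_{i-1}\}}$ for any $S\subset I\setminus I_h$ with $v_i\in S$ yields $h(l+E_{I\setminus S})=h(l+E_{I\setminus(S\setminus\{v_i\})})$, and inducting on $|S|$ peels off the omitted indices one at a time to give $h(l+E_{I_h})=h(l+E_I)$. Hence $w_0(l+E_{I_h})=h(l+E_I)+h^\circ(l)-h^\circ(0)$, matching the upper bound and establishing $(\star)$.

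Part (a) follows readily. The path eu-coincidence is in fact immediate from Lemma \ref{lem:comblat} together with CDP alone, since CDP is precisely the hypothesis that the $h$- and $h^\circ$-increments never simultaneously vanish along an increasing edge. For the eu-coincidence, applying Lemma \ref{bek:LCSW} and substituting $(\star)$ splits the alternating sum $\sum_{(l,I)\subset R}(-1)^{|I|+1}w((l,I))$ into two pieces. Reindexing the first by $m:=l+E_I$, standard binomial cancellation collapses $\sum_{(l,I)\subset R}(-1)^{|I|+1}h(l+E_I)$ to $-h(0)=0$; for the second, the inner alternating sum $\sum_{I\subset\{v:l_v<c_v\}}(-1)^{|I|+1}$ vanishes unless $l=c$, where it equals $-1$, yielding $h^\circ(0)-h^\circ(c)$ overall.

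Part (b) is the most direct consequence of $(\star)$: the difference of the two generating series equals $\sum_l(h^\circ(l)-h^\circ(0))\bt^l\cdot\sum_{I\subset\cV}(-1)^{|I|+1}$, and the binomial sum $\sum_{I\subset\cV}(-1)^{|I|+1}=-(1-1)^{|\cV|}$ vanishes as $|\cV|\geq 1$. The whole argument is therefore driven by $(\star)$; the essential difficulty lives in the greedy construction of $I_h$, where the two hypotheses play complementary roles: CDP controls $h^\circ$ along the built path, while stability is exactly what is needed to propagate the local non-increase of $h$ outward to the full top vertex $l+E_I$.
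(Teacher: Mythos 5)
Your proof is correct. The paper itself states Theorem \ref{th:comblattice} without proof (it is imported from the companion paper \cite{AgNe1}), but your route — establishing the cube-weight identity $w((l,I))=h(l+E_I)+h^\circ(l)-h^\circ(0)$ via a greedy choice of $I_h\subset I$ (CDP forcing $h^\circ$ to be constant along the strictly $h$-increasing steps, stability propagating the $h$-plateaus at the skipped steps up to the top vertex), and then feeding this into Lemma \ref{bek:LCSW} and the binomial cancellations — is the natural argument and matches the standard treatment of such statements in this literature. One small slip: in your justification of the path eu-coincidence you write that CDP says the two increments ``never simultaneously vanish''; it should of course read ``are never simultaneously nonzero,'' which is what Lemma \ref{lem:comblat} requires and what you correctly use everywhere else.
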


\section{Surface singularities and the topological lattice cohomology}\label{s:prel}

\subsection{The combinatorics  of a resolution}\cite{Nfive,NOSz,NGr}
\bekezdes
Let $(X,o)$ be the germ of a complex analytic normal surface singularity with link $M$.
Let $\phi:\widetilde{X}\to X$ be a good   resolution   of $(X,o)$ with
 exceptional curve $E:=\phi^{-1}(0)$,  and  let $\cup_{v\in\calv}E_v$ be
the irreducible decomposition of $E$. 
 Let $\Gamma$ be the dual resolution graph of $\phi$.  Note that $\partial \tX\simeq M$.

The lattice $L:=H_2(\widetilde{X},\mathbb{Z})$ is  endowed
with the natural  negative definite intersection form  $(\,,\,)$. It is
freely generated by the classes of  $\{E_v\}_{v\in\mathcal{V}}$.
 The dual lattice is $L'={\rm Hom}_\Z(L,\Z) \simeq\{
l'\in L\otimes \Q\,:\, (l',L)\in\Z\}$.
It  is generated
by the (anti)dual classes $\{E^*_v\}_{v\in\mathcal{V}}$ defined
by $(E^{*}_{v},E_{w})=-\delta_{vw}$ (where $\delta_{vw}$ stays for the  Kronecker symbol).
$L'$ is also  identified with $H^2(\tX,\Z)$. 

We define the Lipman cone as $\calS':=\{l'\in L'\,:\, (l', E_v)\leq 0 \ \mbox{for all $v$}\}$, and we  also
set $\calS:=\calS'\cap L$. If $s'\in\calS'\setminus \{0\}$ then
all its $E_v$--coordinates  are strict positive.

The intersection form embeds $L$ into $L'$ with
 $ L'/L\simeq {\rm Tors}( H_1(M,\mathbb{Z}))$, which is abridged by $H$.
 The class of $l'$ in $H$ is denoted by $[l']$.

There is a natural partial ordering of $L'$ and $L$: we write $l_1'\geq l_2'$ if
$l_1'-l_2'=\sum _v r_vE_v$ with every  $r_v\geq 0$. We set $L_{\geq 0}=\{l\in L\,:\, l\geq 0\}$ and
$L_{>0}=L_{\geq 0}\setminus \{0\}$.
The support of a cycle $l=\sum n_vE_v$ is defined as  $|l|=\cup_{n_v\not=0}E_v$.


The {\it (anti)canonical cycle} $Z_K\in L'$ is defined by the
{\it adjunction formulae}
$(Z_K, E_v)=(E_v,E_v)+2-2g_v$ for all $v\in\mathcal{V}$, where
$g_v$  denotes the genus of $E_v$.
The cycle $-Z_K$  is the first Chern class of the line bundle $\Omega^2_{\tX}$.
We write $\chi:L'\to \Q$ for the (Riemann--Roch) expression $\chi(l'):= -(l', l'-Z_K)/2$.


If  $H_1(M,\Q)=0$ then
each $E_v$ is rational, and the dual graph of any good resolution is a tree. In this case $H_1(M,\Z)=H$ is finite. In this case
we denote the Pontrjagin dual $\mathrm{Hom}(H,S^1)$ of $H$ by $\widehat{H}$.
Let $\theta:H\to \widehat{H}$ be the isomorphism $[l']\mapsto
e^{2\pi i(l',\cdot)}$ of $H$ with  $\widehat{H}$.

\begin{definition}\label{def:char}
The set of characteristic elements are defined as
\begin{equation}\label{eq:char}
{\rm Char}={\rm Char}(L)=\{k\in L'\, :\, (l,l+k)\in 2\bZ \ \ \mbox{for any $l\in L$}\}.
\end{equation}
\end{definition}
Note that $-Z_K\in {\rm Char}$   and ${\rm Char}=-Z_K+2L'$ (and
${\rm Char}$ is an $L'$ torsor by the  action $l'*k=k+2l'$).
The RR--expression $\chi$  has an analogue for any
$k\in {\rm Char}$, namely
one defines $\chi_k:L\to \Z$ \, by \, $\chi_k(l):=-(l,l+k)/2$.

\bekezdes\label{ss:mincyc} {\bf Canonical representatives and spin$^c$-structures.}
For any $h\in H$ there exists a unique element $r_h=\sum_vr_vE_v\in L'$ with $[r_h]=h$ such that
each $r_v\in[0,1)$.
Similarly,
 for any $h\in H$ there is a
 unique minimal element of $\{l'\in L' \ | \ [l']=h\}\cap \mathcal{S}'$.
 It  will be denoted by $s_h$. For $h=0$ we have $r_h=s_h=0$.
One has  $s_h\geq r_h$;  in general, $s_h \neq r_h$.

Assume that the link  is a rational homology sphere.  Then ${\rm Spin}^c(\tX)$, the set of spin$^c$--structures on $\tX$, is
identified with the
set of characteristic elements on $L'$. Moreover, any spin$^c$--structure on $\partial \tX=M$ is
the restriction of a spin$^c$--structure of $\tX$  and if  $k$ and $k'$ induces  the same
spin$^c$--structure on the link
then $k'=k+2l$ for a certain $l\in L$.
This is an equivalence relation on ${\rm Char}$, the classes are denoted by $[k]$.
 If $k'=k+2l$ for some $l\in L$ then
  $\chi_{k'}(x-l)=\chi_k(x)-\chi_k(l)$
for any $x\in L$, hence the two
functions $\chi_k$ and $\chi_{k'}$ can be easily compared, and they have identical
qualitative properties.
Therefore, for each class $[k]=k+2L$ (that is, for each spin$^c$--structure
$\sigma[k]$  of
$M$), we might choose a
representative of $[k]$. Since the set of classes is indexed by $H$; we define the set
of representatives
 by $k_r:=-Z_K+2s_h$, for each $h\in H$. Since $s_0=0$, for the trivial class
$h=0$ we get $\chi_{k_r}=\chi$. (This choice will produce several pleasant consequences, e.g. \ref{9STR2}{\it (d)}.)

\subsection{The topological lattice cohomology associated with $\phi:\tX\to X$}\label{s:latticeplgraphs}\cite{NOSz,Nlattice}

\bekezdes\label{9dEF1} We consider a good resolution $\phi$ as above
and we assume that the link  $M$ is a
rational homology sphere. We write $s:=|\cV|$.
We also fix a characteristic element $k\in {\rm Char}$.

Then we automatically have  a free $\Z$-module $L=\Z^s$ with
a fixed bases $\{E_v\}_v$, and  $k$ defines a set
of compatible weight functions $w$ by
$ w_k(\square_q)=\max\{\chi_k(v)\,:\, v\ \mbox{is a vertex of
$\square_q$}\}$.

\begin{definition}\label{9DEF}  The  $\Z[U]$-modules $\bH^*(\R^s,w)$ and
$\bH^*_{red}(\R^s,w)$ obtained by these weight functions are
called the {\em lattice cohomologies} associated with the pair
$(\phi,k)$ and are denoted by $\bH^*(\Gamma,k)$, respectively
$\bH^*_{red}(\Gamma,k)$.

The graded root associated with $(\Z^s,w_k)$ will be denoted by $\RR(\Gamma, k)$.
\end{definition}

\begin{proposition}\label{9STR2} \cite{NOSz,NGr,LN1}

(a) $\bH^*_{red}(\Gamma,k)$ is finitely generated over $\Z$.


(b) The set  $\bH^*(\Gamma,k_r)$ (indexed by the spin$^c$--structures of $M$)
depends only on $M$ and is independent of the choice of the good resolution $\phi$.
They are called the topological lattice cohomologies of the singularity $(X,o)$, or of the link $M$.
In the sequel we might also refer to is as $\bH^*(M,k_r)$.

(c) The restriction
$\bH^*(\Gamma,k_r)\to \bH^*((\R_{\geq 0})^s,k_r)$
induced by the inclusion $(\R_{\geq 0})^s\hookrightarrow \R^s$ is an isomorphism
of graded $\Z[U]$ modules.

There are similar statements for $\RR(\Gamma , k_r)$ instead of $\bH^*(\Gamma, k_r)$, which will
also  be denoted
by $\RR(M,k_r)$.
\end{proposition}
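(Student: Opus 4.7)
The plan is to prove the three claims in order, using negative-definiteness of the lattice for (a), blowup invariance for (b), and a positive-part retraction controlled by $s_h$ for (c). For part (a), since $(\cdot,\cdot)$ is negative definite the quadratic $\chi_k(l)=-\tfrac{1}{2}(l,l+k)$ tends to $+\infty$ as $|l|\to\infty$; hence each sublevel set $\{l\in L:\chi_k(l)\leq n\}$ is finite, each $S_n$ is a finite cubical subcomplex, and each $\widetilde{H}^*(S_n,\Z)$ is finitely generated. Moreover, once $n\geq N_0$ is large enough, $S_n$ contains a contractible ``core'' around the minimum of $\chi_k$, outside of which $\chi_k$ increases along every coordinate ray; a radial deformation retraction collapses $S_n$ to this core, forcing $\widetilde{H}^*(S_n,\Z)=0$ and yielding finite generation of $\bH^*_{red}(\Gamma,k)=\bigoplus_{n<N_0}\widetilde{H}^*(S_n,\Z)$.

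For (b), any two good resolutions are connected by a chain of elementary blowups, each introducing one new vertex $v_0$ with $(E_{v_0},E_{v_0})=-1$, so it suffices to compare $\bH^*(\Gamma,k_r)$ and $\bH^*(\Gamma',k_r')$ across a single blowup. Writing $L'=L\oplus\Z E_{v_0}$ canonically and tracking the effect on $k_r$, $k_r'$, one checks that along the new coordinate direction the weight function $\chi_{k_r'}$ has a unique minimum with interval sublevel sets; this produces a cellular deformation retract $S_n(\Gamma')\to S_n(\Gamma)$ compatible with the graded $\Z[U]$-structure, establishing the isomorphism.

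For (c), substitution $k_r=-Z_K+2s_h$ gives the clean formula $\chi_{k_r}(l)=\chi(l)-(l,s_h)$ on $L$, with $\chi=\chi_{-Z_K}$. The key claim is that the coordinate-wise positive-part retraction $\R^s\to(\R_{\geq 0})^s$, $x\mapsto x_+$, preserves the filtration $\{S_n\}$, which reduces to $\chi_{k_r}(l_+)\leq \chi_{k_r}(l)$ for every $l\in L$. The identity
\[
\chi_{k_r}(l)-\chi_{k_r}(l_+)\;=\;(l_+,l_-)+\chi(-l_-)+(l_-,s_h)
\]
decomposes the comparison into three ingredients: $(l_+,l_-)\geq 0$ from disjointness of supports and $(E_v,E_w)\geq 0$ for $v\neq w$; a lower bound on $\chi(-l_-)$ obtained by iterating a step-by-step computation sequence that exploits negative-definiteness of $(\cdot,\cdot)$ (which guarantees some $v\in|l_-|$ with $(l_-,E_v)<0$); and the Lipman-cone minimality of $s_h$, which is exactly strong enough to absorb the term $(l_-,s_h)\leq 0$. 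Upgrading this vertex-wise inequality to one on cube-max weights $w_q$ yields a cellular retraction and hence the desired isomorphism of graded $\Z[U]$-modules.

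The delicate step is (c): the three terms above cancel only barely, and equality already occurs in simple examples such as the $A_1$ singularity with its nontrivial spin$^c$-structure. Ensuring that cancellation persists at the level of cube weights $w_q$, and that the retraction is genuinely cellular rather than a vertex-wise comparison, is what makes the canonical representative $k_r=-Z_K+2s_h$ (rather than an arbitrary representative of its class) essential.
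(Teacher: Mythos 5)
The paper itself offers no proof of Proposition \ref{9STR2} — it is quoted from \cite{NOSz,NGr,LN1} — so I am measuring your argument against the proofs in those references, which the paper's own Section 6 and Remark \ref{bek:rhred} closely parallel. Your sketches of (a) and (b) follow that standard route and are acceptable at this level of detail, and your identity in (c), $\chi_{k_r}(l)-\chi_{k_r}(l_+)=(l_+,l_-)+\chi(-l_-)+(l_-,s_h)$, is correct. The gap is in how you bound the right-hand side. The three estimates you actually describe — $(l_+,l_-)\geq 0$; the bound $\chi(-l_-)\geq 0$ coming from a computation sequence in which $v$ is chosen with $(l_-,E_v)<0$; and $(l_-,s_h)\leq 0$ — only give the lower bound $(l_-,s_h)$, which is in general strictly negative, so the terms cannot be handled separately as you propose. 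What must be proved is $\chi_{k_r}(-x)\geq 0$ for all $x\in L_{>0}$, and the computation sequence has to be steered by the minimality of $s_h$ rather than by negative definiteness: if $(E_v,x-s_h)\geq 0$ for every $v\in|x|$, then (since $(E_v,x)\geq 0$ and $(E_v,s_h)\leq 0$ for $v\notin |x|$) one gets $s_h-x\in\calS'$ with $[s_h-x]=h$ and $s_h-x\lneq s_h$, contradicting the minimality of $s_h$ from \ref{ss:mincyc}; hence some $v\in|x|$ satisfies $(E_v,x-s_h)\leq -1$, and for that $v$ one computes $\chi_{k_r}(-x+E_v)-\chi_{k_r}(-x)=1+(E_v,x-s_h)\leq 0$, so induction on $|x|$ yields $\chi_{k_r}(-x)\geq \chi_{k_r}(0)=0$. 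This is precisely where the representative $k_r=-Z_K+2s_h$ enters; your write-up names the right ingredients but does not combine them into a valid estimate.

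There is a second gap, of a topological nature. Even granting $w_0(l)\geq w_0(l_+)$ at every lattice point, the positive-part map $p:x\mapsto x_+$ only provides a retraction $S_n\to S_n\cap(\R_{\geq 0})^s$ with $p\circ\iota=\mathrm{id}$, which makes the restriction $H^*(S_n)\to H^*(S_n\cap(\R_{\geq 0})^s)$ split surjective but not yet injective. Injectivity requires $\iota\circ p$ to be homotopic to the identity \emph{inside} $S_n$, and the straight-line homotopy from $x$ to $x_+$ passes through cubes that are not a priori contained in $S_n$. One needs either the contractible-fibre/quasifibration scheme that the paper uses in the proof of Theorem \ref{th:annlattinda}, or a cube-by-cube deformation organized along the computation sequences above. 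As written, ``yields a cellular retraction and hence the desired isomorphism'' asserts exactly the point that still needs proof.
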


\bekezdes {\bf The Euler characteristic and the Seiberg--Witten invariant.}\label{s:LCSW} \
The Seiberg--Witten invariant ${\rm Spin}^c(M)\to \Q$ associates a rational number
$\sw _{\sigma}(M)$  to each spin$^c$--structure $\sigma$ of the link. Recall also
that  ${\rm Spin}^c(M)$ is an $H$--torsor, and it can be parametrized by the classes $[k]\in{\rm Char}/2L$, or by the representatives $\{k_r\}$.

\begin{theorem}\label{th:ECharLC} \cite{NJEMS} 
Let  $\sigma[k_r]$ be the spin$^c$--structure associated with $k_r$. Then
$$eu(\bH^*(M,k_r))=\sw_{\sigma[k_r]}(M)-\frac{k_r^2+|\cV|}{8}.$$
\end{theorem}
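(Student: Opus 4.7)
The plan is to reduce the Euler characteristic of $\bH^*(M, k_r)$ to a finite lattice sum, and then to match this sum with a known topological formula for the Seiberg--Witten invariant of the plumbed rational homology sphere $M$.

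First I would choose $c \in L$ deep inside the Lipman cone $\calS$, large enough that every sublevel set $S_n$ of $w_{k_r}$ on the first quadrant $(\R_{\geq 0})^s$ that contributes to the (finitely generated) reduced cohomology already lies inside the rectangle $R = [0,c]$. Such a $c$ exists because $\chi_{k_r}(l + E_v) - \chi_{k_r}(l) = -(E_v, l) - (E_v, k_r)/2$ is strictly positive once $l$ is sufficiently large in $\calS$, so $\chi_{k_r}$ grows to infinity outside $R$ and cannot contribute new topology beyond the cutoff. Combined with Proposition \ref{9STR2}(c), this yields $\bH^*(\Gamma, k_r) \cong \bH^*(R, w_{k_r})$, and Lemma \ref{bek:LCSW} then gives
$$eu(\bH^*(M, k_r)) \;=\; \sum_{\square_q \subset R}(-1)^{q+1} w_{k_r}(\square_q).$$

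Second, I would regroup this alternating sum by the base lattice point of each cube and, following the pattern of Theorem \ref{th:comblattice}(b), replace $\max\{\chi_{k_r}(l + E_{I'}): I' \subseteq I\}$ by $\chi_{k_r}(l + E_I)$ in each cube. The required monotonicity holds once $l$ is deep enough in $\calS$ by the same positivity computation as above; boundary cubes (near the origin or near the upper face $l + E_I \not\leq c$), where the substitution fails, either telescope or can be controlled using the quadratic form of $\chi_{k_r}$. After this reorganization, $eu(\bH^*(M, k_r))$ becomes the ``constant term'' of a finite rational expression built out of $\chi_{k_r}$, i.e.\ the value at the origin of an equivariant counting function attached to $\Gamma$ in the $[k_r]$--component.

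Third, and this is the main obstacle, I would identify this counting-function value with $\sw_{\sigma[k_r]}(M) - (k_r^2 + |\cV|)/8$. Here I would invoke the Reidemeister--Turaev torsion description of $\sw$ for negative definite plumbed rational homology spheres (Turaev; Nicolaescu) and re-express the torsion through the multi-variable Poincar\'e (zeta) series $Z(\bt)$ attached to $\Gamma$, built from the dual base $\{E_v^*\}$. The normalized Seiberg--Witten invariant then equals the \emph{periodic constant} of $Z(\bt)$ in the $h$--equivariant component corresponding to $[k_r]$, and a direct expansion shows that this periodic constant is precisely the lattice sum produced in the previous step. The shift $(k_r^2 + |\cV|)/8$ comes out of comparing the Riemann--Roch normalization of $\chi_{k_r}$ with the intrinsic normalization of the torsion (equivalently, with the Casson--Walker term of $M$). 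Combining the three steps yields the stated equality.
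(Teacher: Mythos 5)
First, note that the paper does not prove Theorem \ref{th:ECharLC}: it is quoted from \cite{NJEMS}, so your proposal can only be measured against the argument of that reference. Your three--step architecture --- (1) reduce $eu(\bH^*(M,k_r))$ to the finite alternating cube sum of Lemma \ref{bek:LCSW} over a large rectangle, (2) convert that sum into the value of a counting function attached to a series built from $\Gamma$, (3) identify that value with $\sw_{\sigma[k_r]}(M)-(k_r^2+|\cV|)/8$ via the Reidemeister--Turaev torsion and the Casson--Walker invariant --- is indeed the route taken there, and steps (1) and (3) are acceptable as sketches (modulo the small slip that $\chi_{k_r}(l+E_v)-\chi_{k_r}(l)=\chi_{k_r}(E_v)-(E_v,l)$, and that the retraction onto $[0,c]$ uses largeness of $l_v$ relative to its neighbours rather than largeness of $l$ in $\calS$).

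The genuine gap is in step (2). The substitution of $w_{k_r}((l,I))=\max_{I'\subset I}\chi_{k_r}(l+E_{I'})$ by $\chi_{k_r}(l+E_I)$ cannot be justified ``following the pattern of Theorem \ref{th:comblattice}(b)'': that theorem requires the Combinatorial Duality Property, which in this paper is an \emph{analytic} input (proved via Laufer duality in Lemma \ref{lem:hsimult}) with no analogue for the purely topological weight $\chi_{k_r}$; there is not even a decomposition $\chi_{k_r}=\hh+\hh^\circ$ to feed into that theorem. Worse, the substituted sum degenerates: since $\chi_{k_r}$ is quadratic on $L$, the full alternating difference $\sum_{I\subset\cV}(-1)^{|I|+1}\chi_{k_r}(l+E_I)$ vanishes identically once $|\cV|\geq 3$, so after your replacement only boundary cubes survive --- and these are precisely the cubes (near the origin, outside the monotonicity region of $\chi_{k_r}$) where the replacement is invalid. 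In other words, all of the content of $eu(\bH^*(M,k_r))$ is concentrated exactly where your argument says nothing. What is actually needed at this point is the nontrivial identity of \cite{NJEMS} expressing the generating series $\sum_{l,I}(-1)^{|I|+1}w_{k_r}((l,I))\,\bt^{l}$ as (the relevant $H$--component of) the zeta series $Z(\bt)=\prod_v(1-\bt^{E_v^*})^{\delta_v-2}$, where $\delta_v$ denotes the valency of $v$; this identity has its own combinatorial proof and is the bridge to the periodic--constant computation of $\sw$ in your step (3). Without it, steps (2) and (3) do not connect.
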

In other words, {\it the topological lattice cohomology is the categorification of the Seiberg--Witten
invariant   (normalized by $(k_r^2+|\cV|)/8$)}.

\begin{remark}\label{bek:rhred}
Consider the topological lattice cohomologies associated with characteristic elements
$-Z_K+2r_h$ and $-Z_K+2s_h$, and with cubes from $\R^s$ and $\R_{\geq 0}^s$.
 We claim that there also exists  a graded $\Z[U]$--module isomorphism (the analogue of
 Proposition \ref{9STR2}{\it (c)}):
 $$\bH^*(\R^s, -Z_K+2r_h)\simeq \bH^*(\R_{\geq 0}^s, -Z_K+2r_h).$$
 Indeed, write $s_h=r_h+\Delta_h$ for some $\Delta_h\in L_{\geq 0}$. Then, for  any $l\in L_{\geq 0}$
 $$\chi_{-Z_K+2s_h}(l-\Delta_h)=\chi_{-Z_K+2r_h}(l)-\chi_{-Z_K+2r_h}(\Delta_h).$$
 Therefore, up to a shift $\chi_{-Z_K+2r_h}(\Delta_h)$,  we have the  isomorphisms
 $$ \bH^*(\R^s, -Z_K+2r_h)\simeq \bH^*(\R^s, -Z_K+2s_h), \ \mbox{and} \
  \bH^*(\R_{\geq 0}^s, -Z_K+2r_h)\simeq \bH^*(\R_{\geq 0}^s-\Delta_h, -Z_K+2s_h).$$
  But the contraction which realizes Proposition \ref{9STR2}{\it (c)}
  (which contracts $\R^s$ onto $\R_{\geq 0}^s$ compatible with the weights)
  restricted to $\R_{\geq 0}^s-\Delta_h$, realizes an isomorphism (cf. \cite{LN1})
  $$ \bH^*(\R_{\geq 0}^s-\Delta_h, -Z_K+2s_h)\simeq \bH^*(\R_{\geq 0}^s, -Z_K+2s_h).$$
 Then use these identities together with  Proposition \ref{9STR2}{\it (c)}.
\end{remark}
\section{Preliminaries regarding analytic invariants}\label{ss:NSSAnlattice}
\setcounter{equation}{0}

\subsection{Natural line bundles}\label{ss:UAC}
Fix a complex normal surface singularity $(X,o)$ and in subsections \ref{ss:UAC} and \ref{ss:MultF}
 we assume that the link is  a rational homology sphere.

 By duality, $L'$ is isomorphic to $H^2(\widetilde{X},\bZ)$ and it is the
target of the first Chern class $c_1:{\rm Pic}(\widetilde{X})\to H^2(\widetilde{X},\bZ)$.
This morphism appears in the  exact sequence (induced by the exponential exact sequence
of sheaves):
\begin{equation}\label{eq:PIC}
0\to
 H^1(\tX,\cO_{\tX}) \longrightarrow 
  {\rm Pic}(\tX)\stackrel{c_1}{\longrightarrow} H^2(\tX,\bZ)\to 0.
\end{equation}

In this exact sequence  $c_1$
admits  a natural group section $s_L$  over the integral cycles
$L\subset L'$. Namely, for any $l\in L$ one takes $\cO_{\tX}(l)\in {\rm Pic}(\tX)$
with $c_1(\cO(l))=l$. By \cite{NGr}
$s_L$ can be extended in a unique way  to a natural group section
$s:L'\to {\rm Pic}(\tX)$. Its existence basically is guaranteed by the
facts that $H=L'/L$ is finite, while ${\rm Pic}^0(\tX):=H^1(\tX,\calO_{\tX})$ is torsion free.

\begin{definition}\label{def:natlinebd}  \ix{$\cO_{\tX}(l)$}
The line bundles $s(l')$, indexed by $l'\in L'$, and denoted also by $\cO_{\tX}(l'):=s(l')$,
 will be called {\it natural line bundles}.
\end{definition}

In fact,   a line bundle $\cL\in {\rm Pic}(\tX)$ is natural if and only if
some power of it has the form $\cO_{\tX}(l)$ for an integral cycle $l\in L$.

\bekezdes\label{bek:univabcov}{\bf The universal abelian covering.}
Let $c:(X_a,o)\to (X,o)$ be the universal abelian covering  \ix{$c:(X_a,o)\to (X,o)$}
of $(X,o)$: $(X_a,o)$  is  the  unique normal singular germ such that
$X_a\setminus \{o\}$ is
 the regular covering of $X\setminus \{o\}$ associated with
$\pi_1(X\setminus \{o\})\to H$.

 Since  $\tX\setminus
E\approx X\setminus \{o\}$, $\pi_1(\tX\setminus E)=\pi_1(X\setminus \{o\})\to H$ defines a
regular Galois covering of $\tX\setminus E$ as well. This has a unique
extension $\widetilde{c}:Z\to \tX$ with $Z$ normal and $\widetilde{c}$ finite.
(In other words,  $\widetilde{c}:Z\to \tX$ is the normalized pullback of $c$ via $\phi$.)
The (reduced) branch locus of $\widetilde{c}$ is included in $E$, and the
Galois action of $H$ extends to $Z$ as well.
Since $E$ is a normal
crossing divisor, the only singularities what $Z$ might have are
cyclic quotient singularities. Let
$r:\widetilde{Z}\to Z$ be  a resolution of these singular points
such that $(\widetilde{c}\circ r)^{-1}(E)$ is a normal crossing divisor. Set $p:=\widetilde{c}\circ r$.
\begin{equation}\label{eq:diagramUAC}
\begin{array}{ccccc}
\widetilde{Z} & \stackrel{r}{\longrightarrow} &Z& \stackrel{\psi_a}{\longrightarrow} & (X_a,o) \\
 & &
\Big\downarrow \vcenter{%
\rlap{$\scriptstyle{\widetilde{c}}$}} & &
\Big\downarrow \vcenter{%
\rlap{$\scriptstyle{c}$}} \\
 & &(\tX,E) & \stackrel{\phi}{\longrightarrow}  & (X,o)
\end{array}
 \end{equation}

\begin{theorem}\label{th:3.9}\ \cite{NGr,OkumaRat,Opg}
$\widetilde{c}_*\cO_Z$ is a vector bundle and its $H$-eigensheaf decomposition
 has the form:
 \begin{equation}\label{eq:3.9SUM}
\widetilde{c}_*\cO_Z\simeq \oplus _{\alpha\in \widehat{H}}\cL_\alpha,\end{equation}
where $\cL_{\theta(h)}=\cO_{\tX}(-r_h)$  for any $h\in H$.
In particular, $\widetilde{c}_*\cO_Z\simeq\oplus_{l'\in Q}\cO_{\tX}(-l')$.

More generally, for any $l'\in L'$ one has
 \begin{equation}\label{eq:3.9SUM2}
\widetilde{c}_*\cO_Z(-\widetilde{c}^*(l'))\simeq \oplus _{h\in H}
\cO_{\tX}(-r_h+\lfloor r_h-l'\rfloor).\end{equation}
\end{theorem}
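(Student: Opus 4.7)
The plan is to prove both statements by analyzing the $H$-eigensheaf decomposition of $\widetilde{c}_*\cO_Z$ and identifying each eigensheaf as a natural line bundle via a Chern-class computation combined with the torsion-freeness of $\mathrm{Pic}^0(\tX)$.

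First I would establish the general eigensheaf decomposition. Because $\widetilde{c}:Z\to\tX$ is a finite Galois cover with abelian group $H$ acting on $\cO_Z$, the sheaf $\widetilde{c}_*\cO_Z$ splits canonically as
\begin{equation*}
\widetilde{c}_*\cO_Z \;=\; \bigoplus_{\alpha\in\widehat H}\cL_\alpha,
\end{equation*}
and since $\tX$ is smooth and $\widetilde{c}$ has degree $|H|$, each $\cL_\alpha$ is a rank-one locally free $\cO_{\tX}$-module. The trivial character corresponds to the invariant subsheaf $\cO_{\tX}$, so $\cL_{\theta(0)}=\cO_{\tX}=\cO_{\tX}(-r_0)$.

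Second, I would pin down the Chern class $c_1(\cL_{\theta(h)})\in L'$ from the local structure of the branched cover. Near a generic point of each component $E_v$, the map $\widetilde{c}$ is cyclic, with inertia given by the image in $H$ of a meridian of $E_v$ under $\pi_1(\tX\setminus E)\twoheadrightarrow H$. The character $\theta(h)$ restricts to a character of this cyclic inertia group, and the associated eigenvalue controls the vanishing order of the $\theta(h)$-eigenspace along $E_v$. Under the identification $H\cong L'/L$, this vanishing order is exactly the $E_v$-coordinate of $r_h$, because the fractional representative $r_h\in L'\cap[0,1)^{\cV}$ is precisely the cycle whose coordinates record the meridian-monodromy characters of $h$. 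Globalizing yields $c_1(\cL_{\theta(h)})=-r_h$ in $L'$.

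Third, I would upgrade this Chern-class equality to the isomorphism $\cL_{\theta(h)}\simeq\cO_{\tX}(-r_h)$. Since $|H|\cdot h=0$ in $H$, the multiplication map makes $\cL_{\theta(h)}^{\otimes|H|}$ the invariant summand of $(\widetilde{c}_*\cO_Z)^{\otimes|H|}$, which one identifies with $\cO_{\tX}(-|H|r_h)$ (now an integral line bundle). Hence $\cL_{\theta(h)}\otimes\cO_{\tX}(r_h)$ lies in $\mathrm{Pic}^0(\tX)=H^1(\tX,\cO_{\tX})$ and is $|H|$-torsion; since $\mathrm{Pic}^0(\tX)$ is torsion-free it must be trivial. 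This is exactly the mechanism that produces the natural line bundle section $s:L'\to\mathrm{Pic}(\tX)$ of Definition \ref{def:natlinebd}.

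For the second formula I would apply the same eigenspace decomposition to the twisted sheaf $\widetilde{c}_*\cO_Z(-\widetilde{c}^*(l'))$. Although $l'\in L'$ need not be integral on $\tX$, the divisor $\widetilde{c}^*(l')$ is integral on $Z$ because the cover ``clears denominators'' along the branch locus. In the eigenspace $\theta(h)$ the Chern class becomes $-r_h-l'$, which one rewrites in the normal form using the unique decomposition $-r_h-l'=-r_{h+[l']}+\lfloor r_h-l'\rfloor$ (with the floor taken coordinate-wise so that the new fractional part lies in $[0,1)^{\cV}$); re-indexing the sum over $h$ produces the asserted summand $\cO_{\tX}(-r_h+\lfloor r_h-l'\rfloor)$.

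The main obstacle is the precise identification in step two: matching the local eigenvalue data of the Galois cover to the integral cycle $-r_h\in L'$ on the nose (not merely modulo $L$). This rests on the compatibility between the abstract isomorphism $H=L'/L\cong H_1(M,\Z)$ and the geometric meridian-monodromy characters along each $E_v$; once this compatibility is in hand, the $|H|$-th power argument and the bookkeeping for the second formula are essentially formal.
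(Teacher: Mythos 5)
This theorem is quoted in the paper from \cite{NGr,OkumaRat,Opg} without proof, so there is no in-paper argument to compare against; your reconstruction is essentially the standard proof from those references (eigensheaf decomposition of the finite abelian cover, local inertia computation along each $E_v$ to pin down $c_1(\cL_{\theta(h)})=-r_h$ exactly in $L'$, and the $|H|$-th power plus torsion-freeness of ${\rm Pic}^0(\tX)=H^1(\tX,\cO_{\tX})$ to upgrade the Chern-class equality to an isomorphism with the natural line bundle $\cO_{\tX}(-r_h)$). Two places where your sketch is loose but repairable: first, the multiplication map $\cL_{\theta(h)}^{\otimes |H|}\to \cO_{\tX}$ is an inclusion into the invariant summand whose image is $\cO_{\tX}(-|H|r_h)$, not an isomorphism onto the invariant summand, and identifying that image again requires the local vanishing-order computation of your second step — so steps two and three are really one argument, not two independent ones. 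Second, for (\ref{eq:3.9SUM2}) the projection formula does not apply directly, since for fractional $l'$ the divisor $\widetilde{c}^*(l')$ is not the pullback of a line bundle from $\tX$; one must redo the local eigenspace analysis for $\cO_Z(-\widetilde{c}^*(l'))$, and it is exactly this step that produces the correction term $\lfloor r_h-l'\rfloor$ (your displayed normalization $-r_h-l'=-r_{h+[l']}+\lfloor r_h-l'\rfloor$ has the index shifted; the summand $\cO_{\tX}(-r_h+\lfloor r_h-l'\rfloor)$ equals $\cO_{\tX}(-l'-r_{h-[l']})$, so the re-indexing is by $h\mapsto h-[l']$). Neither point is a genuine gap in the approach.
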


\bekezdes\label{bek:UACPG} {\bf The geometric genus of the universal abelian covering.}
In general (even if $H_1(M,\Q)\not=0$), the geometric genus of $(X,o)$ is defined as $p_g(X,o)=h^1(\tX, \calO_{\tX})$.
It is independent of the resolution.

Assume that the link of $(X,o)$ is a rational homology sphere. In this situation one defines the equivarant geometric genera
(indexed by $H$) as follows.

 Let $(X_a,o)\to (X,o)$ be the universal abelian covering of $(X,o)$,
 and consider the notations of the diagram (\ref{eq:diagramUAC}).
By definition, the  geometric genus $p_g(X_a,o)$ of $(X_a,o)$ is
$h^1(\widetilde{Z},\cO_{\widetilde{Z}})$. Since  $r:\widetilde{Z}\to Z$ is the resolution of the cyclic quotient singularities of $Z$, we have  $p_g(X_a,o)=h^1(\cO_Z)$. Since $\widetilde{c}$ is finite $h^1(\cO_Z)$  equals
$\dim (R^1\widetilde{c}_*\cO_Z)_o$,
 and it  has an eigenspace decomposition
$\oplus_{h\in H}(R^1\widetilde{c}_*\cO_Z)_{o,\theta(h)}$. By Theorem \ref{th:3.9}
the dimension of the $\theta(h)$-eigenspace is
$$p_g(X_a,o)_{\theta(h)}:= \dim\,(R^1\widetilde{c}_*\cO_Z)_{o,\theta(h)}=h^1(\tX,\co_{\tX}(-r_h)).$$
By summation:
$$p_g(X_a,o)=\sum_{h\in H}h^1(\tX,\co_{\tX}(-r_h)).$$
Clearly, for $h=0$ we get $p_g(X_a,o)_{\theta(0)}=p_g(X,o)$.

\begin{definition} If $H_1(M,\Q)=0$ we define the
equivariant geometric genus of $(X,o)$ associated with $h\in H$ by
$ p_g(X_a,o)_{\theta(h)}= h^1(\tX,\co_{\tX}(-r_h))$. Sometimes we abridge  it by $p_{g,h}=p_{g,h}(X,o)$.
\end{definition}

\subsection{Multivariable filtrations and series. Notations} \label{ss:MultF}\cite{CDG,CHR,NJEMS}

\bekezdes{\bf The module $\setZ[[L']]$.}\label{ring}
Once a resolution is fixed, hence the natural basis $\{E_v\}_v$ of $L$ is fixed too,
$\Z[[L]]$ is identified with  $\Z[\bt^{\pm 1}]=\setZ[[t_1^{\pm 1},\ldots,t_s^{\pm 1}]]$.
It is contained in the larger module
$\ringd=\setZ[[t_1^{\pm 1/d},\ldots,t_s^{\pm 1/d}]]$,
the module of formal (Laurent) power series in variables $t_v^{\pm 1/d}$, where $d:=|H|$.
 $\setZ[[L']]$ embeds into $\ringd$ as a  submodule: it  consists of  the $\bZ$-linear
combinations of monomials of type
$$\bt^{l'}=t_1^{l'_1}\cdots t_s^{l'_s}, \ \ \ \mbox{where} \ \
l'=\textstyle{\sum_v\,l'_vE_v}\in L'.$$



\begin{definition}\label{hcompa}
Any series $S(\bt)=\sum_{l'}a_{l'}\bt^{l'}\in \setZ[[L']]$
decomposes in a unique way as \begin{equation}\label{hdec}
S=\sum_{h\in H}S_h,\ \ \mbox{where} \ \
S_h=\sum_{[l']=h}a_{l'}\bt^{l'}.\end{equation} $S_h$ is called the
$h$-component of\, $S$.
\end{definition}
\ix{Poincar\'e series!$h$-component|textbf}

\bekezdes Consider the diagram from (\ref{eq:diagramUAC}) and
set $\phi_a=\psi_a\circ r$ and $p=\widetilde{c}\circ r$. One verifies that
$p^*(l')$ is an {\em integral}  cycle for any $l'\in L'$.

\begin{definition}\label{filtr}
The  $L'$--filtration on the local ring of holomorphic functions
$\cO_{X_a,o}$ is defined as follows.   For any $l'\in L'$, we set \ix{$\cF(l')$ (divisorial filtration)|textbf}
\begin{equation}\label{eq:F1}
\cF(l'):=\{ f\in \cO_{X_a,o}\ | \ {\rm div}(f\circ \phi_a)\geq  p^*(l')\}.
\end{equation}
Notice that the natural action of $H$ on $(X_a,o)$ induces an action on
$\cO_{X_a,o}$, which keeps  $\cF(l')$ invariant. Therefore, $H$ acts
on $\cO_{X_a,o}/\cF(l')$ as well. For any $l'\in L'$, let
$\hh(l')$ be the dimension of the $\theta([l'])$-eigenspace
$(\cO_{X_a,o}/\cF(l'))_{\theta([l'])}$. Then one defines the Hilbert
series \,$H(\bt)$ by  \ix{$\hh(l')$}
\begin{equation}\label{eq:33H}
H(\bt):=\sum_{l'\in L'} \widetilde{\hh}(l')\cdot\bt^{l'} \in \Z[[L']].
\end{equation}
\end{definition}

By \cite{NOSz}, for any $l'\in L'$ there
exists a unique minimal $s(l')\in \cS'$ such that $l'\leq s(l')$ and
$[l']=[s(l')]$.
Since for any $f\in \cO_{X_a,o}$, that part of $div(f\circ \phi_a)$,
which is supported by the exceptional divisor of $\phi_a$, is in the
Lipman's cone of $\widetilde{Z}$, we get
\begin{equation}\label{eq:sl'}
  \cF(l')=\cF(s(l')).
\end{equation}
\bekezdes
For a fixed $l'$ we write $[l']=h$. If
 $l'>0$  one has the exact sequence
\begin{equation}\label{exSeq}
0\to \cO_{\widetilde{Z}}(-p^*(l'))\to
\cO_{\widetilde{Z}}\to \cO_{p^*(l')}\to 0.
\end{equation}
The  $\theta(h)$-eigenspaces form  the exact sequence,
cf. (\ref{eq:3.9SUM2}),
\begin{equation}\label{exSeqphi}
0\to \cO_{\widetilde{X}}(-l')\to \cO_{\widetilde{X}}(-r_h)\to
\cO_{l'-r_h}(-r_h)\to 0.
\end{equation}
In particular, for $l'>0$, 
\begin{equation}\label{eq:hl'}
\widetilde{\hh}(l')=\dim \Big( \frac{H^0(\widetilde{Z},\cO_{\widetilde{Z}})}{
H^0(\widetilde{Z},\cO_{\widetilde{Z}}(-p^*(l')))}\Big)_{\theta(h)}=
\dim\ \frac{H^0(\widetilde{X},\cO_{\widetilde{X}}(-r_h))}
{H^0(\widetilde{X},\cO_{\widetilde{X}}(-l'))}.
\end{equation}
\begin{example}\label{ex: h0}
In (\ref{eq:hl'}) if $l'\in L$ then $r_h=0$.  Hence the $0$-component of $H(\bt)$ is
\begin{equation*}
H_0(\bt)=\sum_{l\in L}\dim \ \Big( \frac{\cO_{X,o}}{\{f\in \cO_{X,o}:
{\rm div}_E(f\circ \phi) \geq l\}}\Big)\,\cdot  \bt^l.
\end{equation*}
This is the Hilbert series of $\cO_{X,o}$ associated
with the divisorial filtration $L\ni l\mapsto \cF_0(l)=\{f\in \cO_{X,o}:
{\rm div}_E(f\circ \phi) \geq l\}$
of all  irreducible exceptional divisors of $\phi$.
\end{example}

\bekezdes  \label{bek:Poinc}
Next, we define the Poincar\'e series $P(\bt)=\sum_{l'\in   \cS'}\pp(l')\bt^{l'}$
 associated with the filtration \ix{$\pp(l')$}\ix{$\hh(l')$}
$\{\cF(l')\}_{l'}$
\begin{equation}\label{eq:4}
P(\bt)=-H(\bt)\cdot \prod_v(1-t_v^{-1}), \ \ \mbox{or} \ \
\pp(l' )=\sum_{I\subset \{1,\ldots, s\}}\, (-1)^{|I|+1}\widetilde{\hh}(l'+E_I),
\end{equation}
where  $E_I=\sum_{v \in I} E_v$.

\subsection{Vanishing theorems, dualities}\label{ss:AnPrel}
 Let $(X,o)$ be a normal surface singularity
(without any restriction regarding its link) and we fix a good resolution $\phi$.
Let $K_{\tX}$ be a canonical divisor on $\tX$, that is, $\Omega^2_{\tX}\simeq \calO_{\tX}(K_{\tX})$.

\begin{theorem}\label{th:GR} {\bf Generalized Grauert--Riemenschneider Theorem.}
\cite{GrRie,Laufer72,Ram,Book}
 Consider a line bundle $\cL\in {\rm Pic}(\widetilde{X})$ such that
$c_1(\cL(Z_K))\in \Delta -\cS_{\Q}$ for some
$\Delta\in L'$ with  $\lfloor \Delta \rfloor =0$.
Then $h^1(l,\cL|_{l})=0$ for any $l\in L_{>0}$.
In particular, $h^1(\widetilde{X},\cL)=0$ too. (Here $\calS_\Q$ denotes  the rational cone generated by $\calS$.)
\end{theorem}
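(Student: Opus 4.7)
\medskip

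\noindent\textbf{Proof plan.} The plan is to reduce the ``in particular'' claim to the claim on each infinitesimal neighborhood $l$, and then to prove $h^1(l, \cL|_l) = 0$ by induction on $|l|_1 := \sum_v l_v$, following the Grauert--Riemenschneider / Laufer tradition. For the first reduction, $\phi : \widetilde{X} \to (X,o)$ is proper and birational onto a Stein germ, so $R^1\phi_*\cL$ is a coherent sheaf with support at $o$; by the theorem on formal functions it coincides with $\varprojlim_{l} H^1(l, \cL|_l)$, which vanishes once every term does. Since $X$ is Stein, $H^1(\widetilde{X}, \cL) = H^0(X, R^1\phi_*\cL)$ then follows. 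So the whole problem reduces to the vanishing on each thickening.

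\medskip

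\noindent For the base case $l = E_v$, Serre duality on the smooth projective curve $E_v$ gives the desired vanishing as soon as $\deg(\cL|_{E_v}) = (c_1(\cL), E_v) > 2g_v - 2 = (E_v,E_v) - (Z_K, E_v)$, i.e.\ $(c_1(\cL(Z_K)), E_v) > (E_v, E_v)$. Writing $c_1(\cL(Z_K)) = \Delta - s$ with $s \in \cS_\Q$, one has $(c_1(\cL(Z_K)), E_v) \geq (\Delta, E_v)$ since $(s, E_v) \leq 0$, and
\[
(\Delta, E_v) \;=\; \Delta_v (E_v, E_v) + \sum_{w \neq v} \Delta_w (E_v, E_w) \;>\; (E_v, E_v),
\]
where the strict inequality uses $0 \leq \Delta_v < 1$ together with $(E_v, E_v) < 0$, and the remaining summands are $\geq 0$ because $(E_v, E_w) \geq 0$ and $\Delta_w \geq 0$ for $w \neq v$.

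\medskip

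\noindent For the inductive step with $|l|_1 \geq 2$, I would pick some $v \in |l|$ satisfying $(c_1(\cL(Z_K)) - l, E_v) > 0$, write $l = l_1 + E_v$ with $l_1 \in L_{\geq 0}$, and apply the short exact sequence of sheaves on $l$,
\[
0 \to \cL(-l_1)|_{E_v} \to \cL|_l \to \cL|_{l_1} \to 0.
\]
The induction hypothesis gives $H^1(l_1, \cL|_{l_1}) = 0$ (vacuous if $l_1 = 0$), and the very same degree calculation as in the base case — now reading $(c_1(\cL(Z_K)) - l, E_v) > 0$ — gives $H^1(E_v, \cL(-l_1)|_{E_v}) = 0$; the long exact sequence finishes the step.

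\medskip

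\noindent The technical heart of the argument is the existence of such a vertex $v \in |l|$ at every step. I would attempt the contradiction: assuming $(l - \Delta + s, E_v) \geq 0$ for every $v \in |l|$, pair with $l$ to obtain $(l - \Delta + s, l) = (l,l) + (s,l) - (\Delta, l) \geq 0$, and then exploit $(l,l) < 0$ (negative definiteness) and $(s,l) \leq 0$ (since $s \in \cS_\Q$) to extract the desired contradiction from $(\Delta, l)$. The subtlety is that $(\Delta, l)$ is not a priori signed, so a clean sign argument may fail on a single $l$; the standard remedy is to replace the naive induction on $|l|_1$ by a Laufer-type computation sequence $0 = l^{(0)} < l^{(1)} < \cdots < l^{(t)} = l$, at each step $l^{(i+1)} = l^{(i)} + E_{v_i}$ choosing $v_i$ so that $(c_1(\cL(Z_K)) - l^{(i+1)}, E_{v_i}) > 0$. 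The existence of such a sequence is ensured by the ``approach to the Lipman cone'' argument: whenever no strict vertex is available at $l$, one enlarges $l$ to an antinef cycle still compatible with $\Delta - \cS_\Q$, runs the vanishing there, and descends. Orchestrating this descent compatibly with the fractional shift $\Delta$ (and preserving $\lfloor \Delta \rfloor = 0$) is where I expect the main obstacle to lie.
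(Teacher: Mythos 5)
The paper does not prove Theorem \ref{th:GR}: it quotes it from \cite{GrRie,Laufer72,Ram,Book}. The standard proof in those references is exactly the computation--sequence argument you outline, and your skeleton is correct: the reduction of $h^1(\widetilde X,\cL)=0$ to the thickenings via coherence of $R^1\phi_*\cL$ and formal functions, the degree criterion $\deg(\cL(-l_1)|_{E_v})>2g_v-2$ obtained from Serre duality on the curve $E_v$, and the inductive use of $0\to\cL(-l_1)|_{E_v}\to\cL|_l\to\cL|_{l_1}\to 0$ are all as they should be (your base--case inequality $(\Delta,E_v)>(E_v,E_v)$ is also right). The gap is precisely where you say you expect it: you never establish the existence of a vertex $v\in|l|$ with $(c_1(\cL(Z_K))-l,E_v)>0$, and the remedy you gesture at (enlarging $l$ to an antinef cycle and descending) is both unexecuted and unnecessary; there is no issue of ``preserving $\lfloor\Delta\rfloor=0$'' because $\Delta$ never needs to be modified.

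Here is the missing lemma and its short proof. Write $c_1(\cL(Z_K))=\Delta-s$ with $s\in\cS_{\Q}$, so the required condition at $W\in L_{>0}$ reads $(W-\Delta+s,E_v)<0$ for some $v\in|W|$. Set $P:=\sum_{v\in|W|}(W_v-\Delta_v)E_v$ and $N:=\sum_{w\notin|W|}\Delta_wE_w$, so that $W-\Delta=P-N$. Since $W_v\geq 1>\Delta_v$ for $v\in|W|$, the cycle $P$ is nonzero with strictly positive coefficients on its support $|W|$; if one had $(P,E_v)\geq 0$ for all $v\in|W|$, then $(P,P)=\sum_{v\in|W|}(W_v-\Delta_v)(P,E_v)\geq 0$, contradicting negative definiteness. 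Hence some $v\in|W|$ has $(P,E_v)<0$, and for this $v$,
\[
(W-\Delta+s,E_v)=(P,E_v)-(N,E_v)+(s,E_v)\leq (P,E_v)<0,
\]
because $N\geq 0$ is supported off $|W|$ (so $(N,E_v)\geq 0$) and $s\in\cS_{\Q}$ (so $(s,E_v)\leq 0$). With this, your induction on $|l|_1$ closes as written. Your attempted contradiction by pairing against $l$ fails for exactly the reason you identify --- $(\Delta,l)$ has no sign --- and the correct fix is to discard the off--support part of $\Delta$ and the term $s$ \emph{before} invoking negative definiteness, not to change the induction scheme.
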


In particular, if
$\calL\in {\rm Pic}(\tX)$ and $l\in L_{>0}$ satisfies $l\in c_1(\calL)+Z_K+\calS$,
then $H^1(\tX, \calL)=H^1(l, \calL|_l)$.

 As above, we denote by $\lfloor Z_K\rfloor $ the integral part of $Z_K$, and by
$\lfloor Z_K\rfloor_+ $ its effective part. The above statements imply the following.
If $\lfloor Z_K\rfloor_+=0$ then $p_g=0$.
If $\lfloor Z_K\rfloor_+>0$ then for
 any $Z\geq \lfloor Z_K\rfloor_+$, $Z\in L$,
$p_g=h^1(\cO_{Z})$.

Furthermore,  if $l\in \cS$ and $n\in\Z_{\geq 0}$ such that $nl+\lfloor Z_K\rfloor>0$ then
by the above vanishing theorem we have  $H^1(\tX, \calO_{\tX}(-\lfloor Z_K\rfloor-nl-s_h))=0$, hence
\begin{equation*}
\dim \frac{H^0(\cO_{\tX}(-s_h))}{H^0(\cO_{\tX}(-\lfloor Z_K\rfloor-nl-s_h))}=
\chi(\lfloor Z_K\rfloor+nl)- (s_h, \lfloor Z_K\rfloor +nl)+h^1(\calO_{\tX}(-s_h)).
\end{equation*}
This implies that  for any $l\in \calS\setminus \{0\}$ and $n\gg 0$, and $l'_h$ either $r_h$ or $s_h$ we have
\begin{equation}\label{eq:vanh0}
\dim \frac{H^0(\cO_{\tX}(-l'_h))}{H^0(\cO_{\tX}(-nl-l'_h))}=
-\frac{n^2l^2}{2}+\mbox{lower order terms in $n$}.
\end{equation}


For certain cycles the Grauert-Riemenschneider Theorem \ref{th:GR} can be improved.

\begin{proposition}\label{prop:VAN0}\ {\bf Lipman's Vanishing Theorem.} \cite[Theorem 11.1]{Lipman}, \cite{Book}\
Take  $l\in L_{>0}$ with  $h^1(\cO_l)=0$  and $\cL\in {\rm Pic}(\tX)$ for which
 $(c_1\cL,E_v)\geq 0$ for any $E_v$ in the support of\, $l$. Then  $h^1(l,\cL)=0$.
 \ix{Vanishing Theorem!Lipman's|textbf}
\end{proposition}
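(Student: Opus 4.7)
My plan is to prove the vanishing by induction along a well-chosen computation sequence for $l$. The idea is that the hypothesis $h^1(\cO_l) = 0$ allows one to find a sequence of cycles $0 = Z_0 < Z_1 < \cdots < Z_t = l$, with $Z_{i+1} - Z_i = E_{v_i}$ for some $E_{v_i}$ in the support of $l$, enjoying two controlled properties at every stage: $h^1(\cO_{Z_i}) = 0$ and $h^1(E_{v_i}, \cO_{E_{v_i}}(-Z_i)) = 0$. The second condition is precisely the degree bound $-(Z_i, E_{v_i}) \geq 2g_{v_i} - 1$. Such a Laufer-type computation sequence can be produced greedily from below: at each stage with $Z_i < l$, one must find an admissible $v_i$; otherwise the associated exact sequence $0 \to \cO_{E_w}(-Z_i) \to \cO_{Z_i + E_w} \to \cO_{Z_i} \to 0$ (for some $E_w$ in the support of $l$) would contribute a nonzero $H^1$ class that persists all the way up to $H^1(\cO_l)$, contradicting its vanishing.

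Given such a sequence, the inductive argument is short. I claim $h^1(Z_i, \cL|_{Z_i}) = 0$ for all $i$, with the base case $i = 0$ trivial. For the step, tensor the structure-sheaf exact sequence with the locally free sheaf $\cL$:
$$0 \to \cL \otimes \cO_{E_{v_i}}(-Z_i) \to \cL|_{Z_{i+1}} \to \cL|_{Z_i} \to 0.$$
Taking cohomology and using the inductive hypothesis $h^1(Z_i, \cL|_{Z_i}) = 0$, it suffices to show that the leftmost term has vanishing $H^1$. That sheaf is a line bundle on the smooth irreducible curve $E_{v_i}$ of degree $(c_1\cL, E_{v_i}) - (Z_i, E_{v_i})$; the first summand is $\geq 0$ by hypothesis and the second is $\geq 2g_{v_i} - 1$ from the computation sequence, so the total degree exceeds $2g_{v_i} - 2$ and $H^1$ on $E_{v_i}$ vanishes. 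Hence $h^1(Z_{i+1}, \cL|_{Z_{i+1}}) = 0$, and iteration up to $i = t$ yields the desired $h^1(l, \cL|_l) = 0$.

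The main obstacle is establishing the existence of the Laufer computation sequence in the first paragraph; once that is available the rest is purely formal. Tensoring by the locally free $\cL$ preserves the exactness of the structure-sheaf filtration and only shifts the relevant degrees on the irreducible components $E_{v_i}$ by the nonnegative amounts $(c_1\cL, E_{v_i})$, so all intermediate vanishings propagate directly from $\cO$ to $\cL$. A minor technical subtlety is that the $E_{v_i}$ need not be rational, so one must invoke the full Riemann--Roch vanishing criterion ``degree $\geq 2g_{v_i} - 1$'' rather than the more elementary $\bP^1$-bound ``degree $\geq -1$.''
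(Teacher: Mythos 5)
The paper does not prove this proposition; it is quoted from Lipman. Your overall strategy (induction along a computation sequence $0=Z_0<\cdots<Z_t=l$ with $-(Z_i,E_{v_i})\geq 2g_{v_i}-1$ at each step, then twisting by $\cL$) is indeed the standard route, and your inductive step is correct as written. The gap is exactly where you locate it: the existence of the sequence. Your justification — build greedily from below, and if no admissible $v_i$ exists then a nonzero class of $H^1(E_w,\cO_{E_w}(-Z_i))$ ``persists'' to $H^1(\cO_l)$ — is wrong on two counts. First, the class need not persist: in the long exact sequence of $0\to\cO_{E_w}(-Z_i)\to\cO_{Z_i+E_w}\to\cO_{Z_i}\to 0$ the connecting map $H^0(\cO_{Z_i})\to H^1(\cO_{E_w}(-Z_i))$ can be onto; in fact $h^1(\cO_{Z_i+E_w})=0$ automatically, since $H^1(\cO_l)\to H^1(\cO_{l'})$ is onto for every $0<l'\leq l$, so \emph{nothing} persists. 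Second, the greedy construction can genuinely get stuck. Take the $D_4$ graph (four $(-2)$-curves, $E_0$ central) and $l=Z_{min}=2E_0+E_1+E_2+E_3$, which satisfies $h^1(\cO_l)=0$. If you greedily reach $Z_3=E_1+E_2+E_3$, the only possible next step is $E_0$, and $(Z_3,E_0)=3$, so $h^1(E_0,\cO_{E_0}(-Z_3))=h^1(\bP^1,\cO(-3))=2\neq 0$: no admissible continuation exists, even though a good sequence through $E_0+E_1+E_2+E_3$ does. So admissibility of the next step depends on the history, not only on $h^1(\cO_l)=0$, and your existence argument does not close.

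The correct existence argument peels from the top. Since $H^1(\cO_l)$ surjects onto $H^1(\cO_{l'})$ for all $0<l'\leq l$, every such $l'$ has $h^1(\cO_{l'})=0$, hence $\chi(l')=h^0(\cO_{l'})\geq 1$; in particular every $E_v$ in the support has $g_v=0$ (so your worry about non-rational components is vacuous — the hypothesis forces rationality of all components). Now if some $0<Z\leq l$ had $(Z-E_v,E_v)\geq 2$ for \emph{every} $v$ in its support, multiplying by the coefficients $z_v>0$ and summing would give $Z^2-\sum_v z_vE_v^2\geq 2\sum_v z_v$, which is precisely the statement $\chi(Z)\leq 0$ — a contradiction. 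Hence at every stage of the downward peeling there is a component $E_v$ with $(Z-E_v,E_v)\leq 1$, and reading the resulting sequence upward gives the computation sequence your induction needs. With this substitution your proof is complete and agrees with the cited argument of Lipman.
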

\bekezdes By {\bf Serre duality} $H^i(l, \calL)=H^{1-i}(l, \calL^{-1}(K_{\tX}+l))^*$
for any  $l\in L_{>0}$, $\calL\in {\rm Pic}(\tX)$ and  $i=0,1$.

\bekezdes\label{bek:LauferDual} {\bf Laufer's Duality.} \cite{Laufer72},
\cite[p. 1281]{Laufer77}
We can identify the dual space $H^1(\tX,\cO_{\tX})^*$ with the space of global holomorphic
2-forms on $\tX\setminus E$ up to the subspace of those forms which can be extended
holomorphically over $\tX$:
$H^1(\tX,\cO_{\tX})^*\simeq 
H^0(\tX\setminus E,\Omega^2_{\tX})/ H^0(\tX,\Omega^2_{\tX})$. 
Here $H^0(\tX\setminus E,\Omega^2_{\tX})$ can be replaced by
$H^0(\tX,\Omega^2_{\tX}(Z))$ for
any $Z>0$ with  $h^1(\cO_Z)=p_g$. Indeed, for any  $Z>0$, from the exacts sequence of sheaves
$0\to\Omega^2_{\tX}\to \Omega^2_{\tX}(Z)\to \calO_{Z}(Z+K_{\tX})\to 0$
and from the vanishing
$h^1(\Omega^2_{\tX})=0$ and Serre duality
\begin{equation}\label{eq:duality}
H^0(\Omega^2_{\tX}(Z))/H^0(\Omega^2_{\tX})=H^0(\calO_Z(Z+K_{\tX}))\simeq H^1(\calO_Z)^*.
\end{equation}
If  $H^1(\calO_Z)\simeq H^1(\calO_{\tX})$ then   the inclusion
$H^0(\Omega^2_{\tX}(Z))/H^0( \Omega^2_{\tX})\hookrightarrow
H^0(\tX\setminus E, \Omega^2_{\tX})/H^0(\Omega^2_{\tX})$
is an isomorphism.

\subsection{Cohomological cycles}
\label{rem:antmatroid}  \cite[4.8]{MR} Assume that $p_g>0$. The set
$L_{p_g}:=\{l\in L_{>0}\, : \, h^1(\cO_l)=p_g\}$ has a unique minimal element, denoted by $Z_{coh}$, and called
the {\it cohomological cycle} of $\phi$. It has the property that
 $h^1(\cO_l)<p_g$
for any $l\not\geq Z_{coh}$ ($l>0$).
By the consequences of Theorem \ref{th:GR} we obtain that $Z_{coh}\leq  \lfloor Z_K\rfloor_+$.
If $p_g=0$ then we set $Z_{coh}:=0$ by definition.
More generally, we have the following results.

\begin{proposition}\label{prop:cohcyc2} Fix a line bundle  $\cL\in {\rm Pic}(\widetilde{X})$.

(a) Assume that  $h^1(\widetilde{X},\cL)>0$.  The set
$L_{\cL}:=\{l\in L_{>0}\, : \, h^1(l,\cL)=h^1(\widetilde{X},\cL)\}$
has a unique minimal element, denoted by $Z_{coh}(\cL)$, called
the cohomological cycle of $\cL$ (and of $\phi$). It has the property that
$  h^1(l,\cL)<h^1(\widetilde{X},\cL)$
for any $l\not\geq Z_{coh}(\cL)$ ($l>0$).

(b) Let $l_1, l_2\in L_{>0}$ be effective cycles, and set
$l=\min\{l_1, l_2\}$ and $\overline{l}=\max\{l_1,l_2\}$. Then
$$h^1(\overline{l},\cL)+h^1(l,\cL)\geq h^1(l_1,\cL)+h^1(l_2,\cL).$$
We will refer
to this inequality as  the {\it `opposite' matroid rank inequality} of $h^1(\cL)$.

(c)
In particular, for any $l\in L_{>0}$ we have $h^1(l,\cL)=h^1(\min \{l,Z_{coh}(\cL)\},\cL)$.
\end{proposition}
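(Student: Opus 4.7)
My plan is to prove (b) first; parts (a) and (c) then follow formally from (b) together with two elementary monotonicity facts: for $l_1\le l_2$ in $L_{>0}$, $h^1(l_1,\cL)\le h^1(l_2,\cL)$, and for every $l\in L_{>0}$, $h^1(l,\cL)\le h^1(\tX,\cL)$. Both follow from the corresponding long exact cohomology sequences ($0\to\cL(-l_1)|_{l_2-l_1}\to\cL|_{l_2}\to\cL|_{l_1}\to 0$ and $0\to\cL(-l)\to\cL\to\cL|_l\to 0$) together with the vanishing of $H^2$ on a one-dimensional support.

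For (b), write $l=\min(l_1,l_2)$, $\overline l=\max(l_1,l_2)$, and introduce the ideal sheaf $\cJ:=\cO_{\tX}(-l_1)+\cO_{\tX}(-l_2)\subseteq\cO_{\tX}(-l)$. The identity $\cO_{\tX}(-l_1)\cap\cO_{\tX}(-l_2)=\cO_{\tX}(-\overline l)$ yields the ``Mayer--Vietoris'' short exact sequence
\begin{equation*}
0\to\cO_{\overline l}\to\cO_{l_1}\oplus\cO_{l_2}\to\cO_{\tX}/\cJ\to 0.
\end{equation*}
The quotient $\cO_{\tX}(-l)/\cJ$ is supported at the finitely many nodes of $E$ where $l_1$ and $l_2$ are incomparable (a local check in coordinates $(x,y)$ near $E_v\cap E_w$), hence is $0$-dimensional; consequently $H^1(\cO_{\tX}/\cJ\otimes\cL)\cong H^1(l,\cL)$ and $h^0(\cO_{\tX}/\cJ\otimes\cL)=h^0(l,\cL)+\delta$ for some $\delta\ge 0$. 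Tensoring the sequence with the locally free $\cL$ and computing the alternating sum of $h^0$ and $h^1$ along the resulting long exact cohomology sequence gives
\begin{equation*}
h^1(\overline l,\cL)+h^1(l,\cL)-h^1(l_1,\cL)-h^1(l_2,\cL)=h^0(\overline l,\cL)+h^0(l,\cL)+\delta-h^0(l_1,\cL)-h^0(l_2,\cL)\ge 0,
\end{equation*}
the final inequality holding because the image of $H^0(\overline l,\cL)\hookrightarrow H^0(l_1,\cL)\oplus H^0(l_2,\cL)$ equals the kernel of the next surjection onto $H^0(\cO_{\tX}/\cJ\otimes\cL)$.

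For (a), Theorem \ref{th:GR} guarantees $H^1(\tX,\cL(-l))=0$ for $l\in\cS$ sufficiently large, so the natural surjection $H^1(\tX,\cL)\to H^1(l,\cL)$ becomes an isomorphism and $L_{\cL}\ne\emptyset$. Given $l_1,l_2\in L_{\cL}$, (b) forces $h^1(\overline l,\cL)+h^1(l,\cL)\ge 2h^1(\tX,\cL)$; combined with the upper bound $h^1(\cdot,\cL)\le h^1(\tX,\cL)$, both summands must equal $h^1(\tX,\cL)$, so $l,\overline l\in L_{\cL}$ (in particular $l>0$, as $l=0$ would give $h^1(\overline l,\cL)\ge 2h^1(\tX,\cL)$, contradicting $h^1(\tX,\cL)>0$). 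Hence $L_{\cL}$ is closed under $\min$ inside $L_{>0}$ and admits a unique minimum $Z_{coh}(\cL)$; the strict inequality for $l\not\ge Z_{coh}(\cL)$ is the contrapositive. For (c), applying (b) to $l$ and $Z_{coh}(\cL)$ with $m=\min(l,Z_{coh}(\cL))$ and $M=\max(l,Z_{coh}(\cL))$ yields $h^1(m,\cL)\ge h^1(l,\cL)+h^1(\tX,\cL)-h^1(M,\cL)\ge h^1(l,\cL)$, while the reverse inequality is monotonicity since $m\le l$. The main technical point of the whole argument is the Mayer--Vietoris sequence and the $0$-dimensionality of $\cO_{\tX}(-l)/\cJ$; everything else is a formal manipulation of long exact sequences.
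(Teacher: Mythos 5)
Your argument is correct, and part (b) is proved by a genuinely different mechanism than in the paper. The paper assembles a $3\times 3$ commutative diagram of restriction sequences (with $a_i=l_i-l$) and chases it to obtain the exact sequence
$H^1(\cL|_{\overline{l}})\to H^1(\cL|_{l_1})\oplus H^1(\cL|_{l_2})\to H^1(\cL|_{l})\to 0$,
from which the inequality of (b) is immediate; you instead use the ideal-sheaf Mayer--Vietoris sequence
$0\to\cO_{\overline l}\to\cO_{l_1}\oplus\cO_{l_2}\to\cO_{\tX}/\cJ\to 0$ with $\cJ=\cO_{\tX}(-l_1)+\cO_{\tX}(-l_2)$, identify $\cO_{\tX}(-l)/\cJ$ as a zero-dimensional correction term, and extract the inequality from additivity of Euler characteristics plus an $h^0$-bound. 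The two routes are of course cousins (both are Mayer--Vietoris for infinitesimal neighbourhoods), but the paper's version delivers strictly more information --- exactness of the $H^1$-sequence --- which it then exploits in (a): when $h^1(l_1,\cL)=h^1(l_2,\cL)=h^1(\tX,\cL)$, the surjection onto $H^1(\cL|_l)$ combined with the surjectivity of $H^1(\tX,\cL)\to H^1(\cL|_{l_i})$ forces $h^1(\cL|_l)=h^1(\tX,\cL)$ directly. Your purely numerical squeeze ($h^1(\overline l)+h^1(l)\ge 2h^1(\tX,\cL)$ against the upper bound $h^1(\cdot,\cL)\le h^1(\tX,\cL)$) reaches the same conclusion and handles the degenerate case $\min\{l_1,l_2\}=0$ just as the paper does (the paper excludes it by a surjectivity contradiction onto $\C^{2h^1(\cL)}$, you by the inequality $h^1(\tX,\cL)\ge 2h^1(\tX,\cL)$). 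Two small points worth making explicit if you write this up: the identity $\cO_{\tX}(-l_1)\cap\cO_{\tX}(-l_2)=\cO_{\tX}(-\overline l)$ needs the same local monomial computation you invoke for the zero-dimensionality of $\cO_{\tX}(-l)/\cJ$; and the non-emptiness of $L_{\cL}$ together with closure under $\min$ gives a unique minimal element only after restricting to the finite set $\{l\in L_{\cL}: l\le l_0\}$ for a fixed $l_0\in L_{\cL}$ --- a one-line remark, but it is the reason a global minimum exists.
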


\begin{proof} In {\it (b)}
we can assume that  $a_i=l_i-l>0$, $i=1,2$.
 Consider the diagram with exact rows and columns.
$$\begin{array}{ccccccc}
 \ & \ & H^1(\cL(-l_1)|_{a_2})& \to & H^1(\cL(-l)|_{a_2}) & \to & 0 \\
\ & \ & \downarrow & \ & \downarrow & & \\
 H^1(\cL(-l_2)|_{a_1}) & \to& H^1(\cL|_{\bar{l}})& \to & H^1(\cL|_{l_2}) & \to & 0\\
 \downarrow & \ & \downarrow & \ & \downarrow & & \\
 H^1(\cL(-l)|_{a_1}) & \to& H^1(\cL|_{l_1})& \to & H^1(\cL|_{l}) & \to & 0\\
 \downarrow & \ & \downarrow & \ & \downarrow & & \\
 0 & \ & 0 & \ & 0 & \ & \
 \end{array}$$
 The exactness of the first row follows from the exact sequence
 $\cL(-l-a_1)|_{a_2}\to \cL(-l)|_{a_2}\to \cL_{a_1\cap a_2}\to 0$, where the support of
 $\cL_{a_1\cap a_2}$ is 0--dimensional.
 From the diagram
 one gets that \begin{equation}\label{eq:matr}
H^1(\cL|_{\bar{l}})\to H^1(\cL|_{l_1})\oplus H^1(\cL|_{l_2})\to H^1(\cL|_{l})\to 0\end{equation}
 is exact, hence {\it (b)} follows.

Assume that $h^1(\cL|_{l_1})=h^1(\cL|_{l_2})=h^1(\tX,\cL)$ for $l_1\not = l_2$,  $l_1, l_2\in L_{>0}$.
Set $l=\min\{l_1,l_2\}$. If $l=0$ then there is an exact sequence $0\to \cL|_{l_1+l_2}\to
\cL|_{l_1}\oplus \cL|_{l_2}\to A \to 0$, where   $A$ has zero--dimensional support, hence
$H^1(\cL|_{l_1+l_2})  \to H^1(\cL|_{l_1})\oplus H^1(\cL|_{l_2})=\C^{2h^1(\cL)}$ surjective, a fact which cannot happen.
Hence $l\not=0$. Then (\ref{eq:matr})
implies $H^1(\cL|_{l})=h^1(\cL)$ too. Hence, whenever $l_1,l_2\in L_{\cL}$ one also has
 $\min\{l_1,l_2\}\in L_{\cL}$. This implies {\it (a)}.  Finally, {\it (a)} and {\it (b)} implies {\it (c)}.
\end{proof}

%

If $h^1(\widetilde{X},\cL)=0$ then we define $Z_{coh}(\cL):=0$.

\section{The analytic lattice cohomology of $(X,o)$}\label{ss:anphi}

\subsection{Defintion and independence  of the choice of the  rectangle}\label{ss:anR}

\bekezdes
Our goal is to construct the {\it analytic lattice cohomology } of a normal surface singularity $(X,o)$
under the assumption that the link is a rational homology sphere.
In particular, for any  spin$^c$--structures of the link, or  for any representative $[k]\in{\rm Char}/2L$,
we wish to define a graded $Z[U]$--module.

We fix a good resolution $\phi$ and $h\in H$. Write $Z_{coh,h}$ for $Z_{coh}(\calO_{\tX}(-r_h))$.

For any $c\in L$, $c\geq Z_{coh,h}$,  we consider the rectangle $R(0,c)=\{l\in L\,:\,
0\leq l\leq c\}$. By definition of $Z_{coh,h}$
\begin{equation}\label{eq:h^1}
p_{g,h}=h^1(\tX,\calO_{\tX}(-r_h))=h^1(c,\calO_{\tX}(-r_h)).\end{equation}
Here we might  consider the $c=\infty$ case too, in this case $R(0,c)=L_{\geq 0}$.

\bekezdes {\bf The weight function.}
We consider the multivariable  Hilbert function $\widetilde{h}$, cf. (\ref{eq:hl'}), and
$$\hh:R(0,c)\to\Z, \ \  \hh(l):=\widetilde{\hh}(l+r_h)=\dim \, \big( H^0(\calO_{\tX}(-r_h))/H^0(\calO_{\tX}(-l-r_h))\,\big)$$
associated with the
divisorial filtration of $\calO_{X_a,o}$ and the resolution $\phi$, cf. \ref{eq:hl'}.
Clearly $\hh$ is increasing (that is, $\hh(l_1)\geq \hh(l_2)$ whenever $l_1\geq l_2$) and  $\hh(0)=0$.
Next, for any $l\in R(0,c)$, we consider the function
$$\hh^\circ(l)=p_{g,h}-h^1(\calO_l(-r_h)),$$ where $h^1(\calO_{l=0}(-r_h))$, by definition, is 0.
Then  $\hh^\circ$ is decreasing,
 $\hh^\circ (0)=p_{g,h}$  and $\hh^\circ (c)=0$, cf. (\ref{eq:h^1}).
We have the following reinterpretation in terms of (twisted) 2--forms.
For any $\bar{l}\geq 0$  consider the exact sequence
$$0\to \Omega^2_{\tX}(r_h)\to  \Omega^2_{\tX}(r_h+\bar{l})\to \Omega^2_{\tX}(r_h+\bar{l})|_{\bar{l}}
\to 0.$$ Since $H^1(\Omega^2_{\tX}(r_h))=0$ (cf. Theorem \ref{th:GR}) for any $\bar{l}\geq 0$ we obtain
(using  Serre duality too)
\begin{equation}\label{eq:Omega}
\frac{H^0(\Omega^2_{\tX}(r_h+\bar{l}))}{H^0(\Omega^2_{\tX}(r_h))}=H^0(
\bar{l}, \Omega^2_{\tX}(r_h+\bar{l}))\simeq H^1(\calO_{\bar{l}}(-r_h))^*.
\end{equation}
This applied for\, $\bar{l}=c$ and  $\bar{l}=l$ gives
\begin{equation}\label{eq:hcirc}
\dim\, \frac{H^0(\tX,\Omega^2_{\tX}(c+r_h))}{H^0(\Omega^2_{\tX}(l+r_h))}=p_{g,h}-h^1(\calO_l(-r_h))=\hh^\circ (l).
\end{equation}

\bekezdes {\bf The lattice cohomology.}
We consider the natural cube-decomposition of $R(0,c)$
 (where the 0-cubes  are the lattice points)  and
the set of cubes $\{\calQ_q\}_{q\geq 0}$ of $R(0,c)$ as in \ref{9complex}.
Then we define the weight function
\begin{equation}\label{eq:wean}
w_0:\calQ_0\to \Z, \ \ \  w_0(l)=\hh(l)+\hh^\circ (l)-\hh^\circ (0)=\hh(l)-h^1(\calO_l(-r_h)).
\end{equation}
Clearly,   $w_0(0)=0$. Let us list some properties of $w_0$.

First of all, note that $0\leq \hh^\circ(l)\leq p_{g,h}$ for every $l$, hence when $c=\infty$ then
$\hh$ and $w_0$ have comparable  asymptotic behaviours for $l\gg 0$. Using the monotonity of $\hh$,
(\ref{eq:sl'})   and
(\ref{eq:vanh0})
 a computation shows that  $w_0$ satisfies the requirement \ref{9weight}(a), namely,
 $w_0^{-1}((\-\infty, n]) \ \ \mbox{is finite for any $n\in\Z$}$.

 Next,  since $\hh$ is induced by a filtration, it satisfies the
matroid rank inequality
$\hh(l_1)+\hh(l_2)\geq  \hh(\overline{l})+\hh(l)$,
where $l=\min\{l_1, l_2\}$ and $\overline{l}=\max\{l_1,l_2\}$.
On the other hand, $h^1$ satisfies the `opposite' matroid rank inequality, see
\ref{rem:antmatroid}. Therefore, $w_0$ itself satisfies the matroid rank inequality (where $l_1,l_2\geq 0$)
  \begin{equation}\label{eq:matroidw00}
 w_0(l_1)+w_0(l_2)\geq  w_0(\overline{l})+w_0(l).
 \end{equation}

Furthermore, similarly as in \ref{9dEF1}, we define
$w_q:\calQ_q\to \Z$ by $ w_q(\square_q)=\max\{w_0(l)\,:\, l \
 \mbox{\,is any vertex of $\square_q$}\}$.
 In the sequel  we write $w$ for the system $\{w_q\}_q$ if there is no confusion.
 The weight functions $\{w_q\}_q$ define the lattice cohomology $\bH^*(R(0,c),w)$
 and the graded root $\RR(R(0,c),w)$
associated with $R(0,c)$ and $w$.

\begin{lemma}\label{lem:INDEPAN}
$\bH^*(R(0,c),w)$ and  $\RR(R(0,c),w)$ are independent on the choice of $c\geq Z_{coh,h}$.
\end{lemma}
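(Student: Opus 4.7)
The plan is to prove the lemma by showing that for every $c\geq Z_{coh,h}$ and every $v\in\cV$, the inclusion $R(0,c)\hookrightarrow R(0,c+E_v)$ induces isomorphisms of both the lattice cohomology and the graded root; since any two $c,c'\geq Z_{coh,h}$ are linked through $\max(c,c')$ by finitely many such elementary $+E_v$ increments, this local statement suffices. I would then construct, for every $n$, a strong deformation retraction of $S_n\cap R(0,c+E_v)$ onto $S_n\cap R(0,c)$ that is compatible with the filtration $\{S_n\subset S_{n+1}\}_n$, hence yields a graded $\Z[U]$-module isomorphism on cohomology together with the corresponding isomorphism of graded roots.

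The key technical input will be a partial monotonicity of $w_0$: if $l\in L_{\geq 0}$ has $l_v>(Z_{coh,h})_v$, then $w_0(l)\geq w_0(l-E_v)$. The Hilbert part $\hh(l)\geq\hh(l-E_v)$ is automatic. For the $\hh^\circ$ part I would observe that under this hypothesis the cycles $\min\{l,Z_{coh,h}\}$ and $\min\{l-E_v,Z_{coh,h}\}$ coincide (both have $v$-entry equal to $(Z_{coh,h})_v$ and agree in every other coordinate), so Proposition~\ref{prop:cohcyc2}(c) forces $h^1(\calO_l(-r_h))=h^1(\calO_{l-E_v}(-r_h))$, whence $\hh^\circ(l)=\hh^\circ(l-E_v)$ and $w_0(l)-w_0(l-E_v)=\hh(l)-\hh(l-E_v)\geq 0$.

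With this monotonicity in hand, I would group the cubes of $R(0,c+E_v)\setminus R(0,c)$ into pairs: each ``vertical'' cube $\square_2=(l_0,I\cup\{v\})$ with $v\notin I$ and $(l_0)_v=c_v$, together with its top face $\square_1=(l_0+E_v,I)\subset\{l_v=c_v+1\}$ (the bottom face $(l_0,I)$ lying in $R(0,c)$). Applying the monotonicity vertex by vertex to $\square_1$ yields $w(\square_1)\geq w((l_0,I))$ and hence $w(\square_2)=w(\square_1)$. I would then define $H\colon R(0,c+E_v)\times[0,1]\to R(0,c+E_v)$ as the identity on $R(0,c)$ and as the obvious push-down along $-E_v$ inside each $\square_2$ and on its top face $\square_1$; the two formulas match along $\square_1\subset\square_2$, so $H$ is continuous. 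The homotopy keeps each $\square_2$ inside itself and drags each $\square_1$ into its enveloping $\square_2$, and the equality $w(\square_2)=w(\square_1)$ ensures that every sublevel set $S_n\cap R(0,c+E_v)$ is $H_t$-invariant, providing the desired strong deformation retraction onto $S_n\cap R(0,c)$.

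The main obstacle, and the reason the hypothesis $c\geq Z_{coh,h}$ enters essentially, is that $w_0=\hh-h^1(\calO_l(-r_h))$ is \emph{not} globally monotone: both summands change when a coordinate is increased, and without the cohomological-cycle condition the term $h^1(\calO_l(-r_h))$ can strictly increase, producing weights that move the wrong way for a retraction. The content of the argument is exactly that, once we are past $Z_{coh,h}$ in the $v$-direction, $h^1(\calO_l(-r_h))$ has already saturated and stops interfering, which is precisely what Proposition~\ref{prop:cohcyc2}(c) encodes.
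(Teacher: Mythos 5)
Your proof is correct and follows essentially the same route as the paper's: the key step in both is that for $l$ with $l_v$ strictly above the $v$-coordinate of $Z_{coh,h}$ one has $\min\{l,Z_{coh,h}\}=\min\{l-E_v,Z_{coh,h}\}$, so Proposition~\ref{prop:cohcyc2}(c) gives $h^1(\calO_l(-r_h))=h^1(\calO_{l-E_v}(-r_h))$ and hence $w_0(l)\geq w_0(l-E_v)$, after which a strong deformation retraction in the $E_v$-direction identifies the sublevel sets. The only cosmetic difference is that you compare $R(0,c)$ with $R(0,c+E_v)$ while the paper compares $R(0,c)$ with $R(0,c-E_v)$; the content is identical.
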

\begin{proof}
Fix some $c\geq Z_{coh,h}$ and choose $E_v\subset |c- Z_{coh,h}|$.
Then for any $l\in R(0,c)$ with
$l_v=c_v$ we have $\min\{l, Z_{coh,h}\}=\min\{l-E_v, Z_{coh,h}\}$.
Therefore, by \ref{rem:antmatroid}, $h^1(\calO_{l-E_v}(-r_h))=h^1(\calO_l(-r_h))$, thus
$w_0(l-E_v)\leq w_0(l)$. Then for any $n\in \Z$, a strong deformation retract in the direction $E_v$ realizes
a homotopy equivalence between the spaces $S_n\cap R(0,c)$ and  $S_n\cap R(0,c-E_v)$.
A natural retract $r:S_n\cap R(0,c)\to S_n\cap R(0,c-E_v)$ can be defined as follows (for notation see \ref{9complex}).
If $\square =(l,I)$ belongs to $ S_n\cap R(0,c-E_v)$ then $r$ on $\square$ is defined as the identity.
If $(l,I)\cap  R(0,c-E_v)=\emptyset$, then $l_v=c_v$, and  we set  $r(x)=x-E_v$. Else,
$\square =(l,I)$ satisfies $v\in I$ and $l_v=c_v-1$. Then we retract $(l,I)$ to $(l, I\setminus v)$ in the $v$--direction.
The strong deformation retract is defined similarly.
\end{proof}

\begin{corollary}\label{cor:veges}
(a) The graded root $\RR(R(0,c),w)$ satisfies $|\chic^{-1}(n)|=1$ for any $n\gg 0$.

(b) $\bH^*_{red}(R(0,c),w)$ is a finitely generated $\Z$-module (for any finite or infinite $c\geq Z_{coh}$).
\end{corollary}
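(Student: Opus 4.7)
The strategy is to invoke Lemma \ref{lem:INDEPAN} to reduce the computation to a finite rectangle, where boundedness of $w_0$ forces the cubical subcomplexes $S_n$ to stabilize to a contractible set for $n \gg 0$.

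Concretely, fix a finite $c \geq Z_{coh,h}$. Then $R(0,c)$ is a finite cubical complex, so $w_0$ attains only finitely many values; set $N := \max\{w_0(l) : l \in R(0,c)\}$. For every $n \geq N$ we have $S_n = R(0,c)$, which is a closed Euclidean box, hence convex and contractible. Therefore $H^0(S_n,\Z) = \Z$, $H^q(S_n,\Z) = 0$ for $q \geq 1$, and $\widetilde H^*(S_n,\Z) = 0$, whenever $n \geq N$. Part (a) follows immediately: $\chic^{-1}(n) = \pi_0(S_n)$ has cardinality one for every $n \geq N$. Part (b) also follows, since
\[
\bH^q_{red}(R(0,c),w) \,=\, \bigoplus_{m_w \,\leq\, n \,<\, N} \widetilde H^q(S_n, \Z)
\]
is a finite direct sum of cohomology groups of finite CW complexes, hence finitely generated over $\Z$.

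The case $c = \infty$ (so $R(0,c) = L_{\geq 0}$) is handled by exactly the deformation retract employed in the proof of Lemma \ref{lem:INDEPAN}, applied one coordinate at a time. Each sublevel set $S_n$ is already a finite complex by property \eqref{9weight} of $w_0$; Proposition \ref{prop:cohcyc2}(c) shows that outside $Z_{coh,h}$ the weight $w_0$ is invariant under $l \mapsto l - E_v$, so the retract in direction $E_v$ deformation-retracts $S_n$ onto $S_n \cap R(0, Z_{coh,h})$ without altering cohomology or graded-root structure. Iterating over the finitely many active coordinates reduces the problem to the finite rectangle $R(0, Z_{coh,h})$ treated above. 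The only mildly delicate point is this bookkeeping of the iterated retract, but the finite-sublevel-set property makes it routine.
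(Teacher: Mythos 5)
Your proposal is correct and follows essentially the same route as the paper: reduce to a finite rectangle via Lemma \ref{lem:INDEPAN}, observe that $S_n=R(0,c)$ is contractible for $n\gg 0$, and conclude both parts. (One small imprecision: the retract in Lemma \ref{lem:INDEPAN} uses only $w_0(l-E_v)\leq w_0(l)$ outside $Z_{coh,h}$, not invariance of $w_0$, but this does not affect the argument.)
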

\begin{proof}
For any $n\gg 0$ we have $R(0,c)=S_n$, hence $S_n$ is contractible for such $n$.
\end{proof}

\subsection{Independence  of $\phi$}  Rewrite the $c$--independent module   $\bH^*(R(0,c),w)$ as $\bH^*_{an,h}(\phi)$, and the garded root as $\RR_{an, h}(\phi)$.
\begin{theorem}\label{th:annlattinda} The graded $\Z[U]$--module
$\bH^*_{an,h}(\phi)$  and the graded root  $\RR_{an, h}(\phi)$
are  independent of the choice of the resolution $\phi$.
\end{theorem}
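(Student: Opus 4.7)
The plan is to reduce the problem to the standard check of invariance under a single blow-up, and then to apply a deformation-retract argument analogous to the one used in Lemma \ref{lem:INDEPAN}. Since any two good resolutions of $(X,o)$ are dominated by a common good resolution obtained from either of them by a finite sequence of blow-ups of points (smooth points of $E$ or intersection points), it suffices to prove that if $\phi' = \phi\circ\pi$, where $\pi:\tX'\to \tX$ is the blow-up of a single point $p\in E$, then there is a canonical isomorphism $\bH^*_{an,h}(\phi)\cong \bH^*_{an,h}(\phi')$ of graded $\Z[U]$-modules, and similarly for the graded roots.

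Next I would set up the comparison of the two lattices. The pullback $\pi^*$ gives a $\Q$-linear embedding $L'\hookrightarrow L'(\phi')$, and $L(\phi')$ is obtained from $L(\phi)$ by adjoining the new exceptional generator $E_0$ (with multiplicity $1$ or $2$ in $\pi^*E_v$, depending on whether $p$ is a smooth or a double point of $E$). The group $H\simeq L'/L\simeq L'(\phi')/L(\phi')$ is preserved (it is a topological invariant of the link), and by the theory of natural line bundles (Definition \ref{def:natlinebd} together with \cite{NGr}) the representative $r_h'\in L'(\phi')$ is related to $r_h\in L'$ via $\pi^* r_h = r_h' + \mu E_0$ for an explicit $\mu\in [0,1)\cap \Q$, and $\pi_*\calO_{\tX'}(-r_h')\simeq \calO_{\tX}(-r_h)$. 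The crucial technical input is then the weight-function compatibility: one shows using the projection formula, the vanishing $R^1\pi_*=0$ (since $\pi$ contracts a rational $(-1)$-curve), Theorem \ref{th:3.9}, and the computation of Hilbert values via divisorial orders on $\calO_{X_a,o}$, that the Hilbert function $\hh'$ on the new lattice satisfies $\hh'(\pi^*l) = \hh(l)$ for $l\in L_{\geq 0}$, and that moving in the $E_0$-direction does not strictly increase $w_0'$ outside a bounded region; the corresponding statement for the antimatroid piece $\hh'^{\,\circ}$ follows from Proposition \ref{prop:cohcyc2} applied to $\pi^*$-pullbacks.

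Given this compatibility, a strong deformation retract in the $E_0$-direction, exactly as in the proof of Lemma \ref{lem:INDEPAN}, produces compatible homotopy equivalences $S_n(w')\simeq S_n(w)$ for every $n$, with the retract commuting with the inclusions $S_n\subset S_{n+1}$. This yields the desired isomorphism of graded $\Z[U]$-modules, and the same retract induces the isomorphism of the graded roots $\RR_{an,h}(\phi')\cong \RR_{an,h}(\phi)$. The already-proved independence of the auxiliary parameter $c$ (Lemma \ref{lem:INDEPAN}) allows us to enlarge $c$ as necessary during the retract to cover the support of the new generator $E_0$.

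The hard part will be the bookkeeping in the middle step: tracking the precise behavior of $r_h$, of the cohomological cycle $Z_{coh,h}$, and of both pieces $\hh$ and $h^1(\calO_l(-r_h))$ of $w_0$ under the blow-up, particularly when $p$ is an intersection point of two components (so the new generator $E_0$ interacts nontrivially with both $E_v$ and $E_w$). One must verify that both the matroid inequality for $\hh$ and the opposite matroid inequality of Proposition \ref{prop:cohcyc2}(b) continue to hold for the new pair $(\hh',\,\hh'^{\,\circ})$ on the enlarged lattice, so that the combinatorial retract in the $E_0$-direction is licit and weight non-increasing, and that its effect matches precisely with contracting $\tX'$ back to $\tX$ via $\pi$.
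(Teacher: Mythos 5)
Your overall strategy --- reduce to a single blow-up, compare the two weight functions via $\pi^*$, and collapse the new $E_{new}$-direction --- is the same as the paper's, and your treatment of the smooth-point case would go through essentially as you describe. But there is a genuine gap in the intersection-point case, and it sits exactly in the step you defer to ``bookkeeping.'' A small but symptomatic error first: the relation between $r_h$ and $r_h'$ is not $\pi^*r_h=r_h'+\mu E_{new}$ with $\mu\in[0,1)$; one has $\mu\in\{0,1\}$ according to whether $a_{v_0}+a_{v_1}<1$ or not (Lemma \ref{lem:r_h2}), and this dichotomy forces two separate runs of the entire argument (Cases B.I and B.II in the paper), with all $E_{new}$-coefficients shifted by one in the second.

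More seriously: once the monotonicity profile (\ref{eq:HOM2a}) is established (decreasing, then constant on an interval of length one, then increasing in the $E_{new}$-direction), the projection $\pi_{\R}:S_n(\phi')\to S_n(\phi)$ is well defined, but its surjectivity on cubes $(x,I)$ with $\{v_0,v_1\}\subset I$ is \emph{not} a formal consequence of the matroid inequalities continuing to hold (they do hold automatically, being general properties of filtrations and of $h^1$). For such a cube, $\pi^*(x+E_{I'})$ jumps by $0$, $1$ or $2$ in the $E_{new}$-coordinate depending on $|I'\cap\{v_0,v_1\}|$, so each of the two candidate lifts $(\pi^*x,I)$ and $(\pi^*x+E_{new},I)$ is a priori obstructed at one vertex. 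The paper's resolution is a genuine trichotomy: either $\hh'$ is constant across the relevant $E_{new}$-step (then the matroid inequality propagates this to all vertices and the upper lift lands in $S_n(\phi')$), or $h^1$ is constant (then the opposite matroid inequality of Proposition \ref{prop:cohcyc2} makes the lower lift work), or both differences are strict --- and in that last case one multiplies a section $s_1$ of $\calO_{\tX'}(-r_h')$ with a twisted $2$-form $s_2$ to produce, via Laufer duality (\ref{eq:duality}), the strict inequality $h^1(\calO_{E_I+E_{new}})>h^1(\calO_{E_I})$, which is impossible when the link is a rational homology sphere. This duality argument is the heart of the proof and the place where the $\Q HS^3$ hypothesis enters; your proposal contains no substitute for it, and without it the map $S_n(\phi')\to S_n(\phi)$ cannot be shown to be onto. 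Note also that the final homotopy equivalence is obtained by a quasifibration/\v{C}ech covering argument (the fibers are intervals of varying length), not by the boundary deformation retract of Lemma \ref{lem:INDEPAN}.
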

\begin{proof}
We need to verify that $\bH^*_{an,h}(\phi)$ and $\RR_{an, h}(\phi)$ are
 stable with respect to blow up of a point. We
discuss two cases according to the position of the point with the singular locus of  $E$.

\underline{ {\bf Case A.}} We fix a resolution $\phi$, and denote the blow up of a point of $E_{v_0}\setminus \cup_{w\not= v_0}E_w$
by $\pi$, and set $\phi':= \phi\circ \pi$.
Let $\Gamma$ and $\Gamma'$ be the corresponding graphs, $L(\Gamma),\ L(\Gamma')$  the lattices
and  $\, (\,,\,), \ (\,,\,)'$ the  intersection forms.

 We denote the new
$(-1)$-vertex of $\Gamma'$ by $E_{new}$.  In our notations we identify  $E_v\in L$
with its strict transform in $L(\Gamma')$. We have the next natural morphisms:
 $\pi_*:L(\Gamma')\to
L(\Gamma)$ defined by $\pi_*(\sum x_vE_v+x_{new}E_{new})=\sum
x_vE_v$, and $\pi^*:L(\Gamma)\to L(\Gamma') $  defined by
$\pi^*(\sum x_vE_v)=\sum x_vE_v +x_{v_0}E_{new}$. They can be extended by similar formulae to
rational cycles too, and $\pi^*(L'(\Gamma)\subset L'(\Gamma')$.  They satisfy the `projection formula'
$(\pi^*x,x')'=(x,\pi_*x')$.  This shows that
$(\pi^*x,\pi^*y)'=(x,y)$ and $(\pi^*x,E_{new})'=0$ for any
$x,y\in L'(\Gamma)$.
Associated with $\phi$,  let $\hh$, $\hh^\circ$  be the functions defined above,
  $w_0$ the analytic weight and
 $S_n(\phi)=\cup\{\square\,:\, w(\square)\leq n\}$. We use similar notations  $\hh'$, $(\hh^{\circ})'$, $w_0'$ and
 $S_n(\phi')$ for  $\phi'$. Let also $r_h\in L'(\Gamma)$ and $r_h'\in L'(\Gamma')$ be the universal cycles associated with $h\in H$.

\begin{lemma}\label{lem:r_h} $\pi^*(r_h)=r_h'$.\end{lemma}
\begin{proof}
The composition $\varphi_{\tX}: {\rm Div}(\tX)\to {\rm Pic}(\tX)\stackrel{c_1}{\longrightarrow} L'\to L'/L
=H$ is realized by $D\mapsto [D\cap \partial \tX]$ (for $\tX$ conveniently small and
$\partial \tX=M$). If $D'\in{\rm Div(\tX'})$
is the strict transform of $D\in{\rm Div}(\tX)$ then $\varphi_{\tX'}(D')=\varphi_{\tX}(D)$ in $H$.
Therefore, if we chose $x\in L'(\Gamma)$ and $x'\in L'(\Gamma')$ such that $D+x$ and $D'+x'$ are
numerically trivial in $H_1(\tX,\partial \tX,\Q)$ (i.e.
 $(D+x,E_v)_{\tX}=0$ for all $v\in\calv$, and similarly for $D'+x'$) then $x'=\pi^*x$. Hence, in the two resolutions,
$x\in L'(\Gamma)$ and $\pi^*x\in L'(\Gamma')$ have the  same class  in $H$. On the other hand, clearly,
 all the $E_v$--entries of
$\pi^*r_h$ are in $[0,1)$.
\end{proof}
\begin{lemma}\label{lem:Leray}
$H^*(\tX', \pi^*\calL)=H^*(\tX, \calL)$ and  $H^*(\pi^*x, \pi^*\calL)=H^*(x, \calL)$
 for any line bundle $\calL\in {\rm Pic}(\tX)$  and $x\in L(\Gamma)$.
\end{lemma}
\begin{proof}
The first identity follows from Leray spectral sequence, the second one from the first via
exact sequences of type $0\to \calL(-x)\to\calL\to \calL|_x\to 0$.
\end{proof}

\bekezdes \label{bek:1}  For $a\leq 0$ and $x\in R$  we claim that
    $H^0(\tX', \calO_{\tX'}(-\pi^*x-\pi^*r_h-aE_{new}))=H^0(\tX', \calO_{\tX'}(-\pi^*x-\pi^*r_h))$.
   Indeed, take the exact sequence of sheaves
   $$0\to \calO_{\tX'}(-\pi^*x-\pi^*r_h)\to \calO_{\tX'}(-\pi^*x-\pi^*r_h-aE_{new})\to \calO_{-aE_{new}}(-\pi^*x-\pi^*r_h -aE_{new})\to 0$$
   and use that $h^0(\calO_l(l)\otimes \calL)=0$ for any $l>0$ and line bundle $\calL$ with
    $(c_1\calL,E_v)=0$ for any $E_v\in |l|$.
   This last vanishing follows from the Grauert--Riemenschneider Theorem via Serre duality.
Therefore (using Lemma \ref{lem:Leray} too)  $\hh'(\pi^*x+aE_{new})$ equals
$$\dim \frac{H^0(\calO_{\tX'}(-\pi^*r_h))}{
H^0(\calO_{\tX'}(-\pi^*x-\pi^*r_h-aE_{new}))}=
\dim \frac{H^0(\calO_{\tX'}(-\pi^*r_h))}{
H^0(\calO_{\tX'}(-\pi^*(x+r_h)))}=
\dim \frac{H^0(\calO_{\tX}(-r_h))}{
H^0(\calO_{\tX}(-x-r_h))}=\hh(x).$$
Hence
\begin{equation}\label{eq:MON1}
 \hh'(\pi^*x+aE_{new}) \ \left\{ \begin{array}{l}
 = \hh(x) \ \mbox{ for any $a\leq 0$} \\
 \mbox{is increasing for $a\geq 0$}.\end{array}\right.
\end{equation}
\bekezdes \label{bek:2}
Using the exact sequence $$0\to \calO_{aE_{new}}(-\pi^*x-\pi^*r_h)\to \calO_{\pi^*x+aE_{new}}(-r_h')\to \calO_{\pi^*x}(-r_h')\to 0$$ and
Lipman's vanishing $h^1(\calO_{aE_{new}}(-\pi^*x-\pi^*r_h))=0$ from \ref{prop:VAN0}, we get that
$h^1(\calO_{\pi^*x+aE_{new}}(-r_h'))=h^1(\calO_{\pi^*x}(-r_h'))$ for any $a\geq 0$. Furthermore, from
$$0\to \calO_{E_{new}}(-\pi^*r_h-\pi^*x +E_{new})\to \calO_{\pi^*x}(-r_h')\to \calO_{\pi^*x-E_{new}}(-r_h')\to 0$$
we get that $h^1(\calO_{\pi^*x-E_{new}}(-r_h'))=h^1(\calO_{\pi^*x}(-r_h'))$ too. On the other hand,
since $\pi^*(r_h)=r_h'$, by Lemma \ref{lem:Leray},
$h^1(\calO_{\pi^*x}(-r_h'))=h^1(\calO_x(-r_h))$. Therefore,
 \begin{equation}\label{eq:MON2}
 h^1(\calO_{\pi^*x+aE_{new}}(-r_h')) \ \left\{ \begin{array}{l}
  \mbox{is increasing  for $a\leq  -1$}, \\
= h^1(\calO_x(-r_h)) \ \mbox{ for any $a\geq  -1$}. \\
\end{array}\right.
\end{equation}
These combined provide
\begin{equation}\label{eq:HOM2a}
a\mapsto w_0'(\pi^*x+aE_{new}) \ \left\{ \begin{array}{l}
\mbox{is  decreasing for $a\leq  -1$},\\
 = w_0(x) \ \mbox{ for  $a= -1$ and $a=0$,} \\
 \mbox{is increasing for $a\geq 0$}.\end{array}\right.
\end{equation}
Recall that we can compute $\bH^*_{an,h}(\phi)$ using the cube
$R(0,c)$ with $c\geq Z_{coh,h}(\phi)$. By Lemma \ref{lem:Leray} we obtain that
$\pi^*c\geq Z_{coh,h}(\phi')$,
hence $\bH^*_{an,h}(\phi')$ can be computed in $R(0, \pi^*c)$.
But we can take $c=\infty$ as well.

Furthermore, if $w_0'(\pi^*x+aE_{new})\leq n$, then
$w_0(x)\leq n$ too. In particular, the projection $\pi_{\R}$ in the direction of $E_{new}$
induces a well-defined map $\pi_{\R}:S_n(\phi')\to S_n(\phi)$.
We claim that this is a   homotopy equivalence  (with all fibers non-empty and contractible).
\bekezdes \label{bek:proof1}
We proceed in two steps.
First we prove that $\pi_{\R}:S_n(\phi')\to S_n(\phi)$ is onto.


Consider a zero dimensional cube (i.e. lattice point) $x\in S_n(\phi)$. Then $w_0(x)\leq n$. But then $w_0'(\pi^*x)
=w_0(x)\leq n$ too, hence $\pi^*(x)\in S_n(\phi')$ and $x=\pi_{\R}(\pi^*x)\in {\rm im}(\pi_{\R})$.

Next, take  a cube $(x,I)\subset  S_n(\phi)$ ($I\subset \cV$). This means that
$w_0(x+E_{I'})\leq n$ for any $I'\subset I$. But
\begin{equation}\label{eq:eps}
\pi^*(x+E_{I'})=\pi^*x+ E_{I'}+\epsilon\cdot E_{new},
\end{equation}
where $\epsilon=0$ if $v_0\not \in I'$ and $\epsilon =1$ otherwise. Hence
\begin{equation}\label{eq:eps2}
w_0'(\pi^*x+E_{I'}) =w_0'(\pi^*(x+E_{I'})-\epsilon E_{new})\stackrel {(\ref{eq:HOM2a})}{=}
w_0(x+E_{I'})\leq n.
\end{equation}
Therefore $(\pi^*x,I)\in S_n(\phi')$ and $\pi_{\R}$ projects $(\pi^*x, I)$  isomorphically  onto $(x,I)$.

Next, we show that $\pi_{\R}$ is in fact a homotopy equivalence.
In order to prove this fact it is enough to verify that if
$\square\in S_{n}(\phi)$ and $\square ^\circ$ denotes its relative interior,
then $\pi_{\R}^{-1}(\square^\circ) \cap S_{n}(\phi')$ is contractible.

Let us start again with a lattice point $x\in S_n(\phi)$. Then $\pi_{\R}^{-1}(x)\cap S_n(\phi')$ is a real  interval
(whose end-points are lattice points, considered in the real line of the $E_{new}$ coordinate).
Let us denote it by $\cI(x)$. Now, if $\square=(x,I)$, then we have to show that
all the intervals $\cI(x+E_{I'})$ associated with all the subsets $I'\subset I$ have a common
lattice point. But this is exactly what we verified above: the $E_{new}$ coordinate of $\pi^*(x)$ is such a common
point. Therefore, $\pi_{\R}^{-1}(\square ^\circ)\cap S_n(\phi')$
has a deformation retract (in the $E_{new}$ direction)
to $(\pi^*x, I)^\circ$.

For any $l\in L$ let  $N(l)\subset \R^s$ denote the union of all cubes which have $l$ as one of their vertices.
Let $U(l)$ be its interior. Write $U_n(l):=U(l)\cap S_n(\phi)$. If $l\in S_n(\phi)$ then $U_n(l)$ is a contractible
neighbourhood of $l$ in $S_n(\phi)$. Also,  $S_n(\phi)$ is covered by $\{U_n(l)\}_l$.
Moreover, $\pi_{\R}^{-1}(U_n(l))$ has the homotopy type of $\pi_{\R}^{-1}(l)$, hence it is contractible.
More generally, for any cube $\square$,
$$\pi_{\R}^{-1}(\cap _{\mbox{$v$ vertex of $\square$}} U_n(l)) \sim \pi_{\R}^{-1}(\square ^\circ)$$
which is contractible by the above discussion. Since all the intersections of $U_n(l)$'s are of these type,
we get that the inverse image of any intersection is contractible. Hence by \v{C}ech covering
(or Leray spectral sequence) argument, $\pi_{\R}$
induces an isomorphism
$H^*(S_n(\phi'),\Z)=H^*(S_n(\phi),\Z)$. In fact, this already shows that $\bH^*_{an,h}(\phi')=\bH^*_{an,h}(\phi)$.
By the identification of the connected components of $S_n(\phi)$ and $S_n(\phi')$ we also have
$\RR_{an,h}(\phi')=\RR_{an,h}(\phi)$. Note that compatibility with the $U$--action also follows from the corresponding inclusions of the $S_n$--spaces.

In order to prove the homotopy equivalence,
one can use   quasifibration, defined in  \cite{DoldThom};
 see also \cite{DadNem}, e.g. the relevant Theorem 6.1.5.
Since
$\pi_{\R}:S_{n}(\phi')\to  S_{n}(\phi)$  is a quasifibration,
and  all the fibers are contractible, the homotopy equivalence follows.

\bekezdes \label{bek:proof2} \underline{{\bf Case B.}} Assume that
we blow up an intersection point $E_{v_0}\cap E_{v_1}$.
The proof  starts very similarly, however at some point
there are two  major differences, hence  we need several additional arguments.

With very similar notation, in this case we have (define) $\pi^*(\sum_vx_vE_v)=\sum_v x_vE_v+(x_{v_0}+
x_{v_1})E_{new}$. Then the strategy is the same as above in Case A, but two
differences appear: the first one is related with $\pi^*r_h$:  Lemma \ref{lem:r_h} is not always true.
The second one is  related with $\pi^*E_{I'}$ in (\ref{eq:eps}).

Let us analyse the analogue of Lemma \ref{lem:r_h}. By the very same proof we have the following
\begin{lemma}\label{lem:r_h2}
Write $r_h$ as $\sum_va_vE_v$ for some $a_v\in[0,1)$. Then $r_h'=\pi^*r_v$ if and only if $a_{v_0}+a_{v_1}<1$. Otherwise
$r_h'=\pi^*r_v-E_{new}$.
\end{lemma}
We divide the proof of Case B in two parts, according to the two cases of Lemma \ref{lem:r_h2}.

\vspace{2mm}

\underline{{\bf Case B.I.}} Assume that $r_h'=\pi^*r_h$.

Then all the statements of Case A  from \ref{bek:1} and \ref{bek:2}  remain valid (including
the key (\ref{eq:HOM2a})).
However, \ref{bek:proof1} should be modified.
The modifications start   in (\ref{eq:eps}). Indeed, in this case
\begin{equation}\label{eq:eps3}
\pi^*(x+E_{I'})=\pi^*x+ E_{I'}+\epsilon\cdot E_{new},
\end{equation}
where $\epsilon$ is the cardinality of $I'\cap \{v_0,v_1\}$. This can be 0, 1 or 2. Therefore, if
$\{v_0,v_1\}\not\subset I$, then $\epsilon \in \{0,1\} $ for any $I'$, hence for such cubes $(x,I)$
all the arguments of \ref{bek:proof1} work.

\bekezdes\label{bek:3}
Assume in the sequel that $\{v_0, v_1\}\subset I$. Write $J=I\setminus \{v_0,v_1\}$.

 There are two cube--candidates  of $L(\Gamma')\otimes \R$ which might
cover the cube $(x,I)\in S_n(\phi)$. One of them is $(\pi^*x,I)$ (as above).  However,  by  (\ref{eq:HOM2a})
the lattice points $\pi^*(x+E_I)=\pi^*x+E_I+2E_{new}$ and $\pi^*(x+E_I)-E_{new}=\pi^*x+E_I+E_{new}$ are in
$S_n(\phi')$, but the vertex $\pi^*x+E_I$ of $(\pi^*x,I)$ is not necessarily in $S_n(\phi')$.

Another candidate is $(\pi^*x+E_{new},I)$, but here again
$\pi^*x$ and $\pi^*x-E_{new}$ are in $S_n(\phi')$  but $\pi^*x+E_{new}$ might be not.
So both cubes a priori are obstructed if we apply  merely (\ref{eq:HOM2a}).

Next we analyze these obstructions with more details and we show that one of the candidate cubes works.

\bekezdes\label{bek:A} {\bf Case 1.}
 Assume that $w_0'(\pi^*x)=w_0'(\pi^*x+E_{new})$. Then by (\ref{eq:MON1}) and (\ref{eq:MON2})
 we obtain that $\hh'(\pi^*x)=\hh'(\pi^*x+E_{new})$.  By the matroid rank inequality of $\hh'$
 we get that $\hh'(\pi^*x+E_{J'})=\hh'(\pi^*x+E_{J'}+E_{new})$ for any $J'\subset J$.
 This again via (\ref{eq:MON1}) and (\ref{eq:MON2})
 shows that $w_0'(\pi^*x+E_{J'})=w_0'(\pi^*x+E_{J'}+E_{new})$.
 In particular,
 $$w_0'(\pi^*x+E_{J'}+E_{new})=w_0'(\pi^*x+E_{J'})=w_0'(\pi^*(x+E_{J'}))=w_0(x+E_{J'})\leq n.$$
That is, the  vertices of type $\pi^*x+E_{J'}+E_{new}$ of $(\pi^*x+E_{new},I)$
are in $S_n(\phi')$. For all other vertices we already know this fact (use (\ref{eq:HOM2a})).
Hence $(\pi^*x+E_{new},I)$ is in $S_n(\phi')$ and it projects via $\pi_{\R}$ bijectively to $(x,I)$.
Furthermore, $\pi_{\R}^{-1}(x,I)^\circ \cap S_n(\phi')$ admits a deformation retract to
 $(\pi^*x+E_{new},I)^\circ $, hence it is contractible.

\bekezdes \label{bek:B} {\bf Case 2.}
 Assume that $w_0'(\pi^*x+E_I)=w_0'(\pi^*x+E_I+E_{new})$,
 or $w_0'(\pi^*(x+E_I)-2E_{new})=w_0'(\pi^*(x+E_I)-E_{new})$.
  Then by (\ref{eq:MON1}) and (\ref{eq:MON2})
 we obtain that $h^1(\calO_{\pi^*x+E_I}(-r_h'))=h^1(\calO_{\pi^*x+E_I+E_{new}}(-r_h'))$.  By the opposite
 matroid rank inequality of $h^1(\calO_{\tX'}(-r_h'))$ and (\ref{eq:MON1}) and (\ref{eq:MON2}) again we obtain that
 $w_0'(\pi^*x+E_I-E_{J'})=w_0'(\pi^*x+E_I-E_{J'}+E_{new})$.
 In particular,
 $$w_0'(\pi^*x+E_I-E_{J'})=w_0'(\pi^*x+E_I-E_{J'}+E_{new})=w_0'(\pi^*(x+E_I-E_{J'})-E_{new})=w_0(x+E_I-E_{J'})\leq n.$$
That is, the  vertices of type $\pi^*x+E_I-E_{J'}$ of $(\pi^*x,I)$
are in $S_n(\phi')$. For all other vertices we already know this fact (use (\ref{eq:HOM2a})).
Hence $(\pi^*x,I)$ is in $S_n(\phi')$ and it projects via $\pi_{\R}$ bijectively to $(x,I)$.
Furthermore, $\pi_{\R}^{-1}(x,I)^\circ \cap S_n(\phi')$ admits a deformation retract to
 $(\pi^*x,I)^\circ $, hence it is contractible.

\bekezdes \label{bek:C} {\bf Case 3.} Assume that the assumptions from {\bf Case 1} and {\bf Case 2} do not hold.
This means that
$$\left\{ \begin{array}{l}
\hh'(\pi^*x)<\hh'(\pi^*x+E_{new}), \ \mbox{and} \\
h^1(\calO_{\pi^*x+E_I}(-r_h'))< h^1(\calO_{\pi^*x +E_I+E_{new}}(-r_h')).\end{array}\right.$$
This reads as follows (cf. (\ref{eq:duality})
$$\left\{ \begin{array}{l}
(a) \ \ H^0(\calO_{\tX'}(-\pi^*x-r_h' -E_{new}) \subsetneq  H^0(\calO_{\tX'}(-\pi^*x-r_h'), \ \mbox{and} \\
(b) \ \ H^0(\tX', \Omega^2_{\tX'}(\pi^* x +r_h'+E_I)) \subsetneq
H^0(\tX', \Omega^2_{\tX'}(\pi^* x +r_h'+E_I+E_{new})).
\end{array}\right.$$
Part {\it (a)}  means the following: there exists a global section $s_1\in H^0(\tX', \calO_{\tX'}(-r_h'))$
such that ${\rm div}_{E'}(s_1)\geq \pi^*x$, and in this inequality the $E_{new}$--coordinate entries are equal.
By part {\it (b)}, there exists a global section  $s_2\in H^0(\tX', \Omega^2_{\tX'}(r'_h)) $ such that
${\rm div}_{E'}(s_2)\geq -\pi^*x-E_I-E_{new}$ and the $E_{new}$--coordinate entries are equal.

Therefore, the global section  $s_1s_2\in H^0(\tX', \Omega^2_{\tX'})$ has the property that
${\rm div}_{E'}(s_1s_2)\geq -E_I-E_{new}$ with equality at the $E_{new}$ coordinate.
In particular,  by duality (\ref{eq:duality}) we obtain that  in $\tX'$ the following
strict inequality  holds:
\begin{equation}\label{eq:ROSSZ}
h^1(\calO_{E_I+E_{new}})>h^1(\calO_{E_I}) \ \ (\cV'=\cV\cup\{new\}, \ I\subset \cV).\end{equation}
But if the link is a rational homology sphere then both left and right hand sides are zero,
i.e.  this strict inequality cannot happen.

\bekezdes In particular, for any $I\subset \cV$ either $\{v_0,v_1\}\not\subset  I$, or in the opposite case
either\, {\bf Case 1} or {\bf Case 2} applies.  Hence,
in any case,
$\pi_{\R}^{-1}(x,I)^\circ \cap S_n(\phi')$ is contractible. Therefore, $S_n(\phi)$ and $S_n(\phi')$
have the same homotopy type by the argument from the end of \ref{bek:proof1}.

\vspace{2mm}

\underline{{\bf Case B.II.}} Assume that $r_h'=\pi^*r_h-E_{new}$.

In turns out that this case is very similar to the case B.I: compared with that case
all the $E_{new}$--coefficients should be shifted by one. However, we have to go through all the
verifications  step by step.

Firstly, for $a\leq 1$,
$$\hh'(\pi^*x+aE_{new})=
\dim \frac{H^0(\calO_{\tX'}(-\pi^*r_h+E_{new}))}{
H^0(\calO_{\tX'}(-\pi^*x-\pi^*r_h-aE_{new}+E_{new}))}.$$
Since
$H^0(\calO_{\tX'}(-\pi^*r_h+E_{new}))=H^0(\calO_{\tX'}(-\pi^*r_h))$,
and for $a\leq 1$ (by \ref{bek:1})
$$H^0(\calO_{\tX'}(-\pi^*x-\pi^*r_h-aE_{new}+E_{new}))=
H^0(\calO_{\tX'}(-\pi^*x-\pi^*r_h))$$
we get
\begin{equation}\label{eq:MON1B}
 \hh'(\pi^*x+aE_{new}) \ \left\{ \begin{array}{l}
 = \hh(x) \ \mbox{ for any $a\leq 1$} \\
 \mbox{is increasing for $a\geq 1$}.\end{array}\right.
\end{equation}
Next, for $a\geq 0$,  in the cohomology  exact sequence of
 $$0\to \calO_{aE_{new}}(-\pi^*x-\pi^*r_h+E_{new})\to \calO_{\pi^*x+aE_{new}}(-r_h')\to \calO_{\pi^*x}(-r_h')\to 0$$
 one has $h^1(\calO_{aE_{new}}(-\pi^*x-\pi^*r_h+E_{new}))=0$. Indeed, since ${\rm Pic}^0(aE_{new})=0$,
 $h^1(\calO_{aE_{new}}(-\pi^*x-\pi^*r_h+E_{new}))=h^1(\calO_{aE_{new}}(E_{new}))$, whose vanishing follows by induction on $a$.
 Therefore, for $a\geq 0$, \begin{equation}\label{eq:B22}
 h^1(\calO_{\pi^*x+aE_{new}}(-r_h'))=h^1( \calO_{\pi^*x}(-r_h')).\end{equation}
 On the other hand, from the exact sequence
 $$0\to \calO_{\pi^*x}(-\pi^*r_h)\to \calO_{\pi^*x+E_{new}}(-\pi^*r_h+E_{new})\to \calO_{E_{new}}(-\pi^*r_h+E_{new})\to 0$$
 we obtain $h^1(\calO_{\pi^*x}(-\pi^*r_h))=h^1(\calO_{\pi^*x+E_{new}}(-\pi^*r_h+E_{new}))$, which equals
$h^1(\calO_{\pi^*x}(-r_h'))$ by (\ref{eq:B22}). Hence
  \begin{equation}\label{eq:MON2B}
 h^1(\calO_{\pi^*x+aE_{new}}(-r_h')) \ \left\{ \begin{array}{l}
  \mbox{is increasing  for $a\leq  0$}, \\
= h^1(\calO_x(-r_h)) \ \mbox{ for any $a\geq  0$}. \\
\end{array}\right.
\end{equation}
These combined provide
\begin{equation}\label{eq:HOM2aB}
a\mapsto w_0'(\pi^*x+aE_{new}) \ \left\{ \begin{array}{l}
\mbox{is  decreasing for $a\leq  0$},\\
 = w_0(x) \ \mbox{ for  $a= 0$ and $a=1$,} \\
 \mbox{is increasing for $a\geq 1$}.\end{array}\right.
\end{equation}
 Here it is convenient is to take $c=\infty$, hence we compare the two infinite rectangles (first quadrants).

 Again, if $w_0'(\pi^*x+aE_{new})\leq n$, then
$w_0(x)\leq n$ too. Hence  the projection $\pi_{\R}$ in the direction of $E_{new}$
induces a  map $\pi_{\R}:S_n(\phi')\to S_n(\phi)$.
We need to prove that this is a   homotopy equivalence  with all fibers non-empty and contractible.

First we verify that  $\pi_{\R}:S_n(\phi')\to S_n(\phi)$ is onto.

If $x\in S_n(\phi)$ then $w_0(x)\leq n$, hence  by (\ref{eq:HOM2aB}) $w_0'(\pi^*x)=w_0(x)\leq n$ too, hence
$x\in {\rm im}(\pi_{\R})$.

If $(x,I)\subset S_n(\phi)$ $(I\subset \calv)$ then $w_0(x+E_{I'})\leq n$ for any $I'\subset I$. For such $I'$
 we have the identity (\ref{eq:eps3}) with $\epsilon =|I'\cap \{v_0,v_1\}|\subset \{0,1,2\}$.

 Assume that $\{v_0,v_1\}\subsetneq I$. Then we claim that $(\pi^*x+E_{new}, I)$ is in $S_n(\phi')$ and it projects isomorphically onto $(x,I)$.  Indeed, in this case $\epsilon\in\{0,1\}$ and by (\ref{eq:HOM2aB})
 $$w_0'(\pi^*x+E_{new}+E_{I'})=w_0'(\pi^*(x+E_{I'})-\epsilon E_{new}+E_{new})=w_0(x+E_{I'})\leq n.$$

 Hence in the sequel we assume that $\{v_0,v_1\}\subset I$.  Then we proceed as in \ref{bek:3}. Again, there are two
 cube--candidates to lift $(x,I)$.

 One of them is $(\pi^*x+E_{new},I)$. However, though $\pi^*x+E_I+2E_{new}$ and  $\pi^*x+E_I+3E_{new}$ are in $S_n(\phi')$ but
 the vertex   $\pi^*x+E_I+E_{new}$ of  $(\pi^*x+E_{new},I)$  might not be part of $S_n(\phi')$.

   The second candidate is $(\pi^*x+2E_{new},I)$, but this case is also obstructed:
   $\pi^*x$ and $\pi^*x+E_{new}$ are in $S_n(\phi')$ but the vertex $\pi^*x+2E_{new}$ of
    $(\pi^*x+2E_{new},I)$ not necessarily.

Hence, again we have to analyse three case, the analogues of \ref{bek:A}, \ref{bek:B} and \ref{bek:C}.

\noindent {\bf Case 1.} We assume that $w_0'(\pi^*x+E_{new})= w_0'(\pi^*x+2E_{new})$. Then similarly as in  \ref{bek:A}
one can show that  $(\pi^*x+2E_{new},I)\subset S_n(\phi')$.

\noindent {\bf Case 2.}  We assume that $w_0'(\pi^*x+E_I+ E_{new})= w_0'(\pi^*x+E_I+2E_{new})$. Then similarly as in  \ref{bek:B}
one can show that  $(\pi^*x+E_{new},I)\subset S_n(\phi')$.

 \noindent {\bf Case 3.} Finally we show that either Case  1 or Case 2 must hold. Indeed, if
 not, that is, if
 $$\left\{ \begin{array}{l}
\hh'(\pi^*x+E_{new})<\hh'(\pi^*x+2E_{new}), \ \mbox{and} \\
h^1(\calO_{\pi^*x+E_I+E_{new}}(-r_h'))< h^1(\calO_{\pi^*x +E_I+2E_{new}}(-r_h')),\end{array}\right.$$
 then we get a contradiction similarly as in \ref{bek:C}.

 \vspace{2mm}

 Having the surjectivity  $\pi_{\R}:S_n(\phi')\to S_n(\phi)$,
 the  homotopy equivalence  is proved as in the previous cases.
\end{proof}

\begin{definition}
In the sequel we will use for $\bH^*_{an,h}(\phi)$ the notation  $\bH^*_{an,h}(X,o)$ as well.
It is called the {\it analytic lattice cohomology of $(X,o)$} associated with $h\in H$.
We also set $\bH^*_{an}(X,o):=\oplus _{h\in H}\bH^*_{an,h}(X,o)$. It is called the
{\it equivariant analytic lattice cohomology of $(X,o)$}.

We adopt the notation $\RR_{an,h}(X,o)$ for the graded root as well.%
\end{definition}


\begin{remark}
In order to run the equivariant version (indexed by $H$) we need the existence of the
universal abelian covering, hence we need the finiteness of $H_1(M,\Z)$, i.e.
we need to require that  the link is a rational homology sphere. On the other hand,  if we wish to
study only the analytic lattice cohomology associated with $h=0$ (that is, with $\calO_{X,o}$), then we do not need
to consider the universal abelian covering. In that  case, as  the above proof shows, in order to prove the stability
of $\bH^*_{an,h=0}(\phi)$ it is enough to assume that $\Gamma$ is a tree (this is enough to  conclude that (\ref{eq:ROSSZ}) cannot happen).
For details for the non-equivariant case see \cite{AgNe1}.
\end{remark}

\subsection{The `Combinatorial Duality Property' of the pair $(\hh, \hh^\circ)$}\label{ss:anCDP}

The following property is needed  in the Euler characteristic computation.

\begin{lemma}\label{lem:hsimult} Assume that the link is a rational homolog sphere.
Then there  exists no  $l\in L_{\geq 0}$ and $v\in\calv$  such that  the differences
$\hh(l+E_v)-\hh(l)$ and $\hh^\circ (l)-\hh^\circ (l+E_v)$ are simultaneously strict positive.
\end{lemma}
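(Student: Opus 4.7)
The plan is to argue by contradiction, by exhibiting a nonzero element of $H^0(\Omega^2_{\tX}(E_v))/H^0(\Omega^2_{\tX})$ as a product of two sections, and then showing this quotient vanishes under the rational homology sphere hypothesis. This is essentially the same trick that appears in Case 3 of the proof of Case B (cf.\ \ref{bek:C}), now localized at a single vertex $E_v$.

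Suppose for contradiction that $\hh(l+E_v) > \hh(l)$ and $\hh^\circ(l) > \hh^\circ(l+E_v)$ for some $l \geq 0$. By the very definition of $\hh$, the first strict inequality means that the inclusion
$H^0(\calO_{\tX}(-l-E_v-r_h)) \subsetneq H^0(\calO_{\tX}(-l-r_h))$
is proper, so there is a section $s_1 \in H^0(\calO_{\tX}(-l-r_h))$ whose divisor on $\tX$ is $\geq l+r_h$ but with $E_v$-coefficient equal to $(l+r_h)_v$ (the minimal possible). Dually, using the reinterpretation \eqref{eq:hcirc} of $\hh^\circ$ via global $2$-forms, the strict inequality $\hh^\circ(l) > \hh^\circ(l+E_v)$ yields a section $s_2 \in H^0(\tX,\Omega^2_{\tX}(l+E_v+r_h))$ that is not in $H^0(\tX,\Omega^2_{\tX}(l+r_h))$; i.e.\ $\mathrm{div}(s_2) \geq -(l+E_v+r_h)$ with equality exactly at the $E_v$-coordinate.

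Next I would form the product $s_1 s_2$. Adding the divisor inequalities for $s_1$ and $s_2$, all exceptional coefficients of $\mathrm{div}(s_1 s_2)$ are $\geq 0$ except at $E_v$, where the coefficient is exactly $-1$. Thus $s_1 s_2 \in H^0(\tX,\Omega^2_{\tX}(E_v))$ but $s_1 s_2 \notin H^0(\tX,\Omega^2_{\tX})$. Hence
\[
H^0(\tX,\Omega^2_{\tX}(E_v))/H^0(\tX,\Omega^2_{\tX}) \neq 0.
\]
On the other hand, from the short exact sequence $0 \to \Omega^2_{\tX} \to \Omega^2_{\tX}(E_v) \to \Omega^2_{\tX}(E_v)|_{E_v} \to 0$ combined with Laufer/Serre duality (\ref{eq:duality}), this quotient injects into $H^1(E_v,\calO_{E_v})^*$. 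Since the link is a rational homology sphere the graph $\Gamma$ is a tree with all $g_v=0$, so $H^1(\calO_{E_v}) = 0$, giving the required contradiction.

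The only step with any subtlety is the bookkeeping of the exceptional coefficients of $\mathrm{div}(s_1)$ and $\mathrm{div}(s_2)$ at $E_v$ versus the other components, to make sure that $s_1 s_2$ really represents a nonzero class in $H^0(\Omega^2_{\tX}(E_v))/H^0(\Omega^2_{\tX})$ and not just in some larger twist; everything else is an immediate application of the vanishing theorem of \ref{th:GR} (used implicitly in rewriting $\hh^\circ$ as a quotient of spaces of $2$-forms) and the standard adjunction/rationality argument.
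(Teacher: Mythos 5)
Your proof is correct and takes essentially the same route as the paper's: both extract sections $s_1$ and $s_2$ from the two strict inequalities (with divisorial equality at the $E_v$-coordinate), multiply them to get a $2$-form in $H^0(\Omega^2_{\tX}(E_v))\setminus H^0(\Omega^2_{\tX})$, and conclude via Laufer/Serre duality that $h^1(\calO_{E_v})\neq 0$, contradicting the rational homology sphere assumption. The only differences are notational (where the twist by $r_h$ is carried in the divisor bookkeeping), so no further comment is needed.
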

\begin{proof}
%
If $\hh(l+E_v)>\hh(l)$ then the inclusion $H^0(\calO_{\tX}(-l-r_h-E_v))\subset H^0(\calO_{\tX}(-l-r_h))$
is strict. This means that there exists
a section $s_1\in H^0(\calO_{\tX}(-r_h))$ with ${\rm div}_E(s_1)\geq l$, where the
$E_v$-coordinate is  $({\rm div}_E(s_1))_v = l_v$.

Similarly, if $\hh^\circ (l)>\hh^\circ(l+E_v)$ then
the inclusion $H^0(\Omega^2_{\tX}(l+r_h)\subset H^0(\Omega^2_{\tX}(l+r_h+E_v))$ is strict, that is,
 there exists
a section $s_2\in H^0(\Omega^2_{\tX}(r_h))$ with ${\rm div}_E(s_2)\geq -l-E_v$ and  the
$E_v$-coordinate is  $({\rm div}_E(s_2))_v = -l_v-1$.

Therefore, the section $s_1s_2\in H^0(\Omega^2_{\tX})$ satisfies
 ${\rm div}_E\ (s_1s_2) \geq -E_v$  and
  $({\rm div}_E (s_1s_2))_v = -1$. This implies $H^0(\Omega_{\tX}^2(E_v))/H^0(\Omega_{\tX}^2)\not=0$, or,
  by (\ref{bek:LauferDual}), $h^1(\calO_{E_v})\not=0$. This last fact contradicts  $H^1(M,\Q)=0$.
\end{proof}

\subsection{The Euler characteristic $eu(\bH^*_{an,h}(X,o))$}\label{ss:anEu}

\bekezdes Lemma \ref{lem:hsimult} will allow us to determine the  Euler characteristic $eu(\bH^*_{an,h}(X,o))$
of the analytic lattice cohomology by a combinatorial argument. Surprisingly, this Euler characteristic automatically
equals the Euler characteristic of  path cohomolgies  associated with any increasing path
(this equality definitely does not hold
in  the topological versions of the corresponding lattice cohomologies).

First, let us fix the notations.
In the sequel
we will also consider
for any increasing path $\gamma$ connecting 0 and $c$
 (that is, $\gamma=\{x_i\}_{i=0}^t$, $x_{i+1}=x_i+E_{v(i)}$, $x_0=0$ and $x_t=c$, $c\geq Z_{coh,h}$)
 the  path lattice cohomology $\bH^0(\gamma,w(h))$ as in \ref{bek:pathlatticecoh}, associated with the weight function
 (depending on $h\in H$).
Accordingly,  we have the numerical
Euler characteristic
 $eu(\bH^0(\gamma,w(h)))$ as well.

Then  Theorem \ref{th:comblattice} implies the following.

\begin{theorem}\label{th:euANLAT} Assume that the link is a $\Q HS^3$. Then
$eu(\bH^*_{an,h}(X,o))=p_{g,h}(X,o)$
 for any $h\in H$. Furthermore, for any increasing path $\gamma$ connecting 0 and $c$ (where
$c\geq Z_{coh,c}$)
 we also have $eu(\bH^*_{an}(\gamma,w(h)))=p_{g,h}$.
\end{theorem}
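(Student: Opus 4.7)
The plan is to recognize that this theorem is a direct application of the purely combinatorial Theorem \ref{th:comblattice} to the specific weight function $w_0(l) = \hh(l) + \hh^\circ(l) - \hh^\circ(0)$ built from the Hilbert function and the $h^1$-invariant of $\calO_l(-r_h)$. The entire work has essentially been done upstream: the combinatorial mechanism is in section 3, and the crucial geometric input (the CDP) is precisely Lemma \ref{lem:hsimult}. So the proof reduces to verifying the hypotheses of Theorem \ref{th:comblattice}(a) and then translating the conclusion.

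First I would check the two hypotheses of Theorem \ref{th:comblattice}. The function $\hh(l) = \widetilde{\hh}(l+r_h)$ is induced by the $L'$-filtration $\{\cF(l')\}$ on $\cO_{X_a,o}$ of Definition \ref{filtr}, so it satisfies the matroid inequality \eqref{eq:matroid} as noted in the paper, which in turn implies the stability property \eqref{eq:stability}. The Combinatorial Duality Property for the pair $(\hh,\hh^\circ)$ is then exactly the content of Lemma \ref{lem:hsimult}, whose proof crucially uses $H_1(M,\Q)=0$ through the vanishing $h^1(\calO_{E_v})=0$.

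With both hypotheses in hand, Theorem \ref{th:comblattice}(a) immediately yields
\[
  eu(\bH^*(\gamma,w(h))) \;=\; eu(\bH^*(R(0,c),w(h))) \;=\; \hh^\circ(0) - \hh^\circ(c)
\]
for any increasing path $\gamma$ from $0$ to $c$. Since $\hh^\circ(l) = p_{g,h} - h^1(\calO_l(-r_h))$, the boundary values are $\hh^\circ(0) = p_{g,h}$ (the empty cycle contributes $0$ to $h^1$) and, by the defining property of $Z_{coh,h}$ together with $c \ge Z_{coh,h}$, we have $h^1(\calO_c(-r_h)) = h^1(\tX, \calO_{\tX}(-r_h)) = p_{g,h}$, so $\hh^\circ(c) = 0$. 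Both Euler characteristics therefore equal $p_{g,h}$.

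Finally I would identify $eu(\bH^*_{an,h}(X,o))$ with $eu(\bH^*(R(0,c),w(h)))$: by the definitions in subsection \ref{ss:anR} and the independence results (Lemma \ref{lem:INDEPAN} and Theorem \ref{th:annlattinda}), the analytic lattice cohomology is represented by $\bH^*(R(0,c),w(h))$ for any $c\ge Z_{coh,h}$ and any good resolution, and the Euler characteristic depends only on this module. There is essentially no obstacle here; the only subtle point to double-check is the boundary evaluation $\hh^\circ(c) = 0$, which is precisely where the choice $c \ge Z_{coh,h}$ is used, as guaranteed by Proposition \ref{prop:cohcyc2}(c).
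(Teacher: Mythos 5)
Your proposal is correct and follows the same route as the paper: verify the stability property of $\hh$ (it comes from a filtration) and the Combinatorial Duality Property (Lemma \ref{lem:hsimult}), then apply Theorem \ref{th:comblattice}(a) and evaluate $\hh^\circ(0)-\hh^\circ(c)=p_{g,h}$. The paper's own proof is just a terser version of exactly this argument.
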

\begin{proof}
We claim that the assumptions of Theorem \ref{th:comblattice} are satisfied. Indeed,
the CDP was verified in \ref{lem:hsimult}, while the stability property of $\hh$  follows
since it is associated with a filtration.
\end{proof}
This  in particular means that $\bH^*_{an,h}(X,o)$ is a {\it categorification  of the equivariant geometric genus},
that is, it is a graded cohomology module whose Euler characteristic is $p_{g,h}$.

\subsection{Weighted cubes and the Poincar\'e series $P(\bt)$.} \label{ss:Poinc}

\ Assume that $c=\infty$, i.e.
 $R(0,c)=L_{\geq 0}$. Let us denote the weight function associated with $h\in H$ by
 $w_{an, h}$, in order to emphasise the $h$--dependence.

The reader is invited to review the definition of the Poincar\'e series $P(\bt)$ from (\ref{eq:4}).
That identity together with part  {\it (b)} of Theorem \ref{th:comblattice}
 show that   the analytic Poincar\'e series
associated with the divisorial filtration of the local ring $\cO_{X_a,o}$
has the following interpretation
in terms of  the (analytic) weighted cubes:
$$P(\bt)=\sum_{h\in H}\ \sum_{l\geq 0}\ \sum _{I\subset \calv}\  (-1)^{|I|+1} w_{an,h}((l,I))\, \bt^{l+r_h}.$$
The above formula can be compared with its topological analogue.
One defines a topological zeta (Poincar\'e) series $Z(\bt)$ from $\Gamma$,
and there is an identical formula for $Z(\bt)$, where $w_{an}$ is replaced by
$w_{top}$,  cf. \cite{NJEMS}.

\begin{question} Assume that the link of the  universal abelian covering $(X_a,0)$ of $(X,o)$ is a rational homology sphere.
Then the analytic lattice cohomology $\bH^*_{an,0}(X_a,o)$ of $(X_a,o)$ associated with the trivial element of $H_1(M(X_a),\Z)$ is
well--defined, and it is the categorification of $p_g(X_a,o)$.

Furthermore, for every $h\in H$ we have the analytic lattice cohomology $\bH^*_{an,h}(X,o)$ of $(X,o)$.
$\bH^*_{an,h}(X,o)$ is the categorification of $p_{g,h}(X.o)$.
Recall also that $p_g(X_a,o)=\sum_{h\in H} p_{g,h}(X,o)$.

Is there a  relationship between
$\bH^*_{an,0}(X_a,o)$  and the collection $\{\bH^*_{an,h}(X,o)\}_{h\in H}$ ?
\end{question}

\section{Comparison of $\bH^*_{an}(X,o)$ with $\bH^*_{top}(M)$}\label{an:Comp}

\subsection{}  Above, for every $h\in H$, we defined the analytic lattice cohomology $\bH^*_{an,h}(X,o)$
associated with $L_{\geq 0}$ and the weight function $w_{an, h}:L_{\geq 0}\to \Z$.

Similarly, for any $h\in H$ we can consider the characteristic element $k=-Z_K+2r_h$  and
the topological lattice cohomology associated with $k=-Z_K+2r_h$ and $L$
via the weight function $l\mapsto -(l, l-Z_K+r_h)/2$. Let us denote it by
$\bH^*_{top,h}(M)$.  On the other hand, in \ref{bek:rhred} we proved that
$\bH^*_{top,h}(M)\simeq \bH^*_{top,h}(M, L_{\geq 0})$, where the second cohomology is associated with
the same $k$ but with lattice points only on $L_{\geq 0}$.
The advantage of  $\bH^*_{top,h}(M, L_{\geq 0})$ is that it is defined on the same set of lattice points as the
analytic $\bH^*_{an,h}(X,o)$.

Let us compare these two objects.
First,  we compare the analytic and topological
weight functions (both defined on $L_{\geq 0}$).
 Consider  the exact sequence $$0\to \calO_{\tX}(-l-r_h)\to \calO_{\tX}(-r_h)\to \calO_l(-r_h)\to 0,$$
and in its cohomology long exact sequence the morphism  $\alpha_h(l):H^0(\calO_{\tX}(-r_h))\to H^0(\calO_l(-r_h))$.
Then, $\hh(l)+{\rm coker}(\alpha_h(l))=h^0(\calO_l(-r_h))$, or,
$w_{an,h}=\chi(\calO_l(-r_h))-{\rm coker}( \alpha_h(l))$.

But $\chi(\calO_l(-r_h))=\chi(l)-(l,r_h)=\chi_{-Z_K+2r_h}(l)=w_{top,h}(l).$ Hence
$$w_{an,h}(l)=w_{top,h}(l)-{\rm coker}( \alpha_h(l)) \ \ \mbox{for any $l\in L_{\geq 0}$}.$$
\begin{corollary}
If $\alpha_h(l)$ is surjective for every $l\in L_{\geq 0}$  then $\bH^*_{an,h}(X,o)$ and $\bH^*_{top,h}(M)$ are
isomorphic as graded $\Z[U]$--modules. In particular, in such a case their Euler characteristics also coincide:
$$p_{g,h}=\sw_{\sigma[k]}(M)-(k^2+|\cV|)/8, \ \ \mbox{where $k=-Z_K+2r_h$}.$$
\end{corollary}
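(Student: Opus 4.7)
The plan is to observe that the hypothesis collapses the correction term identified just above the corollary, so that the two weight functions become pointwise equal on $L_{\geq 0}$, and then to read off both conclusions.

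First, I would unwind the identity $w_{an,h}(l)=w_{top,h}(l)-\mathrm{coker}(\alpha_h(l))$ established immediately before the statement. Under the surjectivity assumption the cokernel is zero for every lattice point $l\in L_{\geq 0}$, and hence $w_{an,h}=w_{top,h}$ as $\Z$-valued functions on the $0$-cubes of $L_{\geq 0}$. Since in both theories the higher-dimensional weights are defined by the same max-over-vertices rule (see \ref{9complex}), the compatible systems $\{w_q\}_q$ on either side coincide; consequently the sublevel subcomplexes $S_n$ used to compute $\bH^*_{an,h}(X,o)$ and $\bH^*_{top,h}(M,L_{\geq 0})$ are literally the same finite cubical complexes, with the same inclusions $S_n\hookrightarrow S_{n+1}$ encoding the $U$-action. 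This already gives an isomorphism of $\Z$-graded $\Z[U]$-modules $\bH^*_{an,h}(X,o)\cong \bH^*_{top,h}(M,L_{\geq 0})$.

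Next I would invoke Remark \ref{bek:rhred} (the first-quadrant reduction for the topological theory with characteristic element $k=-Z_K+2r_h$), which gives a graded $\Z[U]$-module isomorphism $\bH^*_{top,h}(M,L_{\geq 0})\simeq \bH^*_{top,h}(M)$. Composing with the isomorphism of the previous paragraph yields the asserted isomorphism $\bH^*_{an,h}(X,o)\cong \bH^*_{top,h}(M)$.

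Finally, for the numerical consequence I would simply equate Euler characteristics. On the analytic side, Theorem \ref{th:euANLAT} gives $eu(\bH^*_{an,h}(X,o))=p_{g,h}(X,o)$; on the topological side, Theorem \ref{th:ECharLC} gives $eu(\bH^*_{top,h}(M))=\sw_{\sigma[k]}(M)-(k^2+|\cV|)/8$ with $k=-Z_K+2r_h$ (using that $k_r=-Z_K+2s_h$ and $s_h=r_h$ modulo $L$ produce the same spin$^c$--structure, so the Seiberg--Witten term is well-defined). The isomorphism above forces these two numbers to agree, which is exactly the displayed equality. The only subtle point worth a line of verification is that the two Euler-characteristic conventions coincide (both use $-\min w_0$ plus alternating ranks of reduced cohomologies, cf. \ref{9F}), which is immediate since the weight functions on $L_{\geq 0}$ are identical. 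No essential obstacle is present: the entire content is that $\mathrm{coker}\,\alpha_h(l)=0$ pointwise forces the two lattice cohomologies to be built from the same cubical filtration.
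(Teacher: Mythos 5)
Your proposal is correct and follows exactly the route the paper intends: the corollary is stated as an immediate consequence of the identity $w_{an,h}(l)=w_{top,h}(l)-{\rm coker}(\alpha_h(l))$, so vanishing of the cokernel forces the two weight systems (and hence the cubical filtrations $S_n$ on $L_{\geq 0}$) to coincide, after which Remark \ref{bek:rhred}, Theorem \ref{th:euANLAT} and Theorem \ref{th:ECharLC} give the two stated conclusions. Your parenthetical point about well-definedness is the only place needing a word more care — one should note that the normalized quantity $\sw_{\sigma[k]}(M)-(k^2+|\cV|)/8$ is unchanged when $k$ is replaced by $k+2l$, $l\in L$ (since $-k'^2/8=-k^2/8+\chi_k(l)$ exactly matches the degree shift in $eu$), so the formula of Theorem \ref{th:ECharLC}, stated for $k_r=-Z_K+2s_h$, applies verbatim to $k=-Z_K+2r_h$ — but this is standard and does not affect the validity of your argument.
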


In general, $w_{an,h}\leq w_{top,h}$. Recall that
 $S_{an,h,n}=\cup\{\square \,:\, w_{an,h}(\square)\leq n\}$ and
$S_{top,h,n}=\cup\{\square \,:\, w_{top,h}(\square)\leq n\}$.  Therefore
$S_{top,h,n}\subset S_{an,h,n}$ for any $n\in\Z$.
In particular, we have  a graded $\Z[U]$--module morphism
$$\mathfrak{H}^*_h:\bH^*  _{an,h}(X,o)\to
\bH^*_{top,h}(M)$$
and a morphism of graded roots
$$\mathfrak{r}^*_h:\RR_{top,h}(X,o)\to
\RR_{an,h}(M).$$

\begin{problem} (a) For a fixed topological type find all the possible graded $\Z[U]$--modules $\{\bH^*_{an}\}_{an,h}$,
associated with all the possible analytic structures supported on that topological type.

(b) For a fixed topological type (hence for a fixed $\bH^*_{top,h}(M)$) and analytic type $(X,o)$ supported on it
find special properties of
 $\bH^*_{an,h}(X,o)$ (and of the morphism $\bH^*_{an,h}(X,o)\to \bH^*_{top,h}(M)$), which might characterize the
 classification  from part (a).

\end{problem}

\section{Preparation for the reduction theorem. The topological reduction.}

\subsection{What is the aim
 of a Reduction Theorem?}
The definition of a lattice cohomology $\bH^*(T,w)$ is based on the choice of the following objects:
a lattice $L=\Z^s$, a convenient union of cubes $T\subset \R^s$, a weight function $w:T\cap \Z^s\to Z$.
In general, $s$, the rank of $L$, can be large, and the direct computations are very hard.
By Reduction Theorem we replace these starting objects by a new collection $(\bar{L}, \bar{T}, \bar{w})$ such that
 ${\rm rank}(\bar{L})<{\rm rank}(L)$ and $\bH^*(T,w)=\bH^*(\bar{T},\bar{w})$.

The Reduction Theorems associated with the topological  lattice cohomology are based on the following observation: the reduced
cohomologies are vanishing if and only if  $M$ is the link of a rational singularity.
Rationality can be characterized by properties of graphs (see below).
In general, we wish to `eliminate' parts/subgraphs,  which behave like rational graphs.
Technically, the procedure runs as follows: we choose $s'$ vertices (the bad vertices) such that by the modification of their Euler numbers
we get a rational graph. Then there is a reduction to rank $s'$.

\bekezdes \label{ex:LatHomRat} {\bf Rational graphs.}
Recall that $(X,o)$ is called rational if $p_g=0$. By a result of Artin \cite{Artin62,Artin66}
$p_g=0$ if and only if
 $\chi(l)\geq 1$ for all $ l\in L_{>0}$  (hence it is a topological property of $M$  readable from $\Gamma$).
The links of
 any  rational singularity is a
 rational homology sphere. 
 The class of  rational  graphs is closed while taking subgraphs or/and
decreasing the Euler numbers $E_v^2$.

%
%

\subsection{Measure of non-rationality. `Bad' vertices}\label{ss:BadVer} \cite{NOSz,LN1,AgNe1,Book}\

Recall
that decreasing all the Euler numbers of  a tree, with all the  vertices decorated by $g_v=0$,
we  obtain a rational graph.
The next definition aims to
identify those vertices where such a decrease is really necessary.

\begin{definition}\label{def:SWrat} Let $\Gamma$ be a resolution graph such that $M$ is a rational homology sphere.

A subset of vertices
$\ocalj=\{v_1,\ldots, v_{\overline{s}}\}\subset \cV$ is called {\it B--set},
if by replacing the Euler numbers
 $e_v=E_v^2$ indexed by $v\in \ocalj$ by some more negative integers
$e'_v\leq e_v$ we get a rational graph.

A graph is called AR-graph (`almost rational graph') if it admits a B--set of cardinality $\leq 1$.
\end{definition}
\begin{example}\label{ex:*sets}
(a) A possible $B$--set can be chosen in many different ways, usually
it is not determined uniquely even if it is minimal with this property.
Usually we allow non-minimal $B$--sets as well.

(b) If $H_1(M,\Q)=0$ then the set of nodes is a B--set.
Hence  any  star-shaped graph (with $H_1(M,\Q)=0$) is AR.
Other  AR families are: rational and elliptic graphs and graphs of superisolated singularities associated with a
rational unicuspidal curve \cite{NOSz,NGr}.

(c)
The class of  AR graphs is closed while taking subgraphs or/and
decreasing the Euler numbers.

\end{example}

\bekezdes\label{bek:XI}
 {\bf The definition of the lattice points $x(\bar{l})$.}
Assume that $\ocalj:=\{v_k\}_{k=1}^{\overline{s}}$ is a subset of $\calj$.
Then we split the set of vertices $\calj$ into the disjoint union $\overline{\calj}\sqcup\calj^*$.
Let $\{m_v(x)\}_v$ denote  the coefficients of a cycle $x\in L\otimes \setQ$, that is $x=\sum_{v\in\calj}m_v(x)E_v$.
We also fix
 $h\in H$ and  the  representative $s_h\in L'$.

 Our goal is to define some universal cycles  $x(\bar{l})\in L$
 associated with $\bar{l}\in L(\ocalj)$ and $h\in H$.

\begin{proposition}\label{lemF1} \ \cite[Lemma 7.6]{NOSz}, \cite{LN1}
For any
$\bar{l}:=\sum_{v\in \ocalj}\ell_v
E_v\in L(\ocalj)$
there exists a unique cycle
$x(\bar{l})\in L$ (depending also on $h$)
satisfying the next properties:
\begin{itemize}
\item[(a)] \ \ $m_{v}(x(\bar{l}))=\ell_v$ \ for any distinguished  vertex $v\in\ocalj$;
\vspace{1mm}
\item[(b)] \ \ $(x(\bar{l})+s_h,E_v)\leq0$ \ for every `non-distinguished vertex' $v\in\calj^*$;
\vspace{1mm}
\item[(c)] \ \ $x(\bar{l})$ is minimal with the two previous properties (with respect to $\leq$).
\end{itemize}
\end{proposition}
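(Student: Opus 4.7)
The plan is to introduce
\[
\cA(\bar{l}) := \{\, x \in L \,:\, m_v(x) = \ell_v \text{ for all } v\in\ocalj,\ (x+s_h,E_v)\leq 0 \text{ for all } v\in\calj^*\,\}
\]
and to prove that $\cA(\bar{l})$ is (i) non-empty, (ii) bounded below, (iii) closed under coefficient-wise minimum. Once (i)-(iii) are established, the unique minimum of $\cA(\bar{l})$ is the desired $x(\bar{l})$, and (a)-(c) of the proposition are immediate.

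For (i) and (ii), I would exploit the negative definiteness of the intersection form restricted to $L(\calj^*)$. Writing any cycle $x$ satisfying (a) as $x=\bar{l}+y$ with $y$ supported on $\calj^*$, condition (b) reads $(y-y_0,E_v)\leq 0$ for every $v\in\calj^*$, where $y_0\in L(\calj^*)\otimes\bQ$ is the unique rational solution of $(\bar{l}+y_0+s_h,E_v)=0$ on $\calj^*$. Lipman's lemma applied to the negative definite sublattice $L(\calj^*)$ forces $y\geq y_0$ coordinate-wise, which simultaneously yields (ii) and shows that non-emptiness reduces to producing an integer cycle $y\in L(\calj^*)$ with $y\geq y_0$ satisfying the Lipman-type inequalities on $\calj^*$. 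For (i), I would supply such a $y$ via a Laufer-style computation sequence: start from $x_0:=\bar{l}$ (so $y^{(0)}=0$) and, while some $v\in\calj^*$ satisfies $(x_k+s_h,E_v)>0$, set $x_{k+1}:=x_k+E_v$. The process preserves condition (a); termination is guaranteed by the same negative-definite argument, since the $\calj^*$-coefficients of any cycle satisfying (b) lie in a bounded region of $L(\calj^*)\otimes\bR$.

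Property (iii) is the decisive step. For $x_1,x_2\in\cA(\bar{l})$ set $x:=\min(x_1,x_2)$. Condition (a) is preserved pointwise. For (b), fix $v\in\calj^*$ and assume without loss of generality that $m_v(x)=m_v(x_1)$. Then $z:=x_1-x\geq 0$ has $m_v(z)=0$, so $z$ is supported on $\calj\setminus\{v\}$. Since $(E_w,E_v)\geq 0$ for every $w\neq v$, we get
\[
(z,E_v)=\sum_{w\neq v}m_w(z)(E_w,E_v)\geq 0,
\]
and consequently $(x+s_h,E_v)=(x_1+s_h,E_v)-(z,E_v)\leq (x_1+s_h,E_v)\leq 0$, as required.

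The main obstacle I anticipate is not conceptual but bookkeeping: one has to reconcile the non-integrality of $s_h\in L'\setminus L$ and of $y_0\in L(\calj^*)\otimes\bQ$ with the requirement that the minimum be attained in the integer lattice $L$. Both difficulties dissolve once one invokes negative definiteness of the intersection form on $L(\calj^*)$ together with the non-negativity of its off-diagonal entries, which are exactly the hypotheses under which Lipman's cone arguments apply without modification. The overall scheme parallels \cite[Lemma 7.6]{NOSz} and its refinement in \cite{LN1}, where analogous universal cycles are constructed.
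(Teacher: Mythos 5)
The paper itself offers no proof of this proposition — it is quoted from \cite[Lemma 7.6]{NOSz} and \cite{LN1} — and your plan reproduces the standard argument used there: non-emptiness of the set $\cA(\bar l)$ (either by your Laufer-type sequence or directly by adding to $\bar l$ a large integral multiple of the anti-dual cycles of the negative definite sublattice $L(\calj^*)$), boundedness below via the Lipman-cone inclusion $\{z:(z,E_v)\le 0\ \forall v\in\calj^*\}\subset L(\calj^*)_{\ge 0}\otimes\Q$, and closure under coordinatewise minimum via the non-negativity of the off-diagonal intersection numbers. The key computation $(z,E_v)\ge 0$ for $z\ge 0$ with $m_v(z)=0$ is correct, and the presence of the non-integral shift $s_h\in L'$ is indeed harmless since it only enters through the fixed right-hand sides of the inequalities; the proposal is sound and essentially the same as the cited proofs.
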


\bekezdes
Note  that the definition of an B--set does not involve
any $k\in {\rm Char}$, hence such a set can be uniformly used for any
$k_r$. In this section we  fix such an B--set $\ocalj\subset \calv$
as in \ref{def:SWrat} (with cardinality $\bar{s}$)  and any $k_r\in {\rm Char}$.
Then,   for each $\bar{l}=\sum_{v\in \ocalj} \ell_vE_v\in L(\ocalj)$,
with every  $\ell_v\geq 0$, we define the  universal  cycle $x(\bar{l})$ associated with
$\bar{l}$ and $s_h$ (where $k_r=-Z_K+2s_h$) as in \ref{lemF1}.

Our goal is to replace the cubes of the lattice $\R^s$ (or from $(\R_{\geq 0})^s$)
with cubes from $(\R_{\geq 0})^{\bar{s}}$. In particular,  we need to define the new weights.
Define   the function
$\overline{w}_0:(\Z_{\geq 0})^{\bar{s}}\to \Z$ by
$\overline{w}_0(\bar{l}):=\chi_{k_r}(x(\bar{l}))$.
 Then $\overline{w}_0$ defines a set $\{\overline{w}_q\}_{q=0}^{\bar{s}}$ of compatible weight functions  as in \ref{9dEF1},
$\overline{w}_q(\square)=
 \max\{\overline{w}_0(v)\,:\, v \ \mbox{ is a vertex of $\square$}\}$.
 This system   is   denoted by $\overline{w}[k_r]$.
Let us denote the associated  lattice cohomology by
$H^*((\R_{\geq 0})^{\bar{s}},\overline{w}[k_r])$.

\begin{theorem}\label{th:red} {\bf (Topological Reduction Theorem)}  \cite{LN1} Assume that $\ocalj$ is an B--set. Then
 there exists  a graded $\Z[U]$-module isomorphism
\begin{equation}\label{eq:reda}
\bH^*((\R_{\geq 0})^s,k_r)\cong\bH^*((\R_{\geq 0})^{\bar{s}},\overline{w}[k_r]).
\end{equation}
\end{theorem}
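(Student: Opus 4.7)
The plan is to construct a weight-preserving deformation retract from $(\R_{\geq 0})^s$ onto a subcomplex combinatorially identified with $(\R_{\geq 0})^{\bar s}$ via the assignment $\bar l \mapsto x(\bar l)$. Writing $\pi:(\R_{\geq 0})^s \to (\R_{\geq 0})^{\bar s}$ for the projection onto the $\ocalj$-coordinates, the core task is to contract each fiber $\pi^{-1}(\bar l)$ onto the single lattice point $x(\bar l)$ in a way compatible with the weight filtration $\{S_n\}_n$ and with the gluing of cubes across different $\bar l$.

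The engine is a Laufer-type computation. For any $l\in L$ and $v\in\calj^*$, adjunction together with $g_v=0$ yields
\begin{equation*}
\chi_{k_r}(l+E_v) - \chi_{k_r}(l) \;=\; 1 - (l+s_h, E_v),
\end{equation*}
so moving from $l$ to $l+E_v$ with $v\in\calj^*$ and $(l+s_h,E_v)>0$ weakly decreases $\chi_{k_r}$. Starting from any $l$ with $\pi(l)=\bar l$ and iterating, the algorithm terminates, because the auxiliary graph obtained by lowering the Euler numbers on $\ocalj$ is rational, at the minimal cycle satisfying $(x+s_h,E_v)\leq 0$ for every $v\in\calj^*$, which is exactly $x(\bar l)$ of Proposition~\ref{lemF1}. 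This gives, for each $l\in\pi^{-1}(\bar l)$, a monotone path to $x(\bar l)$ along which no intermediate weight exceeds $\max\{\chi_{k_r}(l),\chi_{k_r}(x(\bar l))\}$. I would package these paths into an explicit deformation retract of each slice $\pi^{-1}(\bar l)$ onto $\{x(\bar l)\}$ that, for every $n$, restricts to a deformation retract $\pi^{-1}(\bar l)\cap S_n \to \{x(\bar l)\}\cap S_n$.

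I would then glue these slice retractions across varying $\bar l$ to produce a global deformation retract of $(\R_{\geq 0})^s$ onto a subcomplex whose cubes are in bijection with those of $(\R_{\geq 0})^{\bar s}$ through $\bar l\mapsto x(\bar l)$. The crucial identity to verify is that for a cube $\square=(\bar l,\bar I)$ with $\bar I\subset\ocalj$, the maximum of $\chi_{k_r}(x(\bar l+E_{\bar I'}))$ over $\bar I'\subset\bar I$, which by definition equals $\overline{w}(\square)$, agrees with the maximum of $w$ over the image of the corresponding cubes of $(\R_{\geq 0})^s$ under the retract. Combined with the compatibility of the retraction with the inclusions $S_n\hookrightarrow S_{n+1}$, this yields the desired graded $\Z[U]$-module isomorphism.

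The main obstacle is the coherence of the slice retractions and the interaction between $\bar l\mapsto x(\bar l)$ and the cube structure. One needs monotonicity statements such as $x(\bar l+E_{\bar I})\geq x(\bar l)$, together with control on intermediate values along Laufer sequences between them, both extracted from the minimality characterisation of $x(\cdot)$ and from the rationality of the modified graph. A secondary, essentially bookkeeping, point is to choose the retracting homotopies uniformly in $n$ so that the resulting isomorphism intertwines the $U$-actions on the two sides.
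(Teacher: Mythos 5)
The paper itself offers no proof of Theorem~\ref{th:red}: it is quoted from \cite{LN1}. Your plan — project along $\calj^*$, contract each fiber onto the universal cycle $x(\bar l)$ via generalized Laufer steps governed by $\chi_{k_r}(l+E_v)-\chi_{k_r}(l)=1-(l+s_h,E_v)$, and glue — is exactly the strategy of \cite{LN1}, and it parallels the proof the paper does give for the analytic analogue (Theorem~\ref{th:REDAN}), where the same fiberwise projection $\pi_{\R}$ and quasifibration/\v{C}ech argument appear.

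One step as you state it is wrong, though the error is repairable and you half-acknowledge it in your list of obstacles. The increasing Laufer algorithm started at an \emph{arbitrary} $l$ in the fiber $\pi^{-1}(\bar l)$ terminates at the minimal cycle $z\geq l$ with $(z+s_h,E_v)\leq 0$ for all $v\in\calj^*$; this equals $x(\bar l)$ only when $l\leq x(\bar l)$. For the part of the fiber not dominated by $x(\bar l)$ you need a descending argument, and it is here that rationality enters in a different way than you use it: since rational graphs are closed under taking subgraphs, the subgraph on $\calj^*$ is rational, so Artin's criterion gives $\chi(y)\geq 1$ for every $y>0$ supported on $\calj^*$; combined with $(x(\bar l)+s_h,y)\leq 0$ this yields $\chi_{k_r}(x(\bar l)+y)\geq \chi_{k_r}(x(\bar l))+\chi(y)$, which controls the weight as one contracts the upper part of the fiber down to $x(\bar l)$. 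Relatedly, a weight-controlled path from each fiber point to $x(\bar l)$ is not enough for the lattice-cohomology isomorphism: you must show that $\pi_{\R}^{-1}(\square^\circ)\cap S_n$ is contractible for every cube $\square$ of the reduced complex (as in \ref{bek:proof1}), i.e.\ that the sublevel sets of $\chi_{k_r}$ restricted to a fiber are contractible — which is again the vanishing statement for rational graphs applied fiberwise, not a consequence of connectivity alone. With these two points supplied, your outline matches the cited proof.
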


\section{Analytic Reduction Theorem}

\subsection{Analytic reduction theorem}\label{ss:anRT}

\bekezdes
Our next goal is to prove a `Reduction Theorem', the analogue of the topological Theorem
\ref{th:red}.
Via such a result, the rectangle $R=R(0,c)$ can be replaced  by another rectangle sitting in a
lattice of smaller rank. The procedure starts with identification of a set of `bad' vertices,
see \ref{ss:BadVer}.
In the topological context the possible choice of $\overline{\calv}$ was dictated by combinatorial
properties of $\chi$ with a special focus on the topological characterization of rational germs.
In the present context we start with certain  analytic properties of 2-forms (which reflects the dominance
of $\overline{\calv}$ over $\calv^*$).
(Note that $p_g=0$ if and only  if $H^0(\tX\setminus E, \Omega^2_{\tX})=H^0(\tX, \Omega^2_{\tX})$.)

In this section we assume that the link is a rational homology sphere.
\begin{definition}\label{def:DOMAN}
We say that  $\overline{\calv}$ is an B$_{an}$--set
if it satisfy the following
property: if 
some differential form
$\omega\in H^0(\tX\setminus E, \Omega_{\tX}^2) $ satisfies
$({\rm div}_E\omega) |_{\overline{\calv}}\geq -E_{\overline{\calv}}$ \
then necessarily
$\omega\in H^0(\tX, \Omega_{\tX}^2)$.
By (\ref{bek:LauferDual}) this is equivalent with the vanishing  $h^1(\calO_Z)=0$ for any
$Z=E_{\overline{\calv}}+l^*$, where $l^*\geq 0$ and it is supported on $\calv^*$.
\end{definition}


\begin{lemma}\label{lem:AnNodes} \cite{AgNe1}
Any $B$--set is a B$_{an}$--set.  
 \end{lemma}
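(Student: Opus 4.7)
The plan is to verify the equivalent reformulation given at the end of Definition~\ref{def:DOMAN}, namely that $h^1(\calO_Z) = 0$ for every cycle $Z = E_{\overline{\cV}} + l^*$ with $l^* \in L_{\geq 0}$ supported on $\cV^*$. Since $\overline{\cV}$ is a $B$-set, I can choose integers $e'_v \leq e_v$ for $v \in \overline{\cV}$ (leaving the Euler numbers on $\cV^*$ untouched) so that the modified plumbing graph $\Gamma'$ is rational. Let $\tX'$ be the corresponding resolution of the new rational germ $(X',o')$; then $h^1(\calO_{\tX'}) = p_g(X',o') = 0$, and from the long exact sequence for $0 \to \calO_{\tX'}(-Z') \to \calO_{\tX'} \to \calO_{Z'} \to 0$ together with the vanishing $H^2(\calO_{\tX'}(-Z'))=0$, we deduce $h^1(\calO_{Z'}) = 0$ for every effective $Z' > 0$ on $\tX'$, in particular for the cycle $Z'$ with the same coefficients as $Z$.

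The core of the proof is to establish the identity $h^1(\calO_Z) = h^1(\calO_{Z'})$. I would argue this via a computation sequence $0 = Z_0 < Z_1 < \cdots < Z_t = Z$, $Z_{i+1} = Z_i + E_{v(i)}$, arranged so that each $v \in \overline{\cV}$ occurs as some $v(i)$ exactly once --- possible because $Z$ has coefficient $1$ at every vertex of $\overline{\cV}$. The short exact sequences
\[
0 \to \calO_{E_{v(i)}}(-Z_i) \to \calO_{Z_{i+1}} \to \calO_{Z_i} \to 0
\]
reduce the computation to line bundles $\calO_{E_{v(i)}}(-Z_i)$ of degree $-(Z_i, E_{v(i)})$ on the rational curves $E_{v(i)} \cong \bP^1$. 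The key observation is that this degree is \emph{invariant} under the $B$-modification: when $v(i) \in \overline{\cV}$ the step adds it for the first time, so $v(i) \notin |Z_i|$ and $(Z_i, E_{v(i)})$ uses only the combinatorial pairings $(E_w, E_{v(i)})$ with $w \neq v(i)$; when $v(i) \in \cV^*$, the self-intersection $e_{v(i)}$ is unchanged, while the remaining contributions $m_u(Z_i) \cdot (E_u, E_{v(i)})$ from $u \neq v(i)$ are again combinatorial. Consequently the terms $h^j(\calO_{E_{v(i)}}(-Z_i))$, $j=0,1$, agree between $\Gamma$ and $\Gamma'$, and an inductive matching of $h^0$ and $h^1$ of $\calO_{Z_i}$ along the long cohomology sequences yields $h^1(\calO_Z) = h^1(\calO_{Z'}) = 0$.

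The main obstacle is precisely this final inductive step: the six-term long exact sequence only furnishes equality of the Euler characteristics $h^0(\calO_{Z_i}) - h^1(\calO_{Z_i})$ between the two models, so one still has to match the rank of the connecting homomorphism $\delta_i \colon H^0(\calO_{Z_i}) \to H^1(\calO_{E_{v(i)}}(-Z_i))$, or equivalently the corank of $H^0(\calO_{Z_{i+1}}) \to H^0(\calO_{Z_i})$. I would handle this by refining the computation sequence in the spirit of Laufer's algorithm so that at every stage one falls into one of two cases: either $h^1(\calO_{E_{v(i)}}(-Z_i)) = 0$ (in which case $\delta_i$ vanishes automatically and both $h^0$ and $h^1$ propagate cleanly and identically in $\Gamma$ and $\Gamma'$), or a combinatorial criterion depending only on the invariant intersection pairings forces the restriction $H^0(\calO_{Z_{i+1}}) \to H^0(\calO_{Z_i})$ to be surjective on both sides simultaneously. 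The induction then closes, and the comparison with the rational $\Gamma'$ completes the proof.
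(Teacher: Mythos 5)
The paper itself gives no proof of this lemma (it defers to \cite{AgNe1}), so I will measure your proposal against the argument that actually closes. Your opening moves are correct: reducing to $h^1(\calO_Z)=0$ for $Z=E_{\overline{\calv}}+l^*$, and the observation that the relevant intersection numbers $(Z_i,E_{v(i)})$ are insensitive to the modification $e_v\mapsto e_v'$ because every cycle $0<l\le Z$ has $\overline{\calv}$--coefficients at most $1$. That observation is exactly the statement $\chi_{\Gamma}(l)=\chi_{\Gamma'}(l)$ for all $0<l\le Z$ (the discrepancy is $\sum_{v\in\overline{\calv}}\binom{m_v(l)}{2}(e_v-e_v')$, which vanishes when $m_v(l)\le 1$), and combined with Artin's criterion for the rational graph $\Gamma'$ it yields the purely combinatorial input $\chi(l)\ge 1$ for all $0<l\le Z$.

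The gap is in how you convert this into vanishing. The identity $h^1(\calO_Z)=h^1(\calO_{Z'})$ compares two \emph{different analytic structures}, and the connecting homomorphisms in your long exact sequences are genuinely analytic data: the corank of $H^0(\calO_{Z_{i+1}})\to H^0(\calO_{Z_i})$ is not determined by the graph (this is precisely what distinguishes, say, the various analytic structures supported on an elliptic graph), so your fallback ``a combinatorial criterion forces surjectivity on both sides simultaneously'' is not available. Your other branch --- arrange $h^1(\calO_{E_{v(i)}}(-Z_i))=0$ at every step, i.e.\ $(Z_i,E_{v(i)})\le 1$ --- would indeed suffice, but then the rational model drops out entirely, since $h^1(\calO_Z)\le\sum_i h^1(\calO_{E_{v(i)}}(-Z_i))=0$ directly on $\tX$; the existence of such a monotone Laufer-type sequence ending at the prescribed $Z$ is exactly the nontrivial point you have not proved. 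The clean way to finish from $\chi|_{(0,Z]}\ge 1$ is the cohomological-cycle argument of Proposition \ref{prop:cohcyc2}: if $h^1(\calO_Z)>0$, take the minimal $0<Z_0\le Z$ with $h^1(\calO_{Z_0})=\max_{0<l\le Z}h^1(\calO_l)$; minimality together with the sequences $0\to\calO_{E_v}(-(Z_0-E_v))\to\calO_{Z_0}\to\calO_{Z_0-E_v}\to 0$ forces $(Z_0-E_v,E_v)\ge 2$, i.e.\ $(Z_K-Z_0,E_v)\le 0$, for every $v\in|Z_0|$, whence $\chi(Z_0)=\tfrac{1}{2}(Z_0,Z_K-Z_0)\le 0$, contradicting $\chi\ge 1$ on $(0,Z]$. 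You should replace the two-model comparison by this (or by an actual construction of the monotone sequence).
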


 \begin{example}\label{ex:Ran} By the above lemma,
 the set $\overline{\calv}={\mathcal N}$ of nodes is an B$_{an}$--set.
 Moreover, if $\{\overline{v}\}$ is the B--set of an AR graph, then it is an B$_{an}$--set as well.
 \end{example}

\bekezdes Associated with a disjoint  decomposition  $\calv=\overline{\calv}\sqcup \calv^*$,  we write
any  $l\in L$
as $\overline{l}+l^*$, or $(\overline{l}, l^*)$,
where $\overline {l}$ and  $l^*$ are  supported on $\overline{\calv}$ and
 $\calv^*$ respectively.
 Fix any $c\geq Z_{coh,h}$ and set $R=R(0,c)$ as above.
 We also write  $\overline{R}$ for the rectangle $R(0, \overline{c})$, the $\overline{\calv}$-projection of $R$.
For any $\overline {l}\in \overline {R}$ define  the weight function
$$\overline{w}_0(\overline{l})=\hh(\overline{l})+\hh^\circ (\overline{l}+c^*)-p_{g,h}
=\hh(\overline{l})-h^1(\calO_{\overline{l}+c^*}(-r_h)).$$
Consider all the cubes of $\overline{R}$ and the weight function
$\overline{w}_q:\calQ_q(\overline{R})\to \Z$ defined  by $ \overline{w}_q(\square_q)=\max\{w_0(\overline{l})\,:\, \overline{l} \
 \mbox{\,is any vertex of $\square_q$}\}$.

\begin{theorem}\label{th:REDAN} {\bf Reduction theorem for the analytic lattice cohomology.}
If  $\overline{\calv}$ is an $B_{an}$--set
then there exists a graded $\Z[U]$--module isomorphism
$$\bH^*_{an}(R,w)\simeq \bH^*_{an}(\overline{R}, \overline{w}).$$
\end{theorem}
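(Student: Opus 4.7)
The plan is to exhibit the projection $\pi\colon R \to \overline{R}$ in the $\calv^*$-directions as a weight-compatible map inducing, for every $n\in\Z$, a homotopy equivalence of cubical sublevel sets $S_n(R,w) \simeq S_n(\overline{R}, \overline{w})$; passing to singular cohomology and to the $\pi_0$-tree then gives both the graded $\Z[U]$-module and graded-root isomorphisms. The approach mirrors the structure of the topological reduction Theorem~\ref{th:red} and of the blow-up invariance argument of Theorem~\ref{th:annlattinda}.

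First, I would establish the pointwise comparison $w_0 \geq \overline{w}_0 \circ \pi$: for $l = \overline{l}+l^* \in R$,
\[
w_0(\overline{l}+l^*) - \overline{w}_0(\overline{l}) = \bigl[\hh(\overline{l}+l^*) - \hh(\overline{l})\bigr] + \bigl[h^1(\calO_{\overline{l}+c^*}(-r_h)) - h^1(\calO_{\overline{l}+l^*}(-r_h))\bigr],
\]
both brackets being nonnegative (monotonicity of $\hh$; surjectivity of $H^1$-restriction for nested cycles, so $h^1$ is monotone increasing in the cycle). To show equality is attained on each fiber, I would introduce $A(\overline{l}) = \{l^* : \hh(\overline{l}+l^*) = \hh(\overline{l})\}$ and $B(\overline{l}) = \{l^* : h^1(\calO_{\overline{l}+l^*}(-r_h)) = h^1(\calO_{\overline{l}+c^*}(-r_h))\}$ inside $[0,c^*]$. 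The matroid inequality for $\hh$ and Proposition~\ref{prop:cohcyc2}(b) for $h^1$ show these are subrectangles $[0,l^*_A]$ and $[l^*_B,c^*]$, respectively. Using the CDP of Lemma~\ref{lem:hsimult}, I would produce a path from $0$ to $l^*_B$ along $E_v$-steps ($v\in\calv^*$) on which $h^1$ strictly grows; CDP then forces $\hh$ to be constant on each such step, so $l^*_B \in A(\overline{l})$ and $w_0(\overline{l} + l^*_B) = \overline{w}_0(\overline{l})$.

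Next, I would lift the identification to cubes. For each cube $\square = (\overline{l},\overline{I}) \subset \overline{S}_n$, a canonical lift $\widetilde{\square} \subset S_n$ with $\pi(\widetilde{\square}) = \square$ is constructed by a dichotomy precisely analogous to Cases~1--3 of \S\ref{bek:A}--\S\ref{bek:C} in the blow-up proof. The only potential obstruction is a simultaneous strict increase of both $\hh$ and $h^1$ across a multi-vertex $\calv^*$-move $l \to l + E_{J^*}$. Such a pair would produce sections $s_1 \in H^0(\calO_{\tX}(-r_h))$ and $s_2 \in H^0(\Omega^2_{\tX}(r_h))$ whose product $s_1 s_2$ would be a form in $H^0(\tX\setminus E,\Omega^2_{\tX})$ with $(\mathrm{div}_E(s_1s_2))|_{\overline{\calv}} \geq 0 \geq -E_{\overline{\calv}}$ but with strict poles along $\calv^*$; by the $B_{an}$-hypothesis (Definition~\ref{def:DOMAN}), any such form must extend holomorphically to all of $\tX$, a contradiction.

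The final step is globalization: with compatible cube lifts, $\pi|_{S_n}$ becomes a quasi-fibration with nonempty contractible fibers (following \S\ref{bek:proof1}--\S\ref{bek:proof2}), hence a homotopy equivalence; compatibility with the inclusions $S_n \hookrightarrow S_{n+1}$ yields the graded $\Z[U]$-module isomorphism. The principal difficulty is Step~3: Lemma~\ref{lem:hsimult} is a one-coordinate CDP and does not by itself exclude simultaneous-growth obstructions across multi-vertex $\calv^*$-moves, so the multi-vertex cohomology vanishing $h^1(\calO_{E_{\overline{\calv}}+l^*})=0$ built into the $B_{an}$-set axiom is genuinely essential to close the cube-lift argument; once the lifts are assembled compatibly, the remaining deformation-retract machinery is a direct adaptation of the one already developed in the blow-up proof.
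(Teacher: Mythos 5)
Your overall architecture (projection to $\overline{R}$, fiber analysis, cube lifts, quasifibration) is the right one, but the two analytic steps that carry the weight of the proof both have gaps, and the paper's proof is organized differently precisely to avoid them. First, your fiber-equality argument (Step 2) does not work: to conclude $l^*_B\in A(\overline{l})$ you would need an increasing path from $\overline{l}$ to $\overline{l}+l^*_B$ along which $h^1$ \emph{strictly} grows at every step, and such a path need not exist (the total growth of $h^1$ can be much smaller than the number of steps). More tellingly, your Step 2 invokes only Lemma \ref{lem:hsimult}, which holds for \emph{any} choice of $\overline{\calv}$ once the link is a $\Q HS^3$, whereas the nonemptiness of $A(\overline{l})\cap B(\overline{l})$ genuinely requires the $B_{an}$--hypothesis; an argument that never uses the hypothesis cannot prove it. Second, in Step 3 you correctly identify that the obstruction is a simultaneous jump across a multi-vertex move $l\to l+E_{J^*}$, but the product-of-sections device does not close it as stated: a strict increase of $\hh$ across $E_{J^*}$ only yields a section $s_1$ whose divisor achieves equality at \emph{one} coordinate of $J^*$, so ${\rm div}_E(s_1s_2)$ is not controlled at the several $\calv^*$--coordinates simultaneously, and the $B_{an}$--property gives no contradiction. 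Your closing sentence acknowledges this difficulty but does not resolve it.

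The paper sidesteps both problems by proving the theorem \emph{inductively, removing one vertex of $\calv^*$ at a time}: for $\cJ=\cali\cup\{v_0\}$ it compares $w_\cJ$ and $w_\cali$ along the one-dimensional fibers $\{y+tE_{v_0}\}$. There the last value $t_0$ at which $\hh$ is still constant is realized by a section $s_1\in H^0(\calO_{\tX}(-r_h))$ with $({\rm div}_Es_1)|_{\cali}\geq y$ and exact order $t_0$ at $v_0$, and the stabilization point $t^\circ_0$ of $h^1$ is realized by a form $s_2$ with $({\rm div}_Es_2)|_{\cali}\geq -y$ and exact order $-t^\circ_0$ at $v_0$; the product $s_1s_2$ then satisfies $({\rm div}_E(s_1s_2))|_{\overline{\calv}}\geq 0$ with exact order $t_0-t^\circ_0$ at the \emph{single} vertex $v_0$, so the $B_{an}$--property forces $t_0\geq t^\circ_0$, i.e. the fiber minimum equals $w_\cali(y)$ on the nonempty interval $[t^\circ_0,t_0]$. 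The same single-coordinate product argument gives $t_0(y)\geq t^\circ_0(y+E_{I'})$, which is exactly the compatibility of cube lifts you were missing. So to repair your proposal you should replace the one-shot projection by this vertex-by-vertex induction (or else prove a genuinely multi-coordinate version of the section-realization statement, which is not available from the jump of $\hh$ alone).
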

\begin{proof} For any $\cali\subset \calv$ write $c_\cali$ for the $\cali$-projection of $c$.

We proceed by induction, the proof will be given in $|\calv^*|$ steps.
For any $\overline{\calv}\subset \cali\subset \calv$ we create the inductive setup.
We write $\cali^*=\calv\setminus \cali$, and according to the disjoint union
$\cali\sqcup \cali^*=\calv$ we  consider the coordinate decomposition
$l=(l_\cali,l_{\cali^*})$. We also set $ R_\cali=R(0, c_\cali)$ and the weight function
$$w_\cali(l_\cali)=\hh(l_\cali)+\hh^\circ(l_\cali+c_{\cali^*})-p_{g,h}. $$
Then  for $\overline{\calv}\subset \cali\subset \cJ\subset \calv$, $\cJ=\cali\cup \{v_0\}$
($v_0\not\in\cali$),
we wish to prove that $\bH^*(R_\cali, w_\cali)=\bH^*(R_{\cJ}, w_{\cJ})$.
For this consider the projection $\pi_{\R}:R_{\cJ}\to R_{\cali}$.

For any fixed $y\in R_\cali$ consider the fiber $\{y+tE_{v_0}\}_{0\leq t\leq c_{v_0},\ t\in \Z}$.

Note that $t\mapsto \hh(y+tE_{v_0})$  is increasing. Let $t_0=t_0(y)$
be the smallest value $t$ for which
$\hh(y+tE_{v_0})< \hh(y+(t+1)E_{v_0})$.
If $t\mapsto \hh(y+tE_{v_0})$ is constant then we take $t_0=c_{v_0}$.
If $t_0<c_{v_0}$, then $t_0$ is characterized by the existence of a global section
\begin{equation}\label{eq:1RED}
s_1\in H^0(\calO_{\tX}(-r_h)) \ \ \mbox{with} \ \
({\rm div}_Es_1)|_\cali\geq y, \ \ \  ({\rm div}_Es_1)_{v_0}=t_0.
\end{equation}
Symmetrically,  $t\mapsto \hh^{\circ} (y+c_{\cJ^*}+ tE_{v_0})$  is decreasing. Let $t_0^\circ=t_0^\circ (y)$
be the smallest value $t$ for which
$\hh^{\circ} (y+c_{\cJ^*}+tE_{v_0})=\hh^{\circ} (y+c_{\cJ^*}+(t+1)E_{v_0})$. The value
$t_0^\circ$  is characterized by the existence of a section
\begin{equation}\label{eq:2RED}
s_2 \in H^0(\tX\setminus E, \Omega_{\tX}^2(r_h)) \ \ \mbox{with} \ \
({\rm div}_E s_2) |_\cali\geq - y, \ \ \  ({\rm div}_Es_2 )_{v_0}=-t^{\circ}_0.
\end{equation}
This shows that there exist a form $\omega=s_1s_2\in H^0(\tX\setminus E, \Omega_{\tX}^2)$ such that
$({\rm div}_E\omega )|_\cali\geq 0$ and $({\rm div}_E\omega )_{v_0}=t_0-t^{\circ}_0$.
By the B$_{an}$ property we necessarily must have $t_0-t^{\circ}_0\geq 0$.
Therefore, the weight $t\mapsto w_{\cJ}(y+tE_{v_0})=\hh(y+tE_{v_0})+\hh^{\circ } (y+tE_{v_0}+c_{\cJ^*})-p_{g,h}$
is decreasing for $t\leq t_0^\circ$, is increasing for $t\geq t_0$. Moreover, for $t_0^\circ \leq t\leq t_0$
it  takes the
constant value $\hh(y)+\hh^{\circ } (y+c_{v_0}E_{v_0}+c_{\cJ^*})-p_{g,h}=w_{\cali}(y)$.

Next we fix $y\in R_\cali$ and some $I\subset \cali$ (hence a cube $(y,I)$ in $R_{\cali}$).
We wish to compare the intervals
$[t_0^\circ (y+E_{I'}), t_0 (y+E_{I'})]$ for all subsets $I'\subset I$. We claim that they have at least one
common element (in fact, it turns out that $t_0(y)$ works).

Note that $\hh(y+tE_{v_0})=\hh(y+(t+1)E_{v_0})$ implies $\hh(y+tE_{v_0}+E_{I'})=\hh(y+(t+1)E_{v_0}+E_{I'})$
for any $I'$,
hence $t_0(y)\leq t_0(y+E_{I'})$. In particular, we need to prove that $t_0(y)\geq t_0^\circ (y+E_{I'})$.
Similarly as above, the value $t_0^\circ(y+E_{I'})$  is characterized by the existence of a form
\begin{equation*}
s_{I'} \in H^0(\tX\setminus E, \Omega_{\tX}^2(r_h)) \ \ \mbox{with} \ \
({\rm div}_E s_{I'}) |_\cali\geq - y-E_{I'}, \ \ \  ({\rm div}_E s_{I'} )_{v_0}=-t^{\circ}_0(y+E_{I'}).
\end{equation*}
Hence the  from $\omega_{I'}=s_1s_{I'}\in H^0(\tX\setminus E, \Omega_{\tX}^2)$ satisfies
${\rm div}_E\omega_{I'} |_\cali\geq -E_{I'}$ and $({\rm div}_E\omega_{I'} )_{v_0}=t_0(y)-t^{\circ}_0(y+E_{I'})$.
By the B$_{an}$ property we must have $t_0(y)-t^{\circ}_0(y+E_{I'})\geq 0$.

Set $S_{\cJ,n}$ and $S_{\cali,n}$ for the lattice spaces defined by $w_\cJ$ and $w_\cali$. If
$y+tE_{v_0}\in S_{\cJ,n}$ then $w_\cJ(y+tE_{v_0})\leq n$, hence  by the above discussion $w_\cali(y)\leq n$ too.
In particular, the projection $\pi_{\R}:R_\cJ\to R_\cali$ induces a map $S_{\cJ,n}\to S_{\cali,n}$.
We claim  that it is a homotopy equivalence. The argument is similar to the proof from
\ref{th:annlattinda} via the above preparations.
\end{proof}

\end{document}